\theoremstyle{plain}
\newtheorem{prop}{Proposition}[section]
\newtheorem{lemma}[prop]{Lemma}
\newtheorem{thm}[prop]{Theorem}
\newtheorem*{MainThm}{Main Theorem}
\newtheorem{cor}[prop]{Corollary}
\theoremstyle{remark}
\newtheorem{rmk}[prop]{Remark}
\theoremstyle{definition}
\newtheorem{defn}[prop]{Definition}
\newtheorem{ex}[prop]{Example}
\renewcommand{\AA}{\mathbb{A}}
\newcommand{\CC}{\mathbb{C}}
\newcommand{\FF}{\mathbb{F}}
\newcommand{\GG}{\mathbb{G}}
\newcommand{\NN}{\mathbb{N}}
\newcommand{\PP}{\mathbb{P}}
\newcommand{\QQ}{\mathbb{Q}}
\newcommand{\RR}{\mathbb{R}}
\newcommand{\ZZ}{\mathbb{Z}}
\newcommand{\MGL}{\mathsf{MGL}}
\newcommand{\BP}{\mathsf{BP}}
\newcommand{\BPn}[1]{\mathsf{BP}\langle {#1} \rangle}
\newcommand{\KGL}{\mathsf{KGL}}
\newcommand{\SH}{\mathsf{SH}}
\newcommand{\EEE}{\mathsf{E}}
\newcommand{\FFF}{\mathsf{F}}
\newcommand{\GGG}{\mathsf{G}}
\newcommand{\HHPP}{\mathbb{HP}}
\newcommand{\M}{\mathsf{M}}
\newcommand{\HHH}{\mathsf{H}}
\newcommand{\one}{\mathsf{S}}
\newcommand{\smsh}{\wedge}
\newcommand{\Ext}{\operatorname{Ext}}
\newcommand{\Spec}{\operatorname{Spec}}
\newcommand{\MZ}{\M\ZZ}
\newcommand{\cd}{\operatorname{cd}}
\renewcommand{\top}{{\operatorname{top}}}
\newcommand{\eff}{{\operatorname{eff}}}
\newcommand{\chr}{\operatorname{char}}
\newcommand{\Tor}{\operatorname{Tor}}
\newcommand{\KO}{\mathsf{KO}}
\newcommand{\Sq}{\operatorname{Sq}}
\newcommand{\red}{\operatorname{red}}
\newcommand{\hp}{\mathsf{hp}}
\newcommand{\TOP}{\mathsf{top}}
\newcommand{\uhom}{\underline{\mathrm{Hom}}}
\newcommand{\op}{\mathsf{op}}
\newcommand{\comp}[1]{^{\smsh}_{#1}}
\begin{document}

\title{Stable motivic $\pi_1$ of low-dimensional fields}

\author{Kyle M.~Ormsby}
\address{Department of Mathematics, MIT, USA}
\email{ormsby@math.mit.edu}

\author{Paul Arne {\O}stv{\ae}r}
\address{Department of Mathematics, University of Oslo, Norway}
\email{paularne@math.uio.no}

\subjclass[2010]{55T15 (primary), 19E15 (secondary)}
\keywords{Stable motivic homotopy theory, motivic Adams-Novikov
  spectral sequence, arithmetic fracture, Milnor $K$-theory, Hermitian
$K$-theory}

\begin{abstract}
Let $k$ be a field with cohomological dimension less than $3$; 
we call such fields \emph{low-dimensional}.  
Examples include algebraically closed fields, finite fields and function fields thereof, 
local fields, 
and number fields with no real embeddings.
We determine the $1$-column of the motivic Adams-Novikov spectral sequence over $k$.
Combined with rational information we use this to compute $\pi_1\one$, 
the first stable motivic homotopy group of the sphere spectrum over $k$.  
Our main result affirms Morel's $\pi_1$-conjecture in the case of low-dimensional fields.
We also determine $\pi_{1+n\alpha}\one$ for weights $n\in \ZZ\smallsetminus\{-2,-3,-4\}$.  
\end{abstract}
\maketitle

\section{Introduction} \label{sec:intro}

The stable motivic homotopy groups of the sphere spectrum over a field form an
interesting and computationally challenging class of invariants.  
They are at least as difficult to compute as the stable homotopy groups of the topological 
sphere spectrum (cf.~\cite{FullFaithful} for a precise sense in which this is true),
but also incorporate a great deal of nuanced arithmetic information
about the base field.  These groups were first explored by 
Morel \cite{MorelMASS,Morelpi0}, who computed the 0-th group as the Grothendieck-Witt ring of
quadratic forms.  Work of Dugger-Isaksen \cite{DI} and Hu-Kriz-Ormsby \cite{HKO} 
produced hands-on computations in a range of dimensions over
algebraically closed fields, but results encompassing a richer class
of fields remained elusive.  

Given a field $k$, let $p$ denote the exponential characteristic
of $k$ ($p=1$ if $\chr k = 0$; otherwise $p=\chr k$), and let 
$\one[1/p]$ denote the motivic sphere spectrum over $k$ with $p$
inverted,
\[
  \one[1/p] = \operatorname{hocolim}(\one\xrightarrow{p}\one\xrightarrow{p}\cdots).
\]
In particular, if $p=1$ then $\one[1/p] = \one$.  In this paper we determine the first
stable motivic homotopy group of $\one[1/p]$, $\pi_1 \one[1/p]$, over
any field
$k$ with cohomological
dimension less than $3$.  In fact, when $k$ satisfies these hypotheses
and $p\ne 2,3$, we prove a variant of Morel's conjecture on $\pi_1
\one$ stating that there is a short exact sequence
\[
0
\to 
K^M_2(k)/24 
\to 
\pi_1 \one[1/p] 
\to 
K^M_1(k)/2 \oplus \ZZ/2
\to 
0.
\]
Here, 
$K^M_*(k)$ denotes Milnor $K$-theory, 
defined by generators and relations in \cite{MilnorK}.
The short exact sequence for $\pi_1 \one[1/p]$ is a stable version of \cite[Conjecture 7]{Asok2}.

Our cohomological dimension assumption holds for many examples of interest, 
e.g., 
algebraically closed fields, finite fields, local fields, and number fields with no real embeddings \cite{NSW}.
The factor of $24$ is related to the fact that $\pi_3$ of the topological sphere spectrum is cyclic of that order, 
and also to the computation of unstable $\pi_{3}(\AA^3\smallsetminus 0)$ by Asok-Fasel in their work on 
splittings of vector bundles \cite{AF2}.

To put our result in context,
recall that Morel-Voevodsky \cite{MorelVoevodsky} constructed a homotopy theory of smooth $k$-schemes 
in which the affine line, 
$\AA^1$, 
plays the r\^{o}le of the unit interval; 
this is called motivic homotopy theory.  
Whenever we have a homotopy theory of a particular type of spaces, 
we may invert the smash product with a space $T$ by forming a category of $T$-spectra.  
In motivic homotopy theory, 
inverting $T=\PP^1$ corresponds to inverting the Lefschetz motive, 
and the resulting homotopy theory of $\PP^1$-spectra is called stable
motivic homotopy theory.  There are several Quillen equivalent model category
structures on $\PP^1$-spectra giving the same stable homotopy theory.
For concreteness and convenience, we will work with
Bousfield-Friedlander $\PP^1$-spectra as in \cite[Theorem 2.9]{MSS}.

In order to detect weak equivalences of $\PP^1$-spectra, one applies
the \emph{homotopy presheaf functor}, $\underline{\pi}_\star$.  
Here $\star$ stands for all indices of the form $(m,n)\in \ZZ^2$.
Following a grading convention inspired by $\ZZ/2$-equivariant
homotopy theory, we write $m+n\alpha$ for $(m,n)$, and let
$S^{m+n\alpha} = (S^1)^{\smsh m} \smsh (\GG_m)^{\smsh n}$.  
With this definition we can identify $\PP^1$ with the
$(1+\alpha)$-sphere in the motivic homotopy category.
Then for a $\PP^1$-spectrum $\EEE$, $\underline{\pi}_{m+n\alpha}(\EEE)$ 
is the presheaf taking a smooth $k$-scheme $X$ to
\[
(\underline{\pi}_{m+n\alpha}\EEE)(X) 
= 
[S^{m+n\alpha}\smsh X_+,\EEE],
\]
where $X_+$ denotes $X$ with a disjoint basepoint and $[-,-]$ denotes
stable motivic homotopy classes of maps.
For cellular motivic spectra \cite{DICell}, 
built from $S^1$ and $\GG_m$ by homotopy pushouts, 
weak equivalences are detected by actual homotopy groups.  
That is, 
we can simply compute  
\[
\pi_{m+n\alpha}\EEE 
= 
(\underline{\pi}_{m+n\alpha}\EEE)(\Spec k).
\]
Thus in order to understand stable motivic homotopy theory, 
it is essential to compute stable motivic homotopy groups.
The ring $\pi_{\star}\one$ is particularly interesting due to the distinguished r\^{o}le of the sphere spectrum.

Expressed in the notation just introduced, 
Morel computed 
\[
\pi_{n\alpha} \one 
= 
K^{MW}_{-n}(k),
\]
where $K^{MW}_*(k)$ is the so-called Milnor-Witt $K$-theory of $k$, 
a graded ring generated as an algebra by symbols $[u]$, 
$u\in k^\times$, 
in degree $1$ and $\eta$ in degree $-1$ subject to the relations
\[
\begin{aligned}
  {}[u][1-u] &= 0, \\
  [uv] &= [u] + [v] + [u][v]\eta,\\
  [u]\eta &= \eta[u], \\
  (2+[-1]\eta)\eta &= 0.
\end{aligned}
\]
This result was first proved for perfect fields of characteristic
unequal to $2$ in \cite[Theorem 6.2.1 and 6.2.2]{Morelpi0}, but in
fact holds for all fields by \cite[Corollary 6.43]{MorelA1} and a base
change argument ala \cite[Appendix C]{AyoubEtale}.  
As a particular case of Morel's theorem, we note that 
\[
  \pi_0\one = GW(k)
\]
where $GW(k)$ is the Grothendieck-Witt ring of symmetric bilinear forms over $k$.  

Note that $\underline{\pi}_{m+n\alpha} \one = 0$ for $m<0$ by Morel's $\AA^1$-connectivity theorem \cite{Morelstableconnectivity}.
Thus the next sensible class of stable motivic homotopy groups to compute is $\pi_{1+*\alpha} \one$.  
It is this task which we undertake in this paper.

It will be convenient to have a short moniker for the class of fields considered in this paper.  
We propose the term \emph{low-dimensional},
though we do not insist on this terminology being used outside these
pages.  We refer the reader to \cite[I \S3 \& II \S2]{Serre} for basic facts
about cohomological dimension.

\begin{defn} \label{defn:lowdim}
A field $k$ is called $\ell$-\emph{low-dimensional} if $\cd_\ell k \le 2$ and $\ell\ne p$.
A field $k$ is \emph{low-dimensional} if $\cd k\le 2$.
\end{defn}

Recall \cite[II \S2.2 Proposition 3]{Serre} that $cd_p k\le 1$
whenever $k$ has positive characteristic $p$, so $k$ is
low-dimensional if and only if $k$ is $\ell$-low-dimensional for all
primes $\ell\ne p$.

Our computations intertwine motivic and topological Hopf maps in nontrivial ways.
Recall the stable motivic Hopf map $\eta$ is represented unstably by the Hopf fibration $\AA^{2}\smallsetminus 0\to\PP^{1}$,
$(x,y)\mapsto [x,y]$.
Its cone is isomorphic to $\PP^{2}$.
The element $\eta\in\pi_{\alpha} \one$ is nontrivial since $\MZ/2^\star(\PP^{2})\cong\MZ/2^\star[t]/(t^3)$ and $\Sq^2(t)=t^2$ on the 
generator $t$ of mod $2$ motivic cohomology in bidegree $1+\alpha$. 
A similar argument with $\Sq^4$ and the quaternionic projective space $\HHPP^{2}$ shows the quaternionic motivic Hopf map 
$\nu\in\pi_{1+2\alpha} \one$ is nontrivial,
cf.~\cite[Theorem 4.17]{AF2} and \cite[Remark 4.14]{DDDI}. 
Finally, 
by triangulating the topological Hopf map $S^3\to S^2$ and applying
the constant presheaf functor, we get the simplicial Hopf map $\eta_s$
in motivic spectra.  
We are now ready to state our main theorem, which determines $\pi_1 \one[1/p]$ as a group.

\begin{MainThm} \label{thm:main}
For a low-dimensional field $k$ with $p\ne 2,3$, 
there is a short exact sequence of abelian groups
\[
  0\to K^M_2(k)/24 \to \pi_1 \one[1/p] \to K^M_1(k)/2 \oplus \ZZ/2\to 0.
\]
If we consider $\pi_{1+*\alpha}\one[1/p]$ as a $\pi_{*\alpha}\one[1/p]
\cong K^{MW}_{-*}(k)[1/p]$-module, then $K^M_2(k)/24 \subset
\pi_1\one[1/p]$ consists of $K^{MW}_2(k)\cong \pi_{-2\alpha}\one$-multiples of
$\nu$.  Moreover, the $K^{MW}_0(k)\cong GW(k)$-multiples of $\eta_s$ map onto
$K^M_1(k)/2 \oplus \ZZ/2$ via $\langle u\rangle \eta_s\mapsto ([u],1)$. 

As a short exact sequence of abelian groups, the $\ZZ/2\{\eta_s\}$-summand splits, 
while for $[u],[v]\in K^M_1(k)/2$ we have
\[
  [u]\eta\eta_s+[v]\eta\eta_s = [uv]\eta\eta_s - 12[u,v]\nu.
\]
\end{MainThm}

In fact, 
we prove more in Theorem \ref{thm:pi1}, 
which addresses the $p=2,3$ cases as well.
For all $p$ our computations are in agreement with Morel's conjecture.
Moreover, 
in \S \ref{sec:nonzero} we produce a brief outline of computations of
the $1$-line $\pi_{1+n\alpha} \one[1/p]$ for $n\in\ZZ\smallsetminus
\{-2,-3,-4\}$.  
When $k$ is a finite field we compute $\pi_{1+n\alpha} \one[1/p] $ in all weights.

\begin{ex}
We illustrate the main theorem with several specific cases.

\begin{itemize}
\item
If $k$ is algebraically closed with $p\ne 2$, 
then $\pi_1 \one[1/p]\cong\ZZ/2\{\eta_s\}$ is the first topological stable stem.
This extends the $\pi_1$ case of rigidity for stable motivic homotopy
to fields of positive characteristics \cite{FullFaithful}.
\item
If $k$ is a finite field with $p\ne 2$,
then $\pi_1 \one[1/p]\cong\ZZ/2\oplus\ZZ/2$.
\item
If $F$ is a finite field and $k=F(T)$ with $p\ne 2,3$, 
there is an exact sequence
\[
0
\to 
\bigoplus_{\mathfrak{p}} (F[T]/\frak{p})^{\times}\otimes\ZZ/24  
\to 
\pi_1 \one[1/p] 
\to 
F(T)^{\times}/2 \oplus \ZZ/2\to 0,
\]
where $\frak{p}$ runs through the maximal ideals of the affine line $F[T]$.
\item
Suppose $k$ is a non-archimedean local field and $n$ is the number of roots of unity contained in $k$. 
The Hilbert symbol of order $n$ defines a surjection $K_{2}(k)\to\mu_{n}$ with uniquely divisible kernel $nK_{2}(k)$ \cite{Merkurjev}.
Let $q$ denote the order of the residue field of $k$.
If $p\neq 2,3$ is positive, 
the first $K$-group $K^M_{1}(k)=k^{\times}=\ZZ\oplus\ZZ/(q-1)\oplus (\ZZ_{p})^{\oplus\NN}$ and there is a short exact sequence
\[
0
\to 
\ZZ/(n,24)
\to 
\pi_1 \one [1/p]
\to  
\ZZ/2\oplus\ZZ/(q-1,2)\oplus\ZZ/2
\to 
0.
\]
If $p=1$, 
$K^M_{1}(k)=k^{\times}=\ZZ\oplus\ZZ/(q-1)\oplus\ZZ/\ell^{a}\oplus (\ZZ_{\ell})^{d}$ where $d=[k:\QQ_{\ell}]$, 
$a\geq 0$ is defined by $\mu_{\ell^{\infty}}(k)=\mu_{\ell^{a}}$, 
and there is a short exact sequence
\[
0
\to 
\ZZ/(n,24)
\to 
\pi_1 \one 
\to  
\ZZ/2\oplus\ZZ/(q-1,2)\oplus\ZZ/(\ell^{a},2)\oplus (\ZZ_{\ell}/2)^{d}\oplus\ZZ/2
\to 
0.
\]
\item
If $k$ is a nonreal number field, 
e.g.,
the Gaussian numbers $\QQ(\sqrt{-1})$,
there is an exact sequence
\[
0
\to 
K^M_2(k)/24
\to 
\pi_1 \one 
\to 
k^{\times}/2 \oplus  \ZZ/2
\to 0.
\]
By Dirichlet's unit theorem, 
the group of square classes of units in $k$ is an infinite dimensional $\ZZ/2$-vector space.
Moreover, 
there is an exact sequence 
\[
0
\to 
K_2(\mathcal{O}_k)
\to
K^M_2(k)
\to
\bigoplus_{\mathfrak{p}}  (\mathcal{O}_k/\frak{p})^{\times}
\to 
0,
\]
where $\frak{p}$ runs through the maximal ideals of the rings of integers $\mathcal{O}_k$.
Thus $\pi_1 \one$ is not a finitely generated abelian group.
\end{itemize}
\end{ex}

\subsection{Relation to other work}
The groups $\pi_{1-n\alpha}\one$ are related to the unstable motivic homotopy groups of punctured affine space $\pi_n(\AA^n\smallsetminus 0)$.  
These latter groups (or, in fact, the Nisnevich sheafifications $\underline{\pi}_n^{\AA^1}(\AA^n\smallsetminus 0)$ of the associated presheaves) 
are of central interest in the Asok-Fasel program \cite{Asok2,AF3} studying splitting of vector bundles.  
Indeed, 
there is a direct system given by $\PP^1$-suspension so that
\[
\underline{\pi}_{1-n\alpha}\one 
\cong 
\operatorname{colim}_m \underline{\pi}^{\AA^1}_{n+m(1+\alpha)} (\PP^1)^{\smsh m} \smsh \AA^n\smallsetminus 0.
\]
This isomorphism follows from \cite[p.~487]{MSS} and the identification
$\AA^n\smallsetminus 0 
\simeq 
S^{n-1+n\alpha}$.
Thus a computation of $\underline{\pi}_{1-n\alpha}\one$ is a computation of the ``stable part'' of Asok-Fasel's obstruction group
$\underline{\pi}_n^{\AA^1}(\AA^n\smallsetminus 0)$.

Unfortunately, it is difficult to make the above statement any more precise.  
In motivic homotopy theory, we lack a $\PP^1$-Freudenthal suspension theorem, 
i.e., 
we lack a theorem giving us a stable range $m\ge N$ for which $\pi_{n+m(1+\alpha)}(\PP^1)^{\smsh m}\smsh \AA^n\smallsetminus 0$ 
is isomorphic to $\underline{\pi}_{1-n\alpha}\one$.  
Producing such a
theorem should be viewed as a significant goal for motivic homotopy
theorists.  It is possible that there would be immediate dividends
in terms of translating the determination of Asok-Fasel's obstruction groups into stable
(and hence more accessible) computations.

Our work is also closely related to (and uses --- see the proof of Proposition
\ref{prop:4nu}) Dugger and Isaksen's recent work on relations between motivic
Hopf elements \cite{DDDI}.  For low-dimensional fields, we verify the
speculated relation $12\nu = \eta^2\eta_s$ from \S 1.7 of \emph{loc. cit.}

\subsection{Outline of the argument}

Our computations proceed one prime at a time via the motivic Adams-Novikov spectral
sequence (MANSS).  In order to use this machinery, we have to
understand certain features of algebraic cobordism and the motivic
Brown-Peterson spectrum over low-dimensional fields.  In
\S\ref{sec:slice}, we use the slice spectral sequence to determine the
coefficient ring $\pi_\star \BP$ and cooperations $\BP_\star\BP$ for
$\BP$ over a low-dimensional field.  In \S\ref{sec:E2}, we use the
special form that the Hopf algebroid $(\pi_\star\BP,\BP_\star\BP)$
takes over low-dimensional fields to determine the $E_2$ page of the
MANSS.
We outline how to use the $\ell$-primary MANSS to compute $\pi_1\one\comp{\ell}$ in the beginning of \S\ref{sec:largePrimePi1},
and then go on to run these computations for $\ell>2$.  
Here $\one\comp{\ell}$ is the $\ell$-completion of the motivic sphere spectrum.  
The crucial $2$-primary computation is contained in \S\ref{sec:2pi1}.

In order to glue our $\ell$-complete computations into an integral
computation, we collect some information about rational motivic
homotopy groups in
\S\ref{sec:rationalPi1}.  That allows us to complete the proof of our 
main theorem in \S\ref{sec:pi1} via arithmetic fracture methods which
we review in Appendix A.  
In \S\ref{sec:nonzero}, we recapitulate our methods for
nonzero weights, completely determining $\pi_{1+n\alpha}\one[1/p]$ for
$n\in \ZZ\smallsetminus \{-2,-3,-4\}$ over low-dimensional fields.  We
compute $\pi_{1+n\alpha}\prod_{\ell\ne p}\one\comp{\ell}$ for all
$n\in \ZZ$ over low-dimensional fields, but some subtle
issues about rational motivic homology groups prevent us from
producing a full answer.

\subsection{Some history}

Many of the ideas in this paper date back to the first author's thesis
\cite{OThesis} but with the extra limitation that the base field $k$
be a $p$-adic field.  
In 2010, the second author adapted these techniques 
to finite fields and local fields of positive
characteristic.  In Fall 2012, 
Aravind Asok gave a talk at Harvard in which he highlighted Morel's
conjecture on $\pi_1 \one$.  This reinvigorated the authors' interest in
the project since it became apparent that machinery developed to
study infinite families in the motivic stable stems could also produce
explicit computations in low degrees.  It was at this point that the authors
sought to generalize their techniques as much as possible and settled
on the low-dimensional fields of Definition \ref{defn:lowdim}.

\subsection{Notation and computational input}

Before continuing, we pause here and
record the notation and computational input used in this paper.  While
some of the following terms have already been defined, it is our hope
that recording everything in one place will assist the reader.

Throughout the subsequent pages,
\begin{itemize}
\item $k$ is a field of exponential characteristic $p$,
\item $\one$ is the motivic sphere spectrum,
\item $S^{m+n\alpha} = (S^1)^{\smsh m}\smsh (\AA^1\smallsetminus
  0)^{\smsh n}$,
\item $\pi_{m+n\alpha}(-) = [S^{m+n\alpha},-]$ where $[-,-]$ denotes
    the hom set in the stable motivic homotopy category over $k$,
\item $\pi_\star = \bigoplus_{m,n\in \ZZ} \pi_{m+n\alpha}$ and
  $\pi_{m+*\alpha} = \bigoplus_{n\in \ZZ} \pi_{m+n\alpha}$,
\item for any motivic spectrum $\EEE$, $\EEE[1/p] = \operatorname{hocolim}
  (\EEE\xrightarrow{p}\EEE\xrightarrow{p}\cdots)$ and $\EEE\comp{\ell}$
  is the $\ell$-completion of $\EEE$,
\item $\M A$ denotes the motivic Eilenberg-MacLane spectrum with
  coefficients in an abelian group $A$,
\item $\KGL$ is the motivic algebraic $K$-theory spectrum,
\item $\KO$ is the motivic Hermitian $K$-theory spectrum,
\item $\MGL$ is the motivic algebraic cobordism spectrum,
\item $\BP$ is the motivic Brown-Peterson spectrum at a prime $\ell$,
  or, in \S 3 and later, the $\ell$-completion of this spectrum,
\item  $K^{MW}_*(k)$ is the $\ZZ$-graded Milnor-Witt $K$-theory ring
  of $k$,
\item $K^M_*(k)$ is the $\NN$-graded Milnor $K$-theory ring of $k$,
\item $\eta$ is the stable motivic Hopf map induced by the projection 
  $S^{1+2\alpha}\simeq \AA^2\smallsetminus 0\to \PP^1$,
\item $\eta_s$ is the stable simplicial Hopf map induced by applying
  the constant presheaf functor to the topological Hopf map,
\item $\nu$ is the stable motivic map induced by applying
  the Hopf construction to the split quaternions, and
\item $\rho$ is the class of $-1$ in $K^{MW}_1(k)\cong
  \pi_{-\alpha}\one$ or the class of $-1$ in $K^M_1(k)\cong
  k^\times\cong \pi_{-\alpha}\M\FF_2$ (the meaning will be clear from
  context).
\end{itemize}

Since they will be important later, we also recall the definitions of
$K^{MW}_*(k)$ and $K^M_*(k)$.  Let $FA(k^\times,\eta)$ denote the free
associative $\ZZ$-graded algebra on symbols $[u]$ in degree $1$ for
$u\in k^\times$ and $\eta$ in degree $-1$.  Let $R^{MW}(k)$ denote the ideal
in $FA(k^\times,\eta)$ generated by $[u][1-u]$ for $u\in
k^\times\smallsetminus 1$, $[uv]-[u]-[v]-[u][v]\eta$ for $u,v\in
k^\times$, $[u]\eta-\eta[u]$ for $u\in k^\times$, and
$(2+\rho\eta)\eta$.  Then
\[
  K^{MW}_*(k) = FA(k^\times,\eta)/R^{MW}(k).
\]

Let $T(k^\times)$ denote the tensor algebra on symbols $[u]$ in degree
$1$ for $u\in
k^\times$.  Let $R^{M}(k)$ denote the ideal in $T(k^\times)$ generated
by $[u][1-u]$ for $u\in k^\times\smallsetminus 1$ and $[uv] - [u] -
[v]$ for $u,v\in k^\times$.  Then
\[
  K^M_*(k) = T(k^\times)/R^M(k)\cong K^{MW}_*(k)/(\eta).
\]

Finally, some arguments in \S\ref{sec:2pi1} will require manipulation
of the coefficient ring $\pi_\star \M\FF_2$ of the mod 2 motivic
Eilenberg-MacLane spectrum $\M\FF_2$.  By the Milnor conjecture, this
ring is isomorphic to $(K^M_*(k)/2)[\tau]$ where $|K^M_n(k)/2| =
-n\alpha$ and $\tau$ is the generator of $\pi_{1-\alpha}\M\FF_2\cong
H^0(k;\mu_2)\cong \{\pm 1\}$.

\subsection{Acknowledgments}
The authors thank Aravind Asok, Mark Behrens, Mike Hopkins, 
Max Karoubi, and Markus Spitzweck for helpful conversations.  The also
thank the anonymous referees for incisive comments and suggestions.

\section{The slice spectral sequence for $\MGL$ and $\BPn{n}$} 
\label{sec:slice}

The slice tower is Voevodsky's substitute for the Postnikov tower in
stable motivic homotopy theory.  We refer to \cite[\S 2]{VOpen} for the rudiments
of its construction, and only briefly recall that, starting with the effective
stable motivic homotopy category $\SH(k)^\eff$, one constructs the
categories $\Sigma_{\PP^1}^q\SH(k)^\eff$ and the adjoint functor pairs
\[
  i_q:\Sigma_{\PP^1}^q\SH(k)^\eff\leftrightarrows \SH(k):r_q
\]
for $q\in \ZZ$.  The $q$-th stage of the slice tower for a motivic spectrum $\EEE$ is
then $f_q\EEE = i_qr_q\EEE$.  By construction these assemble into a tower
\[
\xymatrix{
  \vdots\ar[d] &\\
  f_{q+1}\EEE\ar[d]\ar[r] &s_{q+1}\EEE\\
  f_q\EEE\ar[d]\ar[r] &s_q\EEE\\
  \vdots
}\]
in which $s_q\EEE$, the cofiber of $f_{q+1}\EEE\to f_q\EEE$, is the
$q$-th slice of $\EEE$.

The slice tower for $\EEE$ has an associated tri-graded spectral sequence with
$E_1$-page
\[
  E_1^{q,m+n\alpha} = \pi_{m+n\alpha} s_q\EEE
\]
and differentials
\[
  d_r:E_r^{q,m+n\alpha}\to E_r^{q+r,m-1+n\alpha},
\]
which suggests drawing an ``Adams graded'' picture of the slice
spectral sequence with $q$ on the vertical axis and $m+n\alpha$ on the
horizontal axis (for a fixed weight $n$).

We will freely use the language of spectral sequences in this and the
following sections.  By an infinite cycle, we mean a class $x$ such
that $d_r(x) = 0$ for all $r$.  A permanent cycle is an infinite cycle
which is not a boundary (and hence survives to represent a nonzero
class in $E_\infty$).

In good situations, the $E_1$-page of the slice spectral sequence for
$\EEE$ is computable and the slice spectral sequence converges
to $\pi_\star \EEE$.  This is exactly the case for algebraic
cobordism with $p$ inverted, $\MGL[1/p]$, and the motivic Brown-Peterson spectra $\BP$ at a
prime $\ell\ne p$, where $p$ is the exponential characteristic of $k$.
(For the sake of expediency, we can define $\BP$ as a motivic
Landweber exact spectrum.)  
By the Hopkins-Morel-Hoyois theorem \cite[Theorem 7.12]{Hoyois} and
M.~Spitzweck's relative version of
Voevodksy's conjecture on the slices of $\MGL$ \cite[Theorem 4.7]{SpitzweckRelations}, we have
\[
  s_*\MGL[1/p] = \M\ZZ[1/p][x_1,x_2,\ldots]
\]
where $\M\ZZ[1/p]$ is Voevodsky's integral motivic cohomology spectrum with the
exponential characteristic inverted, and $x_i$ has
dimension $i(1+\alpha)$ and slice degree $i$.  This 
means that $s_q\MGL[1/p]$ is a wedge of $q(1+\alpha)$-suspensions of
$\M\ZZ[1/p]$ indexed by
monomials in the $x_i$ of slice degree $q$. 
Similarly, when $\ell\ne p$ we have
\[
  s_*\BP = \M\ZZ_{(\ell)}[v_1,v_2,\ldots]
\]
where $v_i = x_{\ell^i-1}$.
Convergence of the slice spectral sequences for $\MGL[1/p]$ and $\BP$
(for $\ell\ne p$)
is a special case of \cite[Theorem 8.12]{Hoyois}.

\begin{defn} \label{defn:SliceDiagonal}
A suspension $\Sigma^{m+n\alpha}$ is \emph{diagonal} if $m=n$.  A
spectrum $\EEE$ is \emph{slice diagonal} at $\ell$ if its slices are
wedges of diagonal suspensions of $\MZ_\ell$ (in which case $s_q\EEE$
is a potentially empty wedge of $q(1+\alpha)$-suspensions of $\MZ_\ell$).
\end{defn}

\begin{rmk} \label{rmk:SliceDiagonal}
The $\ell$-complete algebraic cobordism spectrum, the
$\ell$-complete motivic
Brown-Peterson spectrum $\BP$, and the $\ell$-complete truncated
Brown-Peterson spectra $\BPn{n}$ are all slice diagonal at $\ell$.
\end{rmk}

In order to manipulate the slice spectral sequence of slice diagonal
spectra over low-dimensional fields, we will need the following lemma
which records the known vanishing range for $\pi_\star\MZ_\ell$.

\begin{lemma}\label{lemma:vanish}
If $\cd_\ell(k)$ is the $\ell$-torsion cohomological dimension of
$k$, then $\pi_{m+n\alpha}\MZ_\ell = 0$ whenever $m<0$, $m>-n$, or
$m<-n-\cd_\ell(k)$.
\end{lemma}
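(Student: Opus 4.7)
The plan is to translate the claim entirely into a statement about the (well-studied) motivic cohomology of the point and then read off the three vanishing ranges from standard results.

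First I would set up the indexing dictionary. Using $S^{m+n\alpha} = (S^1)^{\smsh m}\smsh \GG_m^{\smsh n}$ and the familiar relation $S^{m+n\alpha}\simeq S^{m+n,n}$ in Voevodsky's bigraded notation, the representability of motivic cohomology by $\MZ$ yields
\[
\pi_{m+n\alpha}\MZ_\ell \;\cong\; H^{-m-n,\,-n}(k;\ZZ_\ell).
\]
Setting $p=-m-n$ and $q=-n$, the three conditions in the lemma become, respectively, $p>q$, $p<0$, and $p>\cd_\ell(k)$.

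Next I would invoke the standard vanishing for motivic cohomology of a field: $H^{p,q}(k;A)=0$ whenever $p>q$ or $p<0$ or $q<0$. The first two directly yield the conditions $m<0$ and $m>-n$. The third ($q<0$, i.e.\ $n>0$) is automatically subsumed, since if $n>0$ then either $m<0$ or $m\ge 0>-n$, so one of the first two conditions already applies. These vanishings pass to $\ZZ_\ell$-coefficients either by the universal coefficient/short exact sequence with finite coefficients or by noting that the vanishing is uniform in the coefficient group.

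For the cohomological dimension bound, I would argue in the remaining range $0\le p\le q$ using the Beilinson–Lichtenbaum theorem (Bloch–Kato, after Voevodsky–Rost). It identifies
\[
H^{p,q}(k;\ZZ/\ell^r) \;\cong\; H^{p}_{\mathrm{\acute et}}(k;\mu_{\ell^r}^{\otimes q})
\]
for $0\le p\le q$, and the right-hand side vanishes for $p>\cd_\ell(k)$ by definition of $\ell$-cohomological dimension. A Milnor $\lim/\!\lim^1$ sequence then passes this vanishing from mod-$\ell^r$ coefficients to $\ZZ_\ell$-coefficients, and translating back via $p=-m-n$ yields the last clause $m<-n-\cd_\ell(k)$.

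The main obstacle is really just bookkeeping: one must carefully check that the Bloch–Kato identification has the correct form needed to access the étale cohomological dimension and that the $\lim^1$ term vanishes when completing at $\ell$. Apart from that, every ingredient is off-the-shelf and the proof amounts to the indexing translation described above.
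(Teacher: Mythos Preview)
Your proposal is correct and follows essentially the same approach as the paper. The paper's own proof is terser: it simply records the identification
\[
\pi_{m+n\alpha}\MZ_\ell \cong
\begin{cases}
H^{-m-n}_{\text{\'et}}(k;\ZZ_\ell(-n)) & m\ge 0,\\
0 & m<0,
\end{cases}
\]
as a consequence of Bloch--Kato and then reads off the remaining two vanishing conditions from \'etale cohomology (negative degree and degree above $\cd_\ell(k)$), whereas you separate out the $p>q$ and $p<0$ cases via motivic-cohomology vanishing before invoking Beilinson--Lichtenbaum; the substance is the same, and your added care about the $\varprojlim{}^1$ passage to $\ZZ_\ell$-coefficients is a welcome expansion of what the paper leaves implicit.
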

\begin{proof}
The vanishing range for $\pi_\star \MZ_\ell$ follows from the
Bloch-Kato conjecture along with the definition of cohomological
dimension.  In particular, we have
\[
  \pi_{m+n\alpha} \MZ_\ell \cong
  \begin{cases}
    H^{-m-n}_{\text{\'{e}t}}(k;\ZZ_\ell(-n)) &\text{if }m\ge 0\\
    0&\text{otherwise}.
  \end{cases}
\]
\end{proof}

\begin{thm} \label{thm:slice}
Suppose $k$ is $\ell$-low-dimensional and $\EEE $ is slice diagonal at $\ell$.  
Then the slice spectral sequence for $\EEE$ collapses at the $E_1$-page.
\end{thm}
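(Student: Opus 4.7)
The plan is to show that for any spectrum $\EEE$ slice-diagonal at $\ell$, the $E_1$-page of the slice spectral sequence sits in such a thin diagonal strip that no $d_r$ with $r\ge 1$ can connect a nonzero source to a nonzero target. The two ingredients are the pinned-down shape of each $s_q\EEE$ forced by slice-diagonality and the vanishing range of $\pi_\star\MZ_\ell$ from Lemma \ref{lemma:vanish}.

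By Definition \ref{defn:SliceDiagonal}, each slice $s_q\EEE$ is a (possibly empty) wedge of copies of $\Sigma^{q(1+\alpha)}\MZ_\ell$, so
\[
E_1^{q,m+n\alpha} \cong \bigoplus \pi_{(m-q)+(n-q)\alpha}\MZ_\ell.
\]
Applying Lemma \ref{lemma:vanish} with $\cd_\ell k\le 2$, this group is nonzero only in the strip $q\le m\le 2q-n$ with $m\ge 2q-n-\cd_\ell k$. Performing the same analysis on the target of $d_r\colon E_r^{q,m+n\alpha}\to E_r^{q+r,m-1+n\alpha}$ shows that the target is supported only for $m\ge 2q+2r-n+1-\cd_\ell k$. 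Pairing the target lower bound with the source upper bound $m\le 2q-n$ forces the inequality $2r+1\le \cd_\ell k$, which has no solution when $r\ge 1$ and $\cd_\ell k\le 2$. Hence $d_r=0$ for every $r\ge 1$, and the spectral sequence collapses at $E_1$.

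There is no serious obstacle: once the slice-diagonal hypothesis has been translated into the identification of $E_1^{q,m+n\alpha}$ and Lemma \ref{lemma:vanish} is in hand, the whole argument is a width estimate, comparing the width $\cd_\ell k$ of the support of $\pi_\star\MZ_\ell$ against the drift $2r-1$ that the differential $d_r$ imposes on that support. The one place where a careless reader might slip is in keeping straight the roles of the slice degree $q$, the ambient bidegree $(m,n)$, and the $q(1+\alpha)$-suspension offset of each slice summand.
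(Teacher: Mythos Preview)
Your proof is correct and follows essentially the same approach as the paper: both translate slice-diagonality into $E_1^{q,m+n\alpha}\cong\bigoplus\pi_{(m-q)+(n-q)\alpha}\MZ_\ell$, invoke the vanishing range of Lemma~\ref{lemma:vanish}, and conclude by the same degree count that a nonzero differential would force $2r+1\le\cd_\ell k\le 2$, which is impossible for $r\ge 1$. The paper packages the inequalities via the single quantity $2q-(m+n)$ rather than splitting into bounds on $m$, but this is only a cosmetic difference. One trivial slip: in your closing paragraph the drift is $2r+1$, not $2r-1$.
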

\begin{proof}
By Lemma \ref{lemma:vanish}, we know that
$\pi_{m+n\alpha}s_q\EEE = 0$ whenever $0 > 2q-(m+n)$ or $2q-(m+n) >
2$.  Now whenever $0\le 2q-(m+n)$ (so that
the source of a differential $d_r$ is possibly nonzero) we have
\[
  2(q+r)-(m-1+n) = (2q-(m+n)) + 2r+1 \ge 3,
\]
whence the target vanishes and $d_r = 0$.
\end{proof}

\begin{cor} \label{cor:MGL}
The coefficients of the $\ell$-complete algebraic cobordism spectrum over an
$\ell$-low-dimensional field are
\[
  \pi_\star \MGL\comp{\ell} = (\pi_\star \MZ_\ell)[x_1,x_2,\ldots]
\]
where $|x_i| = i(1+\alpha)$.
\end{cor}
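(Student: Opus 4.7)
The plan is to feed $\MGL\comp{\ell}$ into Theorem \ref{thm:slice}. By Remark \ref{rmk:SliceDiagonal}, $\MGL\comp{\ell}$ is slice diagonal at $\ell$, so over any $\ell$-low-dimensional field its slice spectral sequence collapses at $E_1$. Combined with the Hopkins--Morel--Hoyois--Spitzweck description of slices recalled above, namely $s_q\MGL\comp{\ell} = \bigvee_I \Sigma^{q(1+\alpha)}\MZ_\ell$ with $I$ indexing monomials $\prod x_i^{d_i}$ satisfying $\sum i d_i = q$, the $E_1 = E_\infty$ page in bidegree $(m,n)$ reads
\[
  \bigoplus_{(d_1,d_2,\ldots)} \pi_{(m-q) + (n-q)\alpha}\MZ_\ell, \qquad q = \sum i d_i,
\]
which is exactly the bidegree $(m,n)$ component of the polynomial ring $(\pi_\star\MZ_\ell)[x_1,x_2,\ldots]$ with $|x_i| = i(1+\alpha)$.

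To upgrade this additive match to a ring isomorphism I would use that Spitzweck's theorem provides the generators $x_i$ as honest elements of $\pi_{i(1+\alpha)}\MGL\comp{\ell}$. This yields a morphism of $\pi_\star\MZ_\ell$-algebras
\[
  \varphi\colon (\pi_\star\MZ_\ell)[x_1,x_2,\ldots] \longrightarrow \pi_\star\MGL\comp{\ell}.
\]
Filtering the source by total $x$-degree and the target by slice degree, $\varphi$ is a filtered map and --- by construction of the slice spectral sequence identification --- induces the identity on associated gradeds.

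The main obstacle is controlling possible hidden extensions, since many monomials in the $x_i$ can share a single bidegree (for instance $x_1 x_2$ and $x_3$ both sit in bidegree $3(1+\alpha)$ and slice degree $3$), and $\pi_\star\MZ_\ell$ is in general neither torsion-free nor supported in a single stem. Here Hoyois's convergence result \cite[Theorem 8.12]{Hoyois} rescues us: the slice filtration on $\pi_\star\MGL\comp{\ell}$ is complete and exhaustive in every bidegree, so the filtered map $\varphi$ being an isomorphism of associated gradeds forces $\varphi$ itself to be an isomorphism. This establishes the stated presentation of $\pi_\star\MGL\comp{\ell}$.
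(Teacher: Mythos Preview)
Your approach matches the paper's: invoke Remark \ref{rmk:SliceDiagonal} and Theorem \ref{thm:slice} to collapse the slice spectral sequence, then use Hoyois's convergence to read off $\pi_\star\MGL\comp{\ell}$. The paper treats the corollary as immediate from this and gives no further argument, so you are in fact supplying more detail than the original.

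That said, your upgrade to a ring isomorphism has a gap. You assert that there is a morphism of $\pi_\star\MZ_\ell$-algebras
\[
  \varphi\colon (\pi_\star\MZ_\ell)[x_1,x_2,\ldots]\longrightarrow \pi_\star\MGL\comp{\ell},
\]
but you never explain why the target is a $\pi_\star\MZ_\ell$-algebra in the first place. The natural map between $\MGL\comp{\ell}$ and $\MZ_\ell$ goes the \emph{wrong} way: $\MZ_\ell \simeq s_0\MGL\comp{\ell}$ receives a map from $\MGL\comp{\ell}$, not the other way around, and there is no evident ring spectrum map $\MZ_\ell\to\MGL\comp{\ell}$. (Your attribution of the $x_i$ to Spitzweck is also slightly off: the $x_i$ come from the universal orientation $L\to\pi_{*(1+\alpha)}\MGL$; Spitzweck's theorem identifies the slices.) So as written you have not constructed $\varphi$, and the subsequent filtered-map argument has no map to apply to.

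One way to repair this: use that Hopkins--Morel--Hoyois gives $f_q\MGL\comp{\ell}$ as the ``ideal'' generated by monomials of degree $\ge q$, so multiplication by $x_I$ with $|I|=q$ furnishes genuine maps $\Sigma^{q(1+\alpha)}\MGL\comp{\ell}\to f_q\MGL\comp{\ell}$; composing with the projection to $s_q\MGL\comp{\ell}=\bigvee_I\Sigma^{q(1+\alpha)}\MZ_\ell$ and summing over $I$ produces a filtered map inducing the identity on associated gradeds. Since the filtration has length at most two in each bidegree (by the proof of Theorem \ref{thm:slice}), the convergence argument then goes through.
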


\begin{cor} \label{cor:BP}
The coefficients of the $\ell$-complete motivic Brown-Peterson spectrum over
an $\ell$-low-dimensional field are
\[
  \pi_\star \BP\comp{\ell} = (\pi_\star \MZ_\ell)[v_1,v_2,\ldots]
\]
where $|v_i| = (\ell^i-1)(1+\alpha)$.
\end{cor}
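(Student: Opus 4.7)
The statement is essentially a direct consequence of the preceding theorem, so the plan is to string together ingredients that have already been assembled. The slice spectral sequence for $\BP\comp{\ell}$ over an $\ell$-low-dimensional field converges to $\pi_\star \BP\comp{\ell}$ by the $\ell$-complete version of \cite[Theorem 8.12]{Hoyois} (applied before completion and then passed through $\ell$-completion), so it suffices to identify the $E_\infty$-page and then rule out extension problems.

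First I would invoke Remark \ref{rmk:SliceDiagonal} to note that $\BP\comp{\ell}$ is slice diagonal at $\ell$, so that Theorem \ref{thm:slice} applies and the slice spectral sequence collapses at $E_1$. Next I would read off the $E_1$-page from the identification $s_\ast \BP = \MZ_{(\ell)}[v_1, v_2, \ldots]$ recorded before Remark \ref{rmk:SliceDiagonal}, noting that after $\ell$-completion this becomes $\MZ_\ell[v_1, v_2, \ldots]$ with $v_i$ in slice degree $\ell^i - 1$ and bidegree $(\ell^i-1)(1+\alpha)$. Consequently
\[
  E_1^{q,m+n\alpha} \;=\; \bigoplus_{\substack{I = (i_1,i_2,\ldots) \\ \sum i_j(\ell^j-1) = q}} \pi_{m-q + (n-q)\alpha} \MZ_\ell \cdot v^I,
\]
which as a tri-graded abelian group is $(\pi_\star \MZ_\ell)[v_1, v_2, \ldots]$ with $|v_i| = (\ell^i-1)(1+\alpha)$.

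The one nontrivial step is showing that the associated graded identification lifts to a genuine isomorphism of $\pi_\star \MZ_\ell$-algebras on $\pi_\star \BP\comp{\ell}$; that is, we must rule out additive extensions and see that the $v_i$ lift to multiplicative generators. For the additive extensions, I would argue using the sparseness estimate from Lemma \ref{lemma:vanish}: in any fixed bidegree $m+n\alpha$, the only slices $s_q \BP\comp{\ell}$ contributing are those with $0 \le 2q - (m+n) \le 2$, and the monomials $v^I$ with $|v^I| = q(1+\alpha)$ contributing to such a slice all lie in a single slice filtration once one fixes the monomial (since each $v^I$ has a fixed diagonal bidegree). Lifts of the $v_i$ to $\pi_\star \BP\comp{\ell}$ exist by the multiplicative structure of the slice tower for the ring spectrum $\BP\comp{\ell}$, and they multiply as in the $E_1$-page because the slice filtration is multiplicative.

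The main obstacle, to the extent there is one, is this last bookkeeping around extensions and the multiplicative structure; everything else is immediate from Theorem \ref{thm:slice} and the identification of the slices of $\BP$. In particular, the fact that $\pi_\star \MZ_\ell$ is concentrated in the narrow strip $0 \le m \le -n$ (and further bounded by $\cd_\ell k$) provided by Lemma \ref{lemma:vanish}, combined with the diagonal placement of the $v^I$, ensures there is no overlap between distinct slice filtration contributions in any single bidegree, so the $E_\infty$-page reads directly as the stated polynomial ring.
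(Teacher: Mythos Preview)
Your overall strategy is exactly the paper's: the corollary is meant to follow immediately from Theorem~\ref{thm:slice} (collapse of the slice spectral sequence for slice-diagonal spectra) together with the identification $s_*\BP\comp{\ell} = \MZ_\ell[v_1,v_2,\ldots]$, and the paper records no further argument.  You go beyond the paper in trying to address the additive extension problem, which is a reasonable instinct.

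However, your key claim---that ``there is no overlap between distinct slice filtration contributions in any single bidegree''---is false.  Fix $q\ge 1$ and consider the bidegree $q+(q-2)\alpha$.  Slice degree $q$ contributes copies of $\pi_{-2\alpha}\MZ_\ell\cong H^2_{\text{\'et}}(k;\ZZ_\ell(2))$ (one for each monomial $v^I$ of weight $q$), while slice degree $q-1$ contributes copies of $\pi_{1-\alpha}\MZ_\ell\cong H^0_{\text{\'et}}(k;\ZZ_\ell(1))$ (one for each monomial of weight $q-1$).  Over many $\ell$-low-dimensional fields both groups are nonzero, so two adjacent slice filtrations genuinely contribute to the same bidegree and your sparsity argument does not rule out extensions.

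A cleaner way to finish is to lift generators rather than argue bidegree by bidegree.  Each $v_i$ lies in bidegree $(\ell^i-1)(1+\alpha)$, where only slice $\ell^i-1$ contributes (the next slice would contribute $\pi_{-1-\alpha}\MZ_\ell=0$), so $v_i$ lifts uniquely to $\pi_\star\BP\comp{\ell}$.  Elements of $\pi_\star\MZ_\ell$ lift via the edge surjection $\pi_\star\BP\comp{\ell}=\pi_\star f_0\BP\comp{\ell}\twoheadrightarrow \pi_\star s_0\BP\comp{\ell}=\pi_\star\MZ_\ell$.  Multiplying these lifts defines a filtration-preserving map $(\pi_\star\MZ_\ell)[v_1,v_2,\ldots]\to\pi_\star\BP\comp{\ell}$ which is the identity on associated gradeds (using the multiplicativity of the slice filtration that you already invoked), hence an isomorphism of $\ZZ_\ell$-modules.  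This is the form actually used downstream in \S\ref{sec:E2}, where one only needs $\pi_\star\BP\comp{\ell}\cong\pi_*\BP^\top\otimes_{\ZZ_\ell}\pi_\star\MZ_\ell$.
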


\begin{rmk}
Under the above hypotheses, similar result holds for the truncated
motivic Brown-Peterson spectra
\[
  \BPn{n} = \BP/(v_{n+1},v_{n+2},\ldots),
\]
namely that
\[
  \pi_\star \BPn{n}\comp{\ell} = (\pi_\star \MZ_\ell)[v_1,\ldots,v_n].
\]
\end{rmk}

\begin{rmk}
There are many cases in which the slice spectral sequences for
$\MGL\comp{\ell}$ and related motivic spectra do not collapse.  The
authors determined $\pi_\star\BPn{n}\comp{2}$ and $\pi_\star\BP\comp{2}$ over $\QQ$ via the
motivic Adams spectral sequence in \cite{BPQ},  and Mike Hill covered the case of
base field $\RR$ in \cite{HillExt}.  Over both $\QQ$ and $\RR$, the slice spectral
sequence for $\BPn{n}\comp{2}$ does not collapse for $n\ge 1$, and the
slice spectral sequence for $\BP\comp{2}$ in fact has infinitely many
pages with nontrivial differentials.
\end{rmk}

\section{The motivic Adams-Novikov spectral sequence and its $E_2$-term} 
\label{sec:E2}

Here we recall basic facts about the motivic Adams-Novikov spectral sequence (MANSS) and then compute its $E_2$-term, 
the cohomology of the motivic Brown-Peterson Hopf algebroid, 
when $k$ is a low-dimensional field.

\subsection{The MANSS}

For each motivic spectrum $X$ and motivic homotopy ring spectrum $\EEE$ there is an associated $\EEE$-motivic Adams spectral sequence 
($\EEE$-MASS) for $X$ constructed in the following manner (completely analogous to the construction in, 
e.g., 
\cite[Lemma 2.2.9]{Ravenelbook}).

Set $X_0 = X$, let $K_s = \EEE\smsh X_s$, and let $X_{s+1}$ be the
fiber of $X_s\to K_s$, the map induced by the unit $\one\to \EEE$.
We get the $\EEE$-Adams resolution for $X$,

\begin{equation}
\label{adamstower}
\xymatrix{
\cdots\ar[r] & X_{s+1} \ar[r]\ar[d] & X_{s} \ar[r]\ar[d] & X_{s-1}
\ar[r]\ar[d] & \cdots \ar[r] & X_0=X\ar[d]\\
& K_{s+1} & K_s &K_{s-1} &&K_0 = \EEE\smsh X
}.
\end{equation}

Applying the bigraded homotopy group functor to the Adams resolution yields an exact couple and an associated spectral sequence in a standard way.
Our primary example is the $\BP$-MASS, 
where $\BP$ is the $\ell$-completion of the $\ell$-local motivic Brown-Peterson spectrum.  
For legibility we abuse notation by omitting a symbol for the $\ell$-completion.  
There is, of course, 
another $\BP$-MASS in which $\BP$ is not $\ell$-completed, 
but we will not use it here.  
A good reason for doing this is that $\pi_\star \MZ_\ell$ can be
computed in terms of \'{e}tale (or Galois) cohomology of the base field, 
while $\pi_\star \MZ_{(\ell)}$ is much more difficult.  
Translating our abuse of notation into an abuse of terminology, 
we will call the $\BP$-MASS (based on $\ell$-complete $\BP$), 
the ($\ell$-\emph{primary}) \emph{motivic Adams-Novikov spectral sequence} (MANSS).

\begin{thm} \label{thm:MANSS}
The MANSS over $k$ is a tri-graded multiplicative spectral sequence
with $E_2$-term
\[
  E_2^{s,m+n\alpha} = \Ext_{\BP_\star\BP}^{s,m+n\alpha}(\pi_\star \BP, \pi_\star\BP)
\]
and differentials of the form
\[
  d_r:E_r^{s,m+n\alpha}\to E_r^{s+r,m+r-1+n\alpha}.
\]
If $\ell=2$ and $k$ is a field of finite virtual cohomological dimension
(i.e., $\cd_2 k(i) < \infty$) with $p \ne 2$, or if $\ell\ne p$ and
$\cd_\ell k<\infty$ then the MANSS
converges to the stable motivic homotopy groups of $\one\comp{\ell}$,
\[
  E_2^{s,m+n\alpha}\implies \pi_{m-s+n\alpha} \one\comp{\ell}.
\]
\end{thm}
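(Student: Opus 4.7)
The plan is to extract a spectral sequence from the Adams tower (\ref{adamstower}), identify its $E_2$-page as an Ext group via standard Hopf-algebroid machinery, and then handle convergence separately, since that is the one place where the cohomological-dimension hypothesis and the $\ell$-completion really enter.

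First I would apply $\pi_\star(-)$ to the resolution to produce the exact couple whose $E_1$-page records $\pi_\star K_s = \pi_\star(\BP\smsh X_s)$ with $X_0 = \one$. Under the standard convention --- forced by the desired abutment $\pi_{m-s+n\alpha}\one\comp{\ell}$, so that $E_r^{s,m+n\alpha}$ sits in total bigrading $m-s+n\alpha$ at Adams filtration $s$ --- the connecting maps $K_s\to\Sigma X_{s+1}$ composed with the units $X_{s+1}\to K_{s+1}$ give the $d_1$-differential, and inductively $d_r:E_r^{s,m+n\alpha}\to E_r^{s+r,m+r-1+n\alpha}$ as stated. To identify $E_2$ with $\Ext_{\BP_\star\BP}(\pi_\star\BP,\pi_\star\BP)$, I need $\BP_\star\BP$ flat over $\pi_\star\BP$. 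Over a low-dimensional field this follows from Theorem \ref{thm:slice} applied to $\BP\smsh\BP$, which is slice diagonal at $\ell$ as a smash of slice-diagonal spectra; the slice argument then yields $\BP_\star\BP = (\pi_\star\BP)[t_1,t_2,\ldots]$, polynomial and hence flat. With flatness, the K\"unneth identification turns the $d_1$-complex into the cobar complex for the Hopf algebroid $(\pi_\star\BP,\BP_\star\BP)$, whose cohomology is the displayed Ext. Multiplicativity on $E_2$ is inherited from $\BP$ being a homotopy-commutative motivic ring spectrum, giving pairings of Adams resolutions compatible with the Yoneda product.

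The hard part is convergence. Here I would reduce to the convergence of the motivic Adams spectral sequence based on $\M\FF_\ell$, which is known under the stated hypotheses from earlier work. The ring map $\BP\to\M\FF_\ell$ induces a map of Adams towers; since $\ell$-complete $\BP$ and $\M\FF_\ell$ generate the same Bousfield class in this setting, their nilpotent completions of $\one$ agree, so one can identify the abutment of the MANSS with $\one\comp{\ell}$. The finite-cohomological-dimension hypothesis enters in two ways: via the vanishing range of Lemma \ref{lemma:vanish} it forces the Adams filtration to be exhaustive, and it controls the $\lim^1$ obstructions on the inverse system $\{X_s\}$ so that the spectral sequence converges strongly rather than merely conditionally. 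This is where the most care is needed, and in practice one would cite the existing convergence theorems rather than reproving them.
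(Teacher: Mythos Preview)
Your overall architecture matches the paper's: build the spectral sequence from the Adams tower, identify $E_2$ via the cobar complex, and handle convergence by comparison with the $\M\FF_\ell$-based motivic Adams spectral sequence. The paper's convergence argument is essentially what you sketch, though it is more specific: rather than invoking an equality of Bousfield classes, it cites \cite[Theorem 1]{KO} to identify the $\M\FF_\ell$-nilpotent completion of $\BP$ with its $\ell$-completion, then uses Dugger--Isaksen's cosimplicial realization \cite[\S 7.3]{DI} to identify the abutment of the MANSS with the $\M\FF_\ell$-nilpotent completion of $\one$, and finally applies \cite{KO} once more. Your mention of exhaustiveness and $\varprojlim^1$ via Lemma~\ref{lemma:vanish} is not how the argument actually runs; those issues are absorbed into the cited convergence theorem.

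There is, however, a genuine gap in your $E_2$ identification. You argue flatness of $\BP_\star\BP$ over $\pi_\star\BP$ by applying Theorem~\ref{thm:slice} to $\BP\smsh\BP$, asserting that a smash of slice-diagonal spectra is slice-diagonal. That assertion is not obvious and is not proved anywhere in the paper; the slice filtration is multiplicative, but this does not immediately compute the slices of a smash product. More seriously, even granting the claim, your argument only works over $\ell$-low-dimensional fields, whereas the theorem's $E_2$ identification is stated without that hypothesis. The correct route---which the paper uses just below in \eqref{eqn:coops}, citing \cite[Proposition 9.1]{NSO2}---is the general isomorphism $\BP_\star\BP \cong \BP^\top_*\BP^\top \otimes_{\pi_*\BP^\top}\pi_\star\BP$. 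Since $\BP^\top_*\BP^\top$ is polynomial over $\pi_*\BP^\top$, this exhibits $\BP_\star\BP$ as free over $\pi_\star\BP$ for any base field, with no slice argument and no dimension hypothesis needed.
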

\begin{proof}
Assume for the moment that $k$ has characteristic $0$.  By
\cite[Theorem 1]{KO}, the motivic Adams spectral sequence 
converges to $\pi_\star \one\comp{\ell}$.  
By the same theorem, the $\M\FF_\ell$-nilpotent completion of $\BP$ is
equivalent to the $\ell$-completion of $\BP$.  Thus the MANSS is
realized by the cosimplicial spectrum \cite[(7.6)]{DI}.  In \cite[\S
7.3]{DI}, Dugger and Isaksen prove that the homotopy limit of this
cosimplicial objects is exactly the $\M\FF_\ell$-nilpotent completion
of $\one$.  Invoking \cite[Theorem 1]{KO} again, we see that the MANSS
convergences to $\one\comp{\ell}$.  Multiplicative properties are formal and follow by the
same argument as the proof of \cite[Theorem 2.3.3]{Ravenelbook}.

Now assume that $k$ has postivie characteristic $p$.  The above argument
goes through as long \cite[Theorem 1]{KO} holds in positive
characteristic with $\ell\ne p$.  The argument in \cite{KO} is in fact
independent of characteristic as long as the dual motivic Steenrod
algebra $\pi_\star \M\FF_\ell\smsh \M\FF_\ell$ has the same structure
as its characteristic $0$ counterpart.\footnote{The paper \cite{KO}
  also appeals to the Bloch-Kato conjecture, but this again has the
  same form since we avoid the $p$-part.}  This is precisely the main
theorem of \cite{HoyKO} with $\ell\ne p$, so we are done.
\end{proof}

\subsection{The cohomology of the motivic Brown-Peterson Hopf algebroid}

We now study the $E_2$-term of the motivic Adams-Novikov spectral
sequence.  For the moment, let $k$ be an arbitrary base field with
$p\ne \ell$.  We have
\[
  E_2 = \Ext_{\BP_\star\BP}(\pi_\star \BP,\pi_\star
\BP),
\]
which in turn can be computed as the cohomology of the cobar
complex for the Hopf algebroid $(\pi_\star\BP,\BP_\star\BP)$.  (See
\cite[\S A1]{Ravenelbook} for details on the homological algebra of Hopf
algebroids.)  

Recall that by \cite[Proposition 9.1]{NSO2}, we have
\begin{equation} \label{eqn:coops}
  \BP_\star\BP \cong \BP^\top_*\BP^\top \otimes_{\pi_* \BP^\top}
  \pi_\star \BP
\end{equation}
where $\BP^\top$ is the topological Brown-Peterson spectrum.  The
graded rings $\pi_*\BP^\top$ and $\BP^\top_*\BP^\top$ refer to the
usual topological coefficients and cooperations put in motivic degrees
$\frac{*}{2}(1+\alpha)$.  This implies that the cobar complex
$C^*(\BP)$ for
\[
  (\pi_\star\BP,\BP_\star\BP)
\]
decomposes as
\begin{equation} \label{eqn:cobar}
  C^*(\BP) \cong C^*(\BP^\top) \otimes_{\pi_\star \BP^\top} \pi_\star \BP.
\end{equation}
Here $C^*(\BP^\top)$ refers to the usual cobar complex for
$(\pi_*\BP^\top,\BP^\top_*\BP^\top)$ with internal degrees shifted to
$\frac{*}{2}(1+\alpha)$.  Via (\ref{eqn:cobar}), we get the following
spectral sequence.

\begin{prop} \label{prop:UCSS}
Let
\[
  E_2^{s,m+n\alpha} =
\Ext_{\BP_\star\BP}^{s,m+n\alpha}(\pi_\star\BP,\pi_\star\BP)
\]
denote
the $E_2$-term of the MANSS and let
\[
  {}^\top E_2^{s,t} =
  \Ext_{\BP^\top_*\BP^\top}(\pi_*\BP^\top,\pi_*\BP^\top)
\]
denote the
$E_2$-term of the topological ANSS
grade-shifted so that $\Ext^{s,t}$ appears in motivic tri-degree
$(s,\frac{t}{2}(1+\alpha))$.  Consider ${}^\top E_2^{*,*}$ as a
$\pi_*\BP^\top$-module via the natural action of $\pi_*\BP^\top$ on
$C^*(\BP^\top)$.  There is a quint-graded universal
coefficient spectral sequence of homological type
\begin{equation} \label{eqn:UCSS}
  E^2_{r,s,t,m+n\alpha} = \Tor^{\pi_\star\BP^\top}_r({}^\top
  E_2^{s-r,t},\pi_{m+n\alpha}\BP) \implies E_2^{s,(m+\frac{t}{2})+(n+\frac{t}{2})\alpha}
\end{equation}
converging to the $E_2$-term of the MANSS.
\end{prop}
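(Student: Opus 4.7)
The plan is to exploit the tensor product decomposition \eqref{eqn:cobar} of the cobar complex via a standard hyper-homological double-complex argument. First I would choose a free resolution $F_\bullet \to \pi_\star\BP$ of $\pi_\star\BP$ as a motivically bigraded $\pi_*\BP^\top$-module (via the natural map sending topological $v_i$ to its motivic counterpart), then form the double complex
\[
K^{s,q} = C^s(\BP^\top) \otimes_{\pi_*\BP^\top} F_q
\]
with horizontal cobar differential and vertical resolution differential. By \eqref{eqn:coops}, each $C^s(\BP^\top) = (\BP^\top_*\BP^\top)^{\otimes s}$ is a free $\pi_*\BP^\top$-module, so both factors in the tensor product are flat over $\pi_*\BP^\top$.

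Next I would run the two standard spectral sequences of the total complex $\mathrm{Tot}(K)$. Taking vertical (resolution) homology first, flatness of $C^s(\BP^\top)$ collapses $C^s(\BP^\top) \otimes F_\bullet$ onto $C^s(\BP^\top) \otimes \pi_\star\BP = C^s(\BP)$ in resolution degree zero, so the second page is concentrated on the line $q=0$ and its horizontal differential is the cobar differential of $C^*(\BP)$; hence this spectral sequence collapses at the $E_2$-page and identifies $H^*(\mathrm{Tot}(K))$ with the MANSS $E_2$-term. Taking horizontal (cobar) cohomology first, flatness of each $F_q$ lets the cohomology pass through the tensor product, giving ${}^\top E_2^{*,*} \otimes_{\pi_*\BP^\top} F_q$ on the first page; the vertical resolution differential then produces the Tor groups $\Tor_r^{\pi_*\BP^\top}({}^\top E_2^{*,*},\pi_\star\BP)$ on the second page, yielding the claimed universal coefficient spectral sequence converging to the MANSS $E_2$-term.

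Tracking degrees, the topological internal degree $t$ contributes $\frac{t}{2}(1+\alpha)$ to the motivic grading by the convention preceding \eqref{eqn:coops}, combining with the factor $\pi_{m+n\alpha}\BP$ from the resolution to give the stated target bidegree $(m+t/2) + (n+t/2)\alpha$; similarly the topological cobar degree combines with the Tor degree $r$ to give the target MANSS filtration $s$. The main technical point is choosing the resolution $F_\bullet$ compatibly with the motivic bigrading so that the spectral sequence is quintuply graded as stated, which is routine since $\pi_\star\BP$ is a bigraded $\pi_*\BP^\top$-module and we may choose free generators for each $F_q$ in prescribed motivic bidegrees.
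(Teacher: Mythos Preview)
Your argument is correct and is precisely the construction underlying the homology K\"unneth spectral sequence that the paper invokes in a one-line citation (to Rotman's Theorem 10.90). You have simply unpacked that citation: a free resolution of the second tensor factor in \eqref{eqn:cobar}, the resulting double complex, and the two filtration spectral sequences---one collapsing to identify the abutment with the MANSS $E_2$, the other producing the $\Tor$ terms.
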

\begin{proof}
This is a direct consequence of the homology K\"unneth spectral
sequence \cite[Theorem 10.90]{Rotman}.
\end{proof}

\begin{rmk}
Given the intractability of the $\pi_*\BP^\top$-module structure on $C^*(\BP^\top)$, 
(\ref{eqn:UCSS}) is difficult to work with in cases when (\ref{eqn:cobar}) does not
simplify.  When $k$ is an $\ell$-low-dimensional field we will 
observe precisely such a simplification.  For arbitrary $k$, it would
be beneficial to have an alternate
``topological-to-motivic'' spectral sequence computing the motivic
Adams-Novikov $E_2$, perhaps one that takes advantage of extra
structure available in the stable motivic homotopy category.
\end{rmk}

We now specialize to $k$ an $\ell$-low-dimensional field.  By Corollary
\ref{cor:BP}, in this case we have
\[
  \pi_\star\BP = (\pi_\star \MZ_\ell)[v_1,v_2,\ldots] = \pi_*\BP^\top
  \otimes_{\ZZ_\ell} \pi_\star \MZ_\ell.
\]
As such, we can rewrite (\ref{eqn:cobar}) as
\begin{equation} \label{eqn:ldcobar}
  C^*(\BP) \cong C^*(\BP^\top)\otimes_{\ZZ_\ell} \pi_\star \MZ_\ell.
\end{equation}
We can thus apply the universal coefficient spectral sequence (now
better known as the universal coefficient theorem) over $\ZZ_\ell$ to
arrive at the following description of the MANSS $E_2$-term.

\begin{thm} \label{thm:UCT}
If $k$ is an $\ell$-low-dimensional field, then the $E_2$-term of the MANSS
sits in a (non-canonically) split short exact sequence
\begin{multline} \label{eqn:UCT}
  0\to {}^\top E_2^{s,t} \otimes \pi_{m+n\alpha}\MZ_\ell \to
  E_2^{s,(m+\frac{t}{2})+(n+\frac{t}{2})\alpha}\\ \to
  \Tor^{\ZZ_\ell}_1({}^\top E_2^{s-1,t},\pi_{m+n\alpha}\MZ_\ell) \to 0.
\end{multline}
\end{thm}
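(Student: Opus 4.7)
The plan is to apply the classical universal coefficient theorem over the PID $\ZZ_\ell$ to the decomposition
\[
C^*(\BP) \cong C^*(\BP^\top) \otimes_{\ZZ_\ell} \pi_\star \MZ_\ell
\]
of equation (\ref{eqn:ldcobar}), which is the $\ell$-low-dimensional specialization of (\ref{eqn:cobar}) afforded by Corollary \ref{cor:BP}. The merit of this specialization is precisely that it reduces the K\"unneth spectral sequence (\ref{eqn:UCSS}) to a short exact sequence, because we are now tensoring over the PID $\ZZ_\ell$ rather than the polynomial ring $\pi_*\BP^\top$.

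First I would verify the flatness hypothesis needed to invoke the UCT. Each term of the topological cobar complex is a tensor product over $\pi_*\BP^\top = \ZZ_\ell[v_1,v_2,\ldots]$ of copies of $\BP^\top_*\BP^\top = \pi_*\BP^\top[t_1,t_2,\ldots]$. Since both $\pi_*\BP^\top$ and $\BP^\top_*\BP^\top$ are polynomial rings over $\ZZ_\ell$, every term of $C^*(\BP^\top)$ is free as a $\ZZ_\ell$-module, and this freeness is preserved by the tensor products over $\pi_*\BP^\top$ appearing in the cobar construction. The cobar differentials are $\ZZ_\ell$-linear, so $C^*(\BP^\top)$ is a cochain complex of free $\ZZ_\ell$-modules, which is exactly the hypothesis under which the classical UCT produces a (non-canonically) split short exact sequence.

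Next, fixing a bidegree $(m,n)$, I would restrict in the motivic bigrading and apply the UCT for cochain complexes of free modules over a PID to $C^*(\BP^\top) \otimes_{\ZZ_\ell} \pi_{m+n\alpha}\MZ_\ell$. This yields a split short exact sequence of the stated form once one identifies $H^*C^*(\BP^\top) = {}^\top E_2^{*,*}$ with the topological Adams-Novikov $E_2$-page and $H^*C^*(\BP) = E_2^{*,*}$ with the motivic $E_2$-page. Since $\ZZ_\ell$ is a PID, $\Tor^{\ZZ_\ell}_i$ vanishes for $i \ge 2$, so no further terms appear.

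The only remaining task is tri-grading bookkeeping: a class in ${}^\top E_2^{s,t}$ sits in motivic tri-degree $(s, \tfrac{t}{2}(1+\alpha))$ per the grade-shift convention of Proposition \ref{prop:UCSS}, and tensoring with a class in $\pi_{m+n\alpha}\MZ_\ell$ produces something of motivic bidegree $(m+\tfrac{t}{2}) + (n+\tfrac{t}{2})\alpha$, matching the indexing in (\ref{eqn:UCT}). I do not anticipate any serious obstacle; the substance of the theorem is the upfront work already done in Corollary \ref{cor:BP} (the collapse of the slice spectral sequence giving $\pi_\star\BP$ the explicit product form), and once (\ref{eqn:ldcobar}) is in hand the proof reduces to a single invocation of the PID universal coefficient theorem.
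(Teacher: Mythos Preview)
Your proposal is correct and follows essentially the same route as the paper: once Corollary~\ref{cor:BP} gives the decomposition $C^*(\BP)\cong C^*(\BP^\top)\otimes_{\ZZ_\ell}\pi_\star\MZ_\ell$, both you and the paper simply invoke the universal coefficient theorem over the PID $\ZZ_\ell$. Your write-up is in fact more detailed than the paper's one-sentence proof, spelling out the freeness of the cobar terms and the grading bookkeeping that the paper leaves implicit.
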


In what follows, we will frequently abbreviate $\Tor_1^{\ZZ_\ell}$ to $\Tor$.

\section{The first $\ell$-complete stable motivic stem, $\ell>2$} 
\label{sec:largePrimePi1}

Let us now refer to the groups $\pi_{m+n\alpha} \one\comp{\ell}$ as the
$\ell$-complete stable motivic stems. For $k$ $\ell$-low-dimensional, we will use
the explicit decomposition of the MANSS $E_2$-term in
Theorem \ref{thm:UCT} to perform computations in the $\ell$-complete
stable motivic stems.

Recall that the MANSS based on $\ell$-complete $\BP$ converges as
\[
  E_2^{s,m+n\alpha} \implies \pi_{m-s+n\alpha}\one\comp{\ell}.
\]
Henceforth we will follow the good practice of depicting and thinking
about the MANSS in ``Adams grading.''  We will fix a
weight $n\alpha$ and think of the MANSS as a bi-graded spectral
sequence in $s$ and $m$.  We will then plot (either graphically or
mentally) the integers $m-s$ along the horizontal axis and $s$ along
the vertical axis (with $n$ suppressed).  In such a grading scheme,
the differential $d_r$ originating in coordinates $(m-s,s)$ has target
with coordinates 
$(m-s-1,s+r)$, one unit to the left and $r$ units up.  We will refer
to the portion of the MANSS in $E_2^{s,m+n\alpha}$ with $m-s+n\alpha$
fixed and $s$ varying as the $(m-s+n\alpha)$-column of the MANSS.

Our strategy for computing $\pi_1 S\comp{\ell}$ is simple:  compute the $0$-, $1$-,
and $2$-columns of the $\ell$-primary MANSS and then make various arguments regarding
potential differentials.   The computations break into three pieces:
$\ell>3$ (very easy), $\ell=3$
(easy), and $\ell=2$ (hard).  In this section, we address $\ell>3$ and $\ell=3$
as a warmup for $\ell=2$ in the next section.

\subsection{Primes greater than $3$}

Let us pose and answer the following question:  given Theorem
\ref{thm:UCT} and the structure of $\pi_\star \M\ZZ_\ell$, what appears
in the $1$-column of the $\ell$-MANSS?  First off, we have contributions of
the form ${}^\top E_2^{s,t}\otimes \pi_{m+n\alpha} \M\ZZ_\ell$ whenever
\[
  \frac{t}{2}(1+\alpha)-s + m+n\alpha = 1.
\]
In this case, $t-s+m+n = 1$, and since $k$ is $\ell$-low-dimensional
we can use \ref{lemma:vanish} to 
see that $1\le t-s\le 3$.  (This uses the fact that the $0$-column of
${}^{\top}E_2$ is concentrated in homological degree $s=0$.)  For any prime $\ell$, the first entry after
$\ZZ_\ell = {}^\top E_2^{0,0}$ in the topological ANSS is
$\ZZ/\ell\{\alpha_1\}$ with homological degree $1$ and total dimension
$2\ell-3$; there are no other nontrivial terms with the same total
dimension in the interval $[1,2\ell-3]$.  In particular, if $\ell>3$ there are no
contributions to the $1$-column of the form ${}^\top E_2^{s,t}\otimes
\pi_{m+n\alpha} \M\ZZ_\ell$.

The other contributions to the $1$-column are of the form
$\Tor({}^\top E_2^{s,t},\pi_{m+n\alpha} \M\ZZ_\ell)$ with
\[
  \frac{t}{2}(1+\alpha)-s+m+n\alpha = 0.
\]
By the same logic, $1\le t-s\le 2$, and we see that there are no
$\Tor$ contributions to the $1$-column for $\ell\ge 3$.

In summary, 
the $1$-column of the $\ell$-MANSS vanishes when $\ell>3$, 
and we deduce the following.

\begin{thm} \label{thm:p>3pi1}
If $k$ is $\ell$-low-dimensional and $\ell>3$, then
\[
 \pi_{1} \one\comp{\ell} = 0.
\]
\qed
\end{thm}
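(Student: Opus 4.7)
The plan is to show that the entire $1$-column of the $\ell$-primary MANSS vanishes for $\ell > 3$, at which point convergence (Theorem \ref{thm:MANSS}) gives $\pi_1 \one\comp{\ell} = 0$ immediately. The argument essentially repeats the analysis of bidegrees carried out in the paragraphs preceding the theorem statement, which I would now record as the actual proof.

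First I would list the possible contributions to $E_2^{s, m+n\alpha}$ with $m - s + n\alpha$ equal to $1$ using the splitting of Theorem \ref{thm:UCT}. Each such group fits into a (split) extension with quotient of the form $\Tor\bigl({}^\top E_2^{s-1, t}, \pi_{m'+n'\alpha}\MZ_\ell\bigr)$ and submodule of the form ${}^\top E_2^{s, t} \otimes \pi_{m'+n'\alpha}\MZ_\ell$, where the tri-gradings match as in equation (\ref{eqn:UCT}).

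Next I would combine the $1$-column condition with the vanishing range of Lemma \ref{lemma:vanish} (namely $0 \le m' \le -n'$ and $m' \ge -n' - \cd_\ell k$, with $\cd_\ell k \le 2$) to derive the restrictions $1 \le t - s \le 3$ for the tensor summand and $1 \le t - s \le 2$ for the Tor summand. This is pure bookkeeping and is the only place where the $\ell$-low-dimensional hypothesis enters.

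Finally I would invoke the classical structure of ${}^\top E_2^{*,*}$: the $0$-line sits entirely in $t - s = 0$, and the next nonzero class is $\alpha_1 \in {}^\top E_2^{1, 2\ell}$, contributing in $t - s = 2\ell - 4$. For $\ell > 3$ we have $2\ell - 4 \ge 6$, which exceeds the window $[1,3]$ for both contribution types, so every term in the $1$-column is zero. The only real subtlety — and the one thing worth being careful about — is getting the grading shift by $\tfrac{t}{2}(1+\alpha)$ correct so that the window of allowed values of $t - s$ comes out right; once that is in hand, the result follows with no further input.
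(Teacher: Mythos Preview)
Your proposal is correct and follows exactly the paper's approach: the paragraphs immediately preceding the theorem carry out precisely this bookkeeping, and the theorem is stated with a \qed for that reason. One small numerical slip worth fixing: $\alpha_1$ lives in ${}^\top E_2^{1,\,2\ell-2}$ with $t-s = 2\ell-3$ (not ${}^\top E_2^{1,\,2\ell}$ with $t-s = 2\ell-4$, which are not even mutually consistent), but since $2\ell-3 \ge 7$ for $\ell>3$ this does not affect the conclusion.
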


\subsection{The prime $3$}

By the preceding discussion, we see that when $\ell=3$ there is a unique
contribution to the $1$-column of the $3$-MANSS of the form
\[
  {}^\top E_2^{1,4}\otimes \pi_{-2\alpha}\M\ZZ_3
\]
since the 3-column of ${}^{\top}E_2$ is concentrated in filtration
$s=1$.  Of course, 
the first factor is isomorphic to $\ZZ/3\{\alpha_1\}$ and the second
is $K^M_2(k)\comp{3}$, the 3-completion of $K^M_2(k)$.   (See, e.g., \cite[p.501]{Zahler}
for a picture of the $E_2$-page of the topological ANSS at the prime
3.)  
By similar bookkeeping, the $0$-column is concentrated in homological degree $0$, 
so our $1$-column entry is an infinite cycle.  
Additionally, 
 $\ZZ/3\{\alpha_1\}\otimes K^M_2(k)\comp{3}$ has homological degree $1$, 
and hence survives the spectral sequence.

\begin{thm} \label{thm:3pi1}
If $k$ is $3$-low-dimensional, then
\[
\pi_1 \one\comp{3} \cong K^M_2(k)/3.
\]
\end{thm}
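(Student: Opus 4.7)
My plan is to apply the $\ell$-primary MANSS analysis from the $\ell>3$ warm-up to the prime $\ell=3$. Only one additional topological class becomes available in the relevant range, namely $\alpha_1 \in {}^\top E_2^{1,4}$, and it will contribute a single nonzero summand to the $1$-column. Convergence to $\pi_1 \one\comp{3}$ is guaranteed by Theorem \ref{thm:MANSS} since $\cd_3 k \leq 2$ and $3 \neq p$.

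First I would enumerate the contributions to the $1$-column via the UCT of Theorem \ref{thm:UCT}. Tensor contributions ${}^\top E_2^{s,t} \otimes \pi_{m+n\alpha}\MZ_3$ to the $1$-column satisfy $1 \leq t - s \leq 3$ by the vanishing range of Lemma \ref{lemma:vanish}, and the corresponding Tor contributions satisfy $1 \leq t - s \leq 2$ by the same bookkeeping shifted into the $0$-column. At the prime $3$, the only nonzero class of the topological Adams-Novikov $E_2$-page in Adams dimension $t-s \in \{1,2,3\}$ at positive filtration is $\ZZ/3\{\alpha_1\} \in {}^\top E_2^{1,4}$, which lies at $t-s = 3$ and therefore appears only on the tensor side. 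Solving $m + t/2 = s + 1$ and $n + t/2 = 0$ at $(s,t) = (1,4)$ forces $(m,n) = (0,-2)$, and the resulting $E_2$-class is
\[
{}^\top E_2^{1,4} \otimes \pi_{-2\alpha}\MZ_3 \cong \ZZ/3\{\alpha_1\} \otimes K^M_2(k)\comp{3} \cong K^M_2(k)/3,
\]
sitting in MANSS filtration $s = 1$.

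Next I would verify that this class survives. The analogous bookkeeping applied to the $0$-column forces $s = 0$ (no torsion topological class fits the narrower bound), so the $0$-column is concentrated in filtration $0$, and every outgoing differential $d_r: E_r^{1,\,2+0\alpha} \to E_r^{1+r,\,1+0\alpha}$ has zero target. Incoming differentials would originate in the $2$-column at filtration $1-r < 0$, which does not exist. Hence the class is a permanent cycle, and since it is the unique nonzero summand in the $1$-column of $E_\infty$, there is no filtration extension to resolve. Convergence then yields $\pi_1 \one\comp{3} \cong K^M_2(k)/3$.

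The main obstacle is making sure the UCT bookkeeping is genuinely exhaustive across the $0$-, $1$-, and $2$-columns: the class $\alpha_1$ at Adams dimension $3$ comes very close to producing an additional Tor contribution and a careful reading of the range is required to see that it narrowly misses. Once the $\ell=3$ sparsity of the topological ANSS in low Adams dimension is recorded, the argument reduces to straightforward degree-counting in the machinery built up for $\ell>3$.
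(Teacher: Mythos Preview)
Your argument is correct and follows the paper's approach essentially line for line: both isolate $\ZZ/3\{\alpha_1\}\otimes \pi_{-2\alpha}\MZ_3 \cong K^M_2(k)/3$ as the unique $1$-column contribution via the UCT ranges $1\le t-s\le 3$ (tensor) and $1\le t-s\le 2$ (Tor), observe that the $0$-column is concentrated in filtration $0$, and conclude that a filtration-$1$ class can neither support nor receive a differential. The only slip is cosmetic: the target of your outgoing $d_r$ should read $E_r^{1+r,\,(r+1)+0\alpha}$ rather than $E_r^{1+r,\,1+0\alpha}$, but the column and filtration of that target are exactly as you claim.
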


\section{The first $2$-complete stable motivic stem} 
\label{sec:2pi1}

We now execute the same strategy when $\ell=2$ and $k$ is a $2$-low-dimensional field.  
It turns out that all the classes in the $1$-column of the $2$-MANSS are infinite cycles, 
but there exists a nontrivial entering $d_3$-differential.  
There are also several nontrivial extensions.  
Unlike when $\ell\ge 3$, 
there is real work to do when $\ell=2$ and our arguments are not strictly combinatorial.
In this section, we shall refer to the $2$-MANSS as simply the MANSS.

We begin by reviewing some known facts about ${}^\top E_2^{*,*}$ and
$\pi_\star\MZ_2$.  By Lemma \ref{lemma:vanish} and the
low-dimensionality of $k$, we know 
that $\pi_{m+n\alpha}\MZ_2 = H^{-(m+n)}(k;\ZZ_2(-n)) = 0$ for
$m<0$, $n>-m$, or $-m-2>n$.

\begin{figure}[h!]
\centering
\includegraphics[width=4.5in]{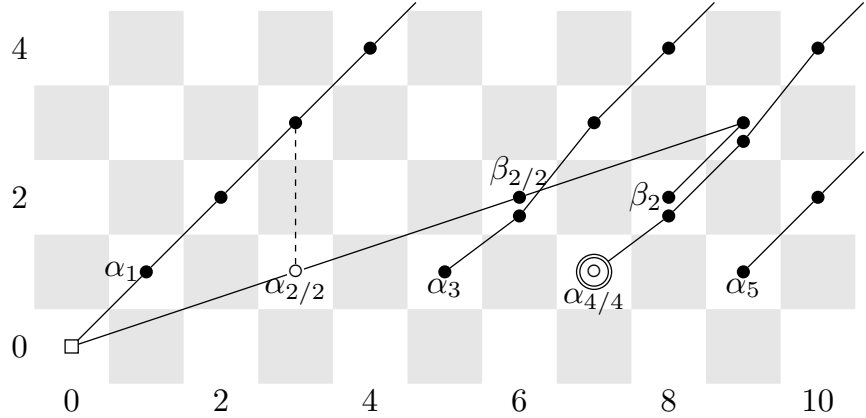}
\caption{The $E_2$ page of the topological ANSS} \label{fig:ANSS}
\end{figure}

Figure \ref{fig:ANSS} is a graphical representation of ${}^\top E_2^{s,t}$
in which the horizontal axis represents $t-s$ and the vertical axis
represents $s$.  (This is the standard, ``Adams graded'' way of
drawing the picture in which the $(t-s)$-column contains all the
potential $\pi_{t-s}$ contributions.)  This picture is well-known to
algebraic topologists and can be found in, e.g., \cite[p.500]{Zahler};
see \cite{MRW} or \cite{Ravenelbook} for the naming conventions and many
more details.
Here $\square$ represents a copy of $\ZZ_2$, $\bullet$ represents
$\ZZ/2$, $\circ$ represents $\ZZ/4$, and $\circ$ with two circles
around it represents $\ZZ/16$.

Theorem \ref{thm:UCT} provides a method for combining this picture
with $\pi_\star\MZ_2$ in order to produce the $E_2$ page of the
MANSS.  Figure \ref{fig:MANSS} presents this data in the following form:  each
ANSS term $A$ produces $A\otimes \pi_\star \MZ_2$ and
$\Tor(A,\pi_\star \MZ_2)$ terms in the MANSS $E_2$.  If we draw our
chart for $E_2^{s,m+n\alpha}$ so that $m-s+n$ is on
the horizontal axis and $s$ is on the vertical axis (and keep in mind
our vanishing results on $\pi_\star\MZ_2$ when $k$ is
2-low-dimensional), then the $A\otimes \pi_\star \MZ_2$ terms appear in the
original position of $A$ (in the topological ANSS $E_2$ chart) and the two positions
to the left of this location.  Similarly, the
$\Tor(A,\pi_\star\MZ_2)$ contribution appears one down and one down
and left of the original location of $A$.  So if $A$ is represented by
a graphical primitive $\bullet$ in the ANSS $E_2$ chart, we represent
its contributions to the MANSS $E_2$ chart as the following
color-coded symbols.

\begin{center}
\includegraphics[width=1in]{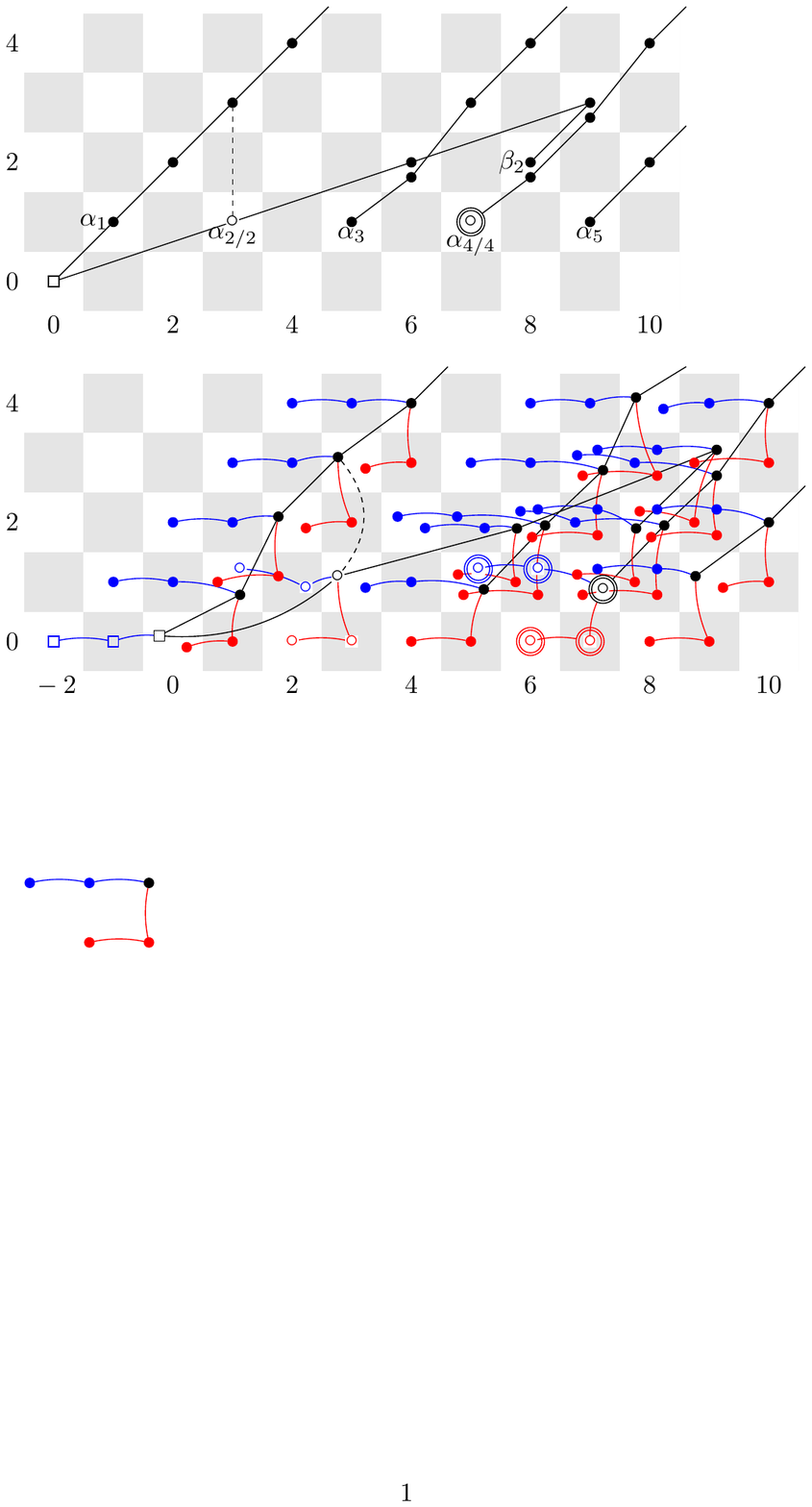}
\end{center}

\noindent Here the blue terms come from tensoring with $\pi_\star\MZ_2$ and the
red terms come from $\Tor(-,\pi_\star\MZ_2)$.  (This graphic and
Figure \ref{fig:MANSS} are best viewed in color.)  Note that there is
no $\Tor$ term contributed by $\pi_{n-n\alpha}\MZ_2$ because these
groups are free.

It is important to keep in mind that the weight $n$ is
suppressed in our picture.  Weights may be recovered by recalling that an ANSS term
in bidegree $s,t$ has weight $\frac{1}{2}t$ and then applying the
universal coefficient theorem.  Single graphical primitives represent
contributions to infinitely many different weights.

\begin{figure}[h!]
\centering
\includegraphics[width=5.5in]{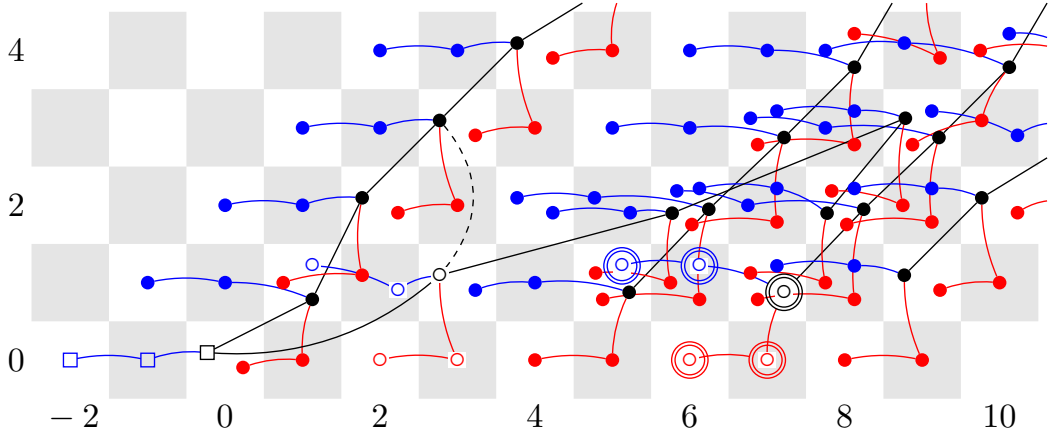}
\caption{The $E_2$ page of the MANSS} \label{fig:MANSS}
\end{figure}

\begin{prop} \label{prop:E2}
After choosing splittings in Theorem \ref{thm:UCT}, the $0$-, $1$-,
and $2$-columns of the $E_2$-page of the MANSS over a 
$2$-low-dimensional field take the form depicted in Table \ref{table:E2}.
\end{prop}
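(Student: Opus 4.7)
The plan is to assemble Table \ref{table:E2} by running the universal coefficient splitting (\ref{eqn:UCT}) of Theorem \ref{thm:UCT} against the topological Adams-Novikov $E_2$ shown in Figure \ref{fig:ANSS}, collecting the summands that land in the 0-, 1-, or 2-column of the MANSS.

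First I would bound the set of topological ANSS classes that can possibly contribute. A tensor summand ${}^{\top}E_2^{s,t}\otimes \pi_{m+n\alpha}\MZ_2$ lives in MANSS tri-degree $\bigl(s,(m+t/2)+(n+t/2)\alpha\bigr)$, so in the column $c=m+t/2-s$ of weight $w=n+t/2$. Lemma \ref{lemma:vanish} together with $\cd_2 k\le 2$ says $\pi_{m+n\alpha}\MZ_2$ vanishes unless $m\ge 0$, $m+n\le 0$, and $m+n\ge -2$. Imposing $c\le 2$ together with $m\ge 0$ forces $t-2s\le 4$, which restricts the enumeration to a short list of topological classes: the unit in stem $0$, $\alpha_1\in {}^{\top}E_2^{1,2}$, $\alpha_1^2\in {}^{\top}E_2^{2,4}$, the $\ZZ/8$ generated by $\alpha_{2/1}\in {}^{\top}E_2^{1,4}$, and the remaining stem-$4$ and higher-filtration classes satisfying $t\le 2s+4$. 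The Tor summands are constrained in the same way after a shift of one in filtration.

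Next, for each surviving topological class $A$ I would compute $A\otimes\pi_\star\MZ_2$ and $\Tor(A,\pi_\star\MZ_2)$ using the Milnor conjecture identification $\pi_\star\MZ_2\cong (K^M_*(k)/2)[\tau]$; recall that $\pi_{m+n\alpha}\MZ_2=H^{-m-n}(k;\ZZ_2(-n))$, which is $2$-primary torsion for $-m-n\ge 1$ and a free $\ZZ_2$-module on the edge $m=-n$. Tensor contributions land in the expected column with $K^M_*(k)/2^r$ coefficients, while Tor contributions arise from the cyclic $2$-torsion of $\pi_\star\MZ_2$; in particular the $\ZZ/8\{\alpha_{2/1}\}$ yields a Tor summand one step below its tensor contribution. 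Assembling these column by column and weight by weight, after fixing a splitting of (\ref{eqn:UCT}), produces the entries of Table \ref{table:E2}.

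The main obstacle will be the bookkeeping: each topological class contributes a whole family of motivic summands indexed by the allowed weights, and one must carefully label each summand by its originating topological class and its Milnor K-theoretic coefficient, keep the tensor and Tor contributions separate, and track the filtration shift that the Tor introduces. No new mathematical input beyond Theorem \ref{thm:UCT}, Lemma \ref{lemma:vanish}, and the well-known low-stem structure of the topological Adams-Novikov $E_2$ is required.
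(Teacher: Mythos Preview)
Your approach is exactly the paper's: both treat this as a rote enumeration combining Theorem \ref{thm:UCT}, the low-stem structure of ${}^\top E_2$, and the vanishing range of Lemma \ref{lemma:vanish}. Two factual slips in your write-up would, however, corrupt the resulting table. First, the class in ${}^\top E_2^{1,4}$ is $\alpha_{2/2}$ generating $\ZZ/4$, not $\ZZ/8$; the hidden extension to $\ZZ/8$ in $\pi_3$ happens only after the spectral sequence runs, and Table \ref{table:E2} records the $E_2$-page. Second, the identification $\pi_\star\MZ_2\cong (K^M_*(k)/2)[\tau]$ is wrong---that formula is for $\pi_\star\M\FF_2$. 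Your next clause, $\pi_{m+n\alpha}\MZ_2 = H^{-m-n}(k;\ZZ_2(-n))$, is the correct description, and it is these $\ZZ_2$-coefficient groups (not their mod-$2$ reductions) that populate the table. With those two corrections the bookkeeping goes through as stated.
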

\begin{proof}
This is a rote calculation given Theorem \ref{thm:UCT}, the form of
${}^\top E_2^{*,*}$, and Lemma \ref{lemma:vanish} on the vanishing
range of $\pi_\star \MZ_2$.
\end{proof}

\renewcommand{\arraystretch}{1.5}
\begin{table}
\centering
\begin{tabular}[h]{|c|c|c|}
\vdots&\vdots&\vdots\\\hline

0&0&0\\\hline

0&0&$\ZZ/2\{\alpha_1^4\}\otimes \pi_{2-4\alpha}\MZ_2$\\\hline

0&$\ZZ/2\{\alpha_1^3\}\otimes \pi_{1-3\alpha}\MZ_2$&
$\ZZ/2\{\alpha_1^3\}\otimes \pi_{2-3\alpha}\MZ_2$\\\hline

$\ZZ/2\{\alpha_1^2\}\otimes
\pi_{-2\alpha}\MZ_2$
&$\ZZ/2\{\alpha_1^2\}\otimes
\pi_{1-2\alpha}\MZ_2$
&$\begin{array}{c}\ZZ/2\{\alpha_1^2\}\otimes
\pi_{2-2\alpha}\MZ_2\\ \oplus \\
\Tor(\ZZ/2\{\alpha_1^3\},\pi_{1-3\alpha}\MZ_2) \end{array}$\\\hline

$\ZZ/2\{\alpha_1\}\otimes
\pi_{-\alpha}\MZ_2$
&$\begin{array}{c}\ZZ/2\{\alpha_1\}\otimes
\pi_{1-\alpha}\MZ_2\\ \oplus\\
\Tor(\ZZ/2\{\alpha_1^2\},\pi_{-2\alpha}\MZ_2)\\ \oplus \\
\ZZ/4\{\alpha_{2/2}\}\otimes
\pi_{-2\alpha}\MZ_2 \end{array}$
&$\begin{array}{c} \Tor(\ZZ/2\{\alpha_1^2\},\pi_{1-2\alpha}\MZ_2) \\
  \oplus \\ \ZZ/4\{\alpha_{2/2}\}\otimes
  \pi_{1-2\alpha}\MZ_2\end{array}$\\\hline

$\ZZ_2\{1\}\otimes\pi_0\MZ_2$&$\Tor(\ZZ/2\{\alpha_1\},\pi_{-\alpha}\MZ_2)$&
$\Tor(\ZZ/4\{\alpha_{2/2}\},
\pi_{-2\alpha}\MZ_2)$\\\hline
0&1&2
\end{tabular}
\caption{The 0-, 1-, and 2-columns of the MANSS over a low-dimensional
  field.}
\label{table:E2}
\end{table}
\renewcommand{\arraystretch}{1.0}

We will need the following lemma about the simplicial Hopf map.

\begin{lemma} \label{lemma:topmot}
The simplicial Hopf map $\eta_s$ is nontrivial of order 2 in $\pi_1\one\comp{2}$
over any scheme.
\end{lemma}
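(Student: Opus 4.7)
The plan is to prove the two assertions---order dividing $2$ and nontriviality---separately, in both cases exploiting the topological provenance of $\eta_s$.

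The order assertion is straightforward. By construction $\eta_s = c(\eta_{\top})$, where $c : \SH \to \SH(S)$ is the constant presheaf functor and $\eta_{\top}\in\pi_{1}^{s}$ is the classical Hopf map. Since $c$ is an additive (symmetric monoidal) functor and $2\eta_{\top} = 0$, we have $2\eta_s = 0$ in $\pi_{1}\one$, and \emph{a fortiori} in $\pi_{1}\one\comp{2}$.

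For nontriviality I would use a realization functor to detect $\eta_s$. When $S = \Spec k$ with $k \hookrightarrow \CC$, Betti realization $\SH(k) \to \SH$ is symmetric monoidal, and upon $2$-completion it sends $\eta_s$ to the nonzero class $\eta_{\top}$ in the $2$-complete topological $1$-stem $\ZZ/2$; hence $\eta_s \ne 0$ in $\pi_{1}\one\comp{2}$. For a general field $k$ of characteristic $\ne 2$, a $2$-adic {\'e}tale realization functor $\SH(k)\comp{2} \to \SH\comp{2}$ plays the same role, carrying $\eta_s$ to the nonzero $\eta_{\top}$. For an arbitrary nonempty scheme $S$, I would pick a point $s\in S$ of residue characteristic $\ne 2$ and pull back along $\Spec\kappa(s)\to S$, reducing to the field case by naturality of $c$ under base change.

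The main obstacle is the case where $S$ has no point of residue characteristic $\ne 2$---i.e., an $\FF_{2}$-scheme---where $2$-adic realization degenerates. A natural fallback here is via motivic Steenrod operations: the cofiber $C\eta_s \simeq c(C\eta_{\top})$ inherits a nonzero $\Sq^{2}$-relation in its mod $2$ motivic cohomology from the classical identity $\Sq^{2}x = x^{2}$ on the degree $2$ generator of $H^{*}(\mathbb{CP}^{2};\FF_{2})$, and this relation would be destroyed by a splitting $C\eta_s \simeq \one \vee S^{2}$ forced by $\eta_s = 0$. Making this precise requires the Hoyois-Kelly-{\O}stv{\ae}r construction of the motivic Steenrod algebra in characteristic $2$ together with a careful accounting of the $\tau$-twist relating motivic and topological $\Sq^{2}$ on weight-zero classes of constant presheaves, and is the main technical subtlety.
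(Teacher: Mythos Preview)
Your order-$2$ argument is fine. For nontriviality the paper takes a different and more uniform route: rather than realizing back to topology, it detects $\eta_s$ via the Hurewicz map to algebraic $K$-theory. One reduces to the base $\Spec\ZZ$ by compatibility of the constant presheaf functor with base change, and then uses that the unit $\one\to\KGL$ carries $\eta_s$ to the class of $-1$ in $\pi_1\KGL\cong K_1(\ZZ)\cong\{\pm 1\}$ (a classical observation going back to Quillen's letter to Milnor). Since $\KGL$ is available over any base and this computation is insensitive to residue characteristic, no case analysis is needed.

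Your realization strategy is correct when there is a point of residue characteristic $\ne 2$, but the $\FF_2$-scheme case is a genuine gap, not just a technicality. The Steenrod-operation workaround you sketch runs into a real obstacle: the Hoyois--Kelly--{\O}stv{\ae}r computation of the motivic Steenrod algebra in positive characteristic treats $\M\FF_\ell$ for $\ell\ne p$, whereas you need the action of $\Sq^2$ on mod-$2$ motivic cohomology over an $\FF_2$-base, i.e.\ the $\ell=p$ situation, where the structure of the dual motivic Steenrod algebra is not known to have the expected form. The paper's $\KGL$ argument sidesteps this entirely, and is what you should use here.
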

\begin{proof}
It suffices to check the result over $\Spec \ZZ$. It is observed in
\cite{QuillenMilnorLetter} that (working in topological spectra) $\eta_s$
represents the class of $-1$ in $K_1(\ZZ)
\cong \pi_1\KGL \cong \{\pm 1\}$.  Consider the Quillen pair
$(c^*,c_*)$ in which $c^*$ is induced by the constant presheaf
functor.  Since (reinterpreting the construction of $\KGL$ in
\cite{Hornbostel}) $c_*\KGL \simeq K(k)$ (where $K(k)$ is the
topological spectrum representing algebraic $K$-theory of $k$) and the adjoint of the unit
map $\one\to \KGL$ is the unit map for $K(k)$, we find
that the image of $c^*\eta_s$ under the Hurewicz map for $\KGL$ also
corresponds to the generator $\eta_s$ of $\pi_1\KGL$.
\end{proof}

\begin{prop} \label{prop:perm}
Over a $2$-low-dimensional field, every differential $d_r$, $r\ge 2$,
exiting the 1-column of the MANSS is trivial.
\end{prop}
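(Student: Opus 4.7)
The plan is to use Lemma~\ref{lemma:vanish} and the hypothesis $\cd_2 k \le 2$ to isolate the unique candidate nonzero differential exiting the $1$-column, and then to rule it out using Morel's computation of $\pi_0\one$ together with Voevodsky's Milnor conjecture.

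First, I would compute the $0$-column of the MANSS at weight $0$. Combining Theorem~\ref{thm:UCT}, Lemma~\ref{lemma:vanish}, and the cohomological dimension bound, $E_2^{s, s + 0\alpha} \cong \alpha_1^s \otimes K^M_s(k)/2$ for $s \in \{0,1,2\}$ and vanishes otherwise. Any $d_r$ exiting the $1$-column at filtration $s$ lands at filtration $s+r$ of the $0$-column; requiring $r \ge 2$ and $s + r \le 2$ forces $(s,r) = (0,2)$. Inspecting Table~\ref{table:E2} shows that for $s \in \{1,2,3\}$ every potential target of an exiting $d_r$ lies at filtration $\ge 3$ in the $0$-column and hence vanishes, while for $s \ge 4$ the $1$-column itself is trivial. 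Thus the only candidate nonzero exiting differential is the single $d_2 \colon E_2^{0, 1+0\alpha} \to E_2^{2, 0+0\alpha}$, carrying $\Tor(\alpha_1, k^\times\comp{2})$ into $\alpha_1^2 \otimes K^M_2(k)/2$.

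Next, I would eliminate this last $d_2$ by a size comparison on the $E_\infty$-page. By Morel's theorem the $0$-column converges to $\pi_0\one\comp{2} \cong GW(k)\comp{2}$, and the associated graded of $GW(k)\comp{2}$ with respect to the fundamental ideal filtration is, by the Milnor conjecture together with $\cd_2 k \le 2$, exactly $\ZZ_2 \oplus K^M_1(k)/2 \oplus K^M_2(k)/2$. This matches the $0$-column $E_2$-page term by term. Consequently no differential, incoming or outgoing, can touch the $0$-column without making $|E_\infty|$ strictly smaller than $|E_2|$ there, contradicting convergence; in particular the candidate $d_2$ vanishes.

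The hardest step will be verifying that the size-matching is honest after $2$-completion and that no exiting differentials out of the $0$-column could secretly balance the count. Both are resolved by Voevodsky's solution of the Milnor conjecture together with the $2$-completed stabilization $I^{\cd_2 k + 1} = 0$ of Arason--Pfister type, which ensures that the $\eta$-adic filtration on $K^{MW}_0(k)\comp{2}$ coincides with the fundamental ideal filtration on $GW(k)\comp{2}$ and truncates at the cohomological dimension. Once these identifications are in hand, the bookkeeping above leaves no room for any nontrivial $d_r$ exiting the $1$-column for $r \ge 2$.
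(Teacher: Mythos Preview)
Your approach is genuinely different from the paper's. The paper argues that the source of the one potential $d_2$ (namely $\Tor(\ZZ/2\{\alpha_1\},\pi_{-\alpha}\M\ZZ_2)$), when nonzero, detects the simplicial Hopf map $\eta_s$ modulo higher filtration; since $\eta_s\ne 0$ in $\pi_1\one\comp{2}$ by Lemma~\ref{lemma:topmot}, that class must be an infinite cycle. You instead try to rule out the $d_2$ by comparing the $0$-column of $E_2$ with the associated graded of $GW(k)\comp{2}$ for the fundamental ideal filtration, via Morel's theorem and the Milnor conjecture. This is an attractive structural alternative and, once made rigorous, yields the stronger statement that the MANSS filtration on $\pi_0\one\comp{2}$ is the $I$-adic one.

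However, as written there is a real gap. A pure ``size'' comparison $|E_\infty|=|E_2|$ only works for finite groups, and $K^M_2(k)/2$ need not be finite (e.g.\ over a nonreal number field). More seriously, convergence only tells you that the $E_\infty$ $0$-column is the associated graded for the \emph{MANSS} filtration $F^\ast$, whereas you are matching $E_2$ against the associated graded for the \emph{$I$-adic} filtration. Your final paragraph argues that the $\eta$-adic and $I$-adic filtrations agree, which is correct, but that only gives $I^s\subseteq F^s$; you have not shown $F^s\subseteq I^s$. Without that, two different finite filtrations on an infinite group can have isomorphic associated gradeds term by term, so nothing prevents the incoming $d_2$ from being nonzero. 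The repair is to use multiplicativity directly rather than a counting argument: the edge map $GW(k)\comp{2}\to E_\infty^{0,0}=\ZZ_2$ is the rank homomorphism, so $F^1=I$; then $[u]\eta$ is detected by $[u]\alpha_1$ (since $[u]$ has filtration $0$ and $\eta$ is detected by $\alpha_1$), so the map $F^1\to E_\infty^{1,1}$ agrees with $I\to I/I^2$, forcing $F^2=I^2$ and hence $E_\infty^{2,2}=I^2\cong K^M_2(k)/2=E_2^{2,2}$. (A small slip along the way: $E_2^{0,0}=\ZZ_2\{1\}\otimes\pi_0\M\ZZ_2\cong\ZZ_2$, not $K^M_0(k)/2=\ZZ/2$.)
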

\begin{proof}
By Proposition \ref{prop:E2}, specifically the form of the 0- and
1-columns, there is only one potentially nontrivial differential,
\[
  d_2: \Tor(\ZZ/2\{\alpha_1\},\pi_{-\alpha}\MZ_2)\to
  \ZZ/2\{\alpha_1^2\}\otimes \pi_{-2\alpha}\MZ_2
\]
from the $(1,0)$-coordinate to the $(0,2)$-coordinate.  We argue that
the source of this differential is either $0$ or
represents $\eta_s$ (modulo higher filtration elements).  By Lemma
\ref{lemma:topmot}, this enough to conclude that the above $d_2$ is
trivial.

Recall that $\tau$ is the generator of $\pi_{1-\alpha}\M\FF_2$.  It
follows from \cite[p.968]{MorelMASS} that $h_1 \tau$ is
an infinite cycle in the motivic Adams spectral sequence.  As such, we
have a map $\pi_{1-\alpha}\M\FF_2\to \pi_1\one\comp{2}$ given by
multiplication by $\eta$.  By Lemma \ref{lemma:topmot}, this map is
injective since $h_1 \tau$ represents $\eta_s = h_1\tau+h_2 \rho^2$ over any field in which
$\rho^2=0$.

The standard cofiber sequence
$\M\ZZ_2\xrightarrow{2} \M\ZZ_2 \to \M\FF_2$ induce a short exact
sequence
\[
  0\to \ZZ/2\otimes \pi_{1-\alpha}\M\ZZ_2\to \pi_{1-\alpha}\M\FF_2\to
  \Tor(\ZZ/2,\pi_{-\alpha}\M\ZZ_2)\to 0.
\]
(Note that, depending on the base field, either the first term is
$\ZZ/2$ and the last is $0$, or vice versa.) In order for the motivic
Adams-Novikov spectral sequence to detect
the image of $h_1 \tau$ in $\pi_1\one\comp{2}$, the above must be an
extension in the motivic ANSS.  Since $h_2$ corresponds to $\nu$ which
in turn corresponds to $\alpha_{2/2}$, whenever
$\Tor(\ZZ/2\{\alpha\},\pi_{-\alpha}\M\ZZ_2)$ is nonzero it must detect
$\eta_s$ mod higher filtration, hence the $d_2$ in question is trivial.
\end{proof}

\begin{prop} \label{prop:triv_d2}
Over a $2$-low-dimensional field, the MANSS differential
\[
  d_2:\Tor(\ZZ/4\{\alpha_{2/2}\},\pi_{-2\alpha}\MZ_2)\to
  \ZZ/2\{\alpha_1^2\}\otimes \pi_{1-2\alpha}\MZ_2
\]
from the $(2,0)$- to $(1,2)$-coordinate is trivial.
\end{prop}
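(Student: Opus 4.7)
My plan is to realize the Tor class as a Massey product in $E_2$ and apply Moss's derivation formula to conclude that $d_2$ must vanish.

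First, I identify the source. By Theorem \ref{thm:UCT} and Corollary \ref{cor:BP}, $\Tor(\ZZ/4\{\alpha_{2/2}\},\pi_{-2\alpha}\MZ_2)$ is canonically the $4$-torsion subgroup of $K^M_2(k)\comp{2}\cong\pi_{-2\alpha}\MZ_2$. Given a $4$-torsion element $x$, pick a cobar cochain $y\in C^0(\BP^\top)$ with $d(y)=4[\alpha_{2/2}]$; this is possible since $\alpha_{2/2}$ has order $4$ in ${}^\top E_2^{1,4}$. The resulting cocycle $y\otimes x\in C^0(\BP^\top)\otimes_{\ZZ_2}\pi_{-2\alpha}\MZ_2$ represents the Tor class, and its cohomology class coincides with the Massey product $\langle\alpha_{2/2},4,x\rangle$ at the $E_2$-page.

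Next, each of the three entries is a $d_2$-cycle. The class $\alpha_{2/2}$ is a permanent cycle because it detects $\nu\in\pi_{1+2\alpha}\one$, which is nontrivial by the $\Sq^4$-on-$\HHPP^2$ argument recalled in the introduction. The class $4$ is a scalar. For $x\in E_2^{0,-2\alpha}$, the potential target $E_2^{2,1-2\alpha}$ of $d_2(x)$ vanishes: running through Theorem \ref{thm:UCT} together with Lemma \ref{lemma:vanish}, every candidate tensor or Tor contribution demands a Galois cohomology group in degree strictly exceeding $\cd_2 k\le 2$. Moss's derivation formula then forces $d_2\langle\alpha_{2/2},4,x\rangle$ to sit inside the Massey product indeterminacy, which at the $E_2$-level is concentrated in subgroups of the form $\alpha_{2/2}\cdot E_2 + E_2\cdot x$ at the appropriate bidegrees.

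The remaining task is to show these indeterminacy subgroups land trivially in the target $E_2^{2,3+0\alpha}$, which I would handle via a direct bidegree count against Table \ref{table:E2} and the coefficient description $\pi_\star\BP\comp{2}=(\pi_\star\MZ_2)[v_1,v_2,\ldots]$ from Corollary \ref{cor:BP}. This bookkeeping is the main obstacle: the Massey product indeterminacy spreads across several auxiliary bidegrees that must each be verified to vanish or map trivially under the relevant multiplication. Should the Moss approach prove too unwieldy, I would fall back to analyzing the target directly --- identifying classes in $\alpha_1^2\otimes\pi_{1-2\alpha}\MZ_2$ as detecting nonzero elements of $\pi_1\one\comp{2}$ (via comparison with Hermitian or algebraic $K$-theory, in the spirit of Lemma \ref{lemma:topmot}), thereby precluding a nontrivial $d_2$ from landing there.
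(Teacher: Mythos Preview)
Your Massey-product strategy is genuinely different from the paper's, but it has a gap more serious than the indeterminacy bookkeeping you acknowledge. There is no general ``Moss derivation formula'' that computes $d_r$ on an $E_2$-Massey product $\langle a,b,c\rangle$ purely from $d_r a$, $d_r b$, $d_r c$; the Massey product here is built from the cobar-complex nullhomotopy $v_1^2$ (witnessing $4\alpha_{2/2}=0$), which is not itself an $E_2$-class, so the Leibniz rule gives no grip on it. Moss's actual convergence theorem runs in the other direction---from Toda brackets in homotopy to permanent cycles inside Massey products---and would require $4\nu=0$ in $\pi_\star\one\comp{2}$, which fails by Proposition~\ref{prop:4nu}. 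Indeed, these very Tor classes support a nontrivial $d_3$ in Proposition~\ref{prop:d3}, so they are certainly not permanent and one needs an argument specific to $d_2$. Your fallback (showing the target detects nonzero homotopy via $\KO$ or $\KGL$) is more promising, but you would have to execute it without invoking the identification in Proposition~\ref{prop:UCT}, which logically comes later.

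The paper's proof is entirely different and avoids both Massey products and $K$-theory comparisons. It observes that the multiplication
\[
  (\ZZ_2\{1\}\otimes \pi_{-\alpha}\MZ_2)\otimes \Tor(\ZZ/4\{\alpha_{2/2}\},\pi_{-\alpha}\MZ_2) \longrightarrow \Tor(\ZZ/4\{\alpha_{2/2}\},\pi_{-2\alpha}\MZ_2)
\]
is surjective with the left factor permanent, so by the ordinary Leibniz rule it suffices to control $d_2$ on $\Tor(\ZZ/4\{\alpha_{2/2}\},\pi_{-\alpha}\MZ_2)$. The decisive input is then a one-line arithmetic dichotomy about the base field: if $\pi_{-\alpha}\MZ_2\cong\varprojlim_r k^\times/(k^\times)^{2^r}$ has any $2$-torsion (so that this Tor group is nonzero), then $\pi_{1-\alpha}\MZ_2\cong\varprojlim_r\mu_{2^r}(k)$ must vanish---hence the relevant summand of the target is already zero. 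No secondary operations enter at all.
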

\begin{proof}
We invoke the multiplicative structure of the MANSS to prove this
result.  Let $1$ denote the class of the identity map and note that the multiplication
\[
  (\ZZ_2\{1\}\otimes \pi_{-\alpha}\MZ_2)\otimes
  \Tor(\ZZ/4\{\alpha_{2/2}\},\pi_{-\alpha}\MZ_2) \to \Tor(\ZZ/4\{\alpha_{2/2}\},\pi_{-2\alpha}\MZ_2)
\]
is a surjection (this can be checked on cobar representatives), 
and the classes in $\ZZ_2\{1\}\otimes \pi_{-\alpha}\MZ_2$ are permanent.  
Thus to determine the differential of the proposition we can analyze $d_2$ on
$\Tor(\ZZ/4\{\alpha_{2/2}\},\pi_{-\alpha}\MZ_2)$.  By Theorem
\ref{thm:UCT}, the target of this $d_2$ is
$(\ZZ/2\{\alpha_1^2\}\otimes \pi_{1-\alpha}\MZ_2)\oplus
\Tor(\ZZ/2\{\alpha_1^3\},\pi_{-2\alpha}\MZ_2)$.  Multiplying by
$\ZZ_2\{1\}\otimes \pi_{-\alpha}\MZ_2$, we see that the proposition
follows as long as $d_2$ on
\[
  \Tor(\ZZ/4\{\alpha_{2/2}\},\pi_{-\alpha}\MZ_2)
\]
composed with
projection onto $\ZZ/2\{\alpha_1^2\}\otimes \pi_{1-\alpha}\MZ_2$ is
trivial.  (Since $k$ is $2$-low-dimensional, the multiplication on 
$(\ZZ_2\{1\}\otimes\pi_{-\alpha}\MZ_2)\otimes\Tor(\ZZ/2\{\alpha_1^3\},\pi_{-2\alpha}\MZ_2)$ is trivial.)

We now make the following claim about the coefficients of $\MZ_2$:
\begin{equation} \label{eqn:claim}
\text{If }
\pi_{1-\alpha}\MZ_2\ne 0\text{, then }\pi_{-\alpha}\MZ_2\cong
\varprojlim_r k^\times/(k^\times)^{2^r}\text{ is
2-torsion-free.}
\end{equation}
This claim follows from the isomorphism
\[
  \pi_{1-\alpha}\M\ZZ_2 \cong \varprojlim_r H^0_{\text{\'{e}t}}(\Spec k;
  \mu_{2^r})
\]
and direct computation of the limit on the right-hand side.

By (\ref{eqn:claim}), we now see that whenever
$\Tor(\ZZ/4\{\alpha_{2/2}\},\pi_{-\alpha}\MZ_2)$ is nontrivial, the
group $\ZZ/2\{\alpha_1^2\}\otimes \pi_{1-\alpha}\MZ_2$ vanishes, proving that
the $d_2$-differential in question is trivial.
\end{proof}

\begin{prop} \label{prop:4nu}

Over a field $k$ with $p \ne 2$ and $\cd_2 k(i) < \infty$, we have the relation 
$4\nu = \eta^2\eta_s$ in $\pi_{1+2\alpha}\one\comp{2}$.
\end{prop}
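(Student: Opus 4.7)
The plan is to prove the $2$-primary statement $4\nu=\eta^2\eta_s$ by reducing to a hidden multiplicative extension in the MANSS and then invoking the Toda bracket machinery of Dugger--Isaksen \cite{DDDI}. The MANSS converges to $\pi_\star\one\comp{2}$ under the hypothesis $\cd_2 k(i)<\infty$ by Theorem \ref{thm:MANSS}, so working in MANSS is legitimate.

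First I would identify the detectors of the classes in play. The quaternionic Hopf map $\nu$ is detected in filtration $1$ by $\alpha_{2/2}\in{}^\top E_2^{1,4}$, which generates a $\ZZ/4$-summand, so $4\alpha_{2/2}=0$ at $E_2$ and $4\nu$ is represented in filtration at least $3$. A bookkeeping argument using Theorem \ref{thm:UCT}, the shape of ${}^\top E_2$, and the vanishing range of Lemma \ref{lemma:vanish} shows that in tri-degree $(3,1+2\alpha)$ the unique candidate detector is $\alpha_1^3\otimes\tau$, where $\tau\in\pi_{1-\alpha}\MZ_2$ is the canonical generator. Meanwhile $\eta$ is detected by $\alpha_1$, and (by the argument of Proposition \ref{prop:perm}) the simplicial Hopf map $\eta_s$ is detected modulo higher filtration by $\alpha_1\otimes\tau$, so $\eta^2\eta_s$ is also detected by $\alpha_1^3\otimes\tau$. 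Hence the proposition is equivalent to the nontriviality of a specific hidden extension from filtration $1$ to filtration $3$.

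Next I would produce this hidden extension via a Toda bracket. Using the motivic vanishing $\eta\nu=0$ from \cite{DDDI} and $2\eta_s=0$ from Lemma \ref{lemma:topmot}, a suitable bracket (a motivic lift of the classical Massey product witnessing $\eta^3=4\nu$ in topological stable stems) is defined; Moss convergence for the MANSS identifies it with the class detected by $\alpha_1^3\otimes\tau$, which represents $4\nu$ (via the filtration drop forced by $4\alpha_{2/2}=0$) and $\eta^2\eta_s$ (by its multiplicative formation) simultaneously. The desired equality follows once the indeterminacy of this bracket is shown to vanish.

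The chief obstacle is the indeterminacy computation: the bracket carries a priori contributions from $\rho$-multiples and higher-filtration hidden extensions. Dugger--Isaksen's analysis in \cite{DDDI} pins down precisely which correction terms can occur in this family of brackets, so the remaining work is to verify that under the hypothesis $\cd_2 k(i)<\infty$ all such contributions vanish in tri-degree $1+2\alpha$, using the explicit form of $\pi_\star\MZ_2$. This bookkeeping is routine once the bracket calculus of \cite{DDDI} is in hand, and it completes the identification $4\nu=\eta^2\eta_s$.
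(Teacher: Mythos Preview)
Your approach differs substantially from the paper's and has a genuine gap in generality.

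The paper's argument is a short deduction in the motivic \emph{Adams} spectral sequence, not the MANSS. Dugger--Isaksen's cobar relation $h_0^2 h_2 = \tau h_1^3$ holds in the MASS $E_2$-page over any base field (and in any characteristic $\ne 2$ by \cite{HoyKO}). Over the prime field one knows that $h_0$ detects $2+\rho\eta$, $h_1$ detects $\eta$, $h_2$ detects $\nu$, and $h_1\tau + h_2\rho^2$ detects $\eta_s$; substituting and using $\eta\nu=0$ gives $4\nu = \eta^2(\eta_s - \nu\rho^2) = \eta^2\eta_s$ directly. No Toda brackets, no Moss convergence, no MANSS hidden-extension analysis is needed.

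Your strategy instead works entirely inside the MANSS and invokes Theorem~\ref{thm:UCT} together with Lemma~\ref{lemma:vanish} to isolate $\alpha_1^3\otimes\tau$ as the unique possible detector of $4\nu$ in filtration $3$. But Theorem~\ref{thm:UCT} is proved only for $\ell$-low-dimensional fields, whereas the proposition is stated for arbitrary $k$ with $p\ne 2$ and $\cd_2 k(i)<\infty$ --- a strictly weaker hypothesis (e.g.\ $\QQ$ and $\RR$ satisfy it but are not $2$-low-dimensional). Over such fields the MANSS $E_2$-term need not decompose as in (\ref{eqn:UCT}), so your uniqueness-of-detector step fails in the stated generality.

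Even after restricting to low-dimensional $k$, the Toda-bracket portion is not a proof: you never name the bracket, you assume a Moss-type convergence theorem for the MANSS without citation, and the indeterminacy check is deferred to ``routine bookkeeping''. The paper's MASS route avoids all of this because the key relation is an honest $E_2$-identity rather than a hidden extension to be produced.
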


\begin{rmk} \label{rmk:4nu}
Proposition \ref{prop:4nu} is a $2$-adic motivic version of the relation $12\nu = \eta^3$ in the topological $\pi_3S$.  
This has ramifications in the first motivic stable stem:  
First,  
identify $\ZZ/2\{\alpha_1^3\}\otimes \pi_{1-2\alpha}\MZ_2$ appearing
in the weight one MANSS with 
$\pi_{1-2\alpha}\M\FF_2\{\alpha_1^3\} =
(K^M_1(k)/2)\{\tau\alpha_1^3\}$.  (See \S\ref{subsec:wt1} for more
information on this spectral sequence.)  
Then we have that
\begin{equation} 
\label{eqn:4nu}
4[u,v]\alpha_{2/2} 
= 
[u,v]\tau\alpha_1^3
\end{equation}
in the $1$-column of the MANSS.
\end{rmk}

\begin{proof}
Dugger and Isaksen \cite{DI} observe that $h_0^2h_2 = \tau h_1^3$ 
in the $E_2$-page of the motivic Adams spectral sequence.  (Their 
computations are done over the base field $\CC$, but this particular 
relation may be checked in the cobar complex.  The computation of
$\M\FF_2^\star\M\FF_2$ in \cite{HoyKO} implies 
that the argument works in characteristic $p>2$.)  Over 
the prime field of $k$, it is known that $h_0$ represents $2+\rho\eta$, 
$h_1$ represents $\eta$, $h_1\tau+h_2\rho^2$ represents $\eta_s$ (the simplicial 
Hopf element), and $h_2$ represents $\nu$.  Dugger and Isaksen
\cite{DDDI} also have a 
geometric argument showing that $\eta \nu = 0$, 
whence their relation in the MASS becomes $4\nu =
\eta^2(\eta_s-\nu\rho^2) = \eta^2\eta_s$.
\end{proof}

\begin{prop} \label{prop:d2triv}
The MANSS differential
\[
  d_2: \begin{array}{c} \Tor(\ZZ/2\{\alpha_1^2\},\pi_{1-2\alpha}\MZ_2)\\
  \oplus \\ \ZZ/2\{\alpha_{2/2}\}\otimes \pi_{1-2\alpha}\MZ_2 \end{array} \to
\ZZ/2\{\alpha_1^3\}\otimes \pi_{1-3\alpha}\MZ_2
\]
from the $(2,1)$-coordinate to the $(1,3)$-coordinate is trivial.  In
fact, the source of this differential is permanent in
the MANSS.
\end{prop}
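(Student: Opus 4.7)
The plan is to exhibit every class in the source as a permanent cycle; this immediately forces both the stated $d_2$ and all higher differentials out of the source to vanish.

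For the tensor summand $\ZZ/2\{\alpha_{2/2}\}\otimes\pi_{1-2\alpha}\MZ_2$, each class factors multiplicatively as a product $x\cdot\alpha_{2/2}$ with $x$ in the zero-column of the MANSS at filtration zero. I would first check that $x$ is permanent: the potential $d_r$-targets of a zero-column, filtration-zero class at the bidegree of $x$ are eliminated by the low-dimensional vanishing range in Lemma \ref{lemma:vanish} applied to $\pi_\star\MZ_2$. The class $\alpha_{2/2}$ detects the motivic Hopf map $\nu\in\pi_{1+2\alpha}\one\comp{2}$, which is a genuine motivic stable map and therefore a permanent cycle in the MANSS. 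By multiplicativity of the MANSS, $x\cdot\alpha_{2/2}$ is permanent.

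For the Tor summand $\Tor(\ZZ/2\{\alpha_1^2\},\pi_{1-2\alpha}\MZ_2)$, I would invoke the hidden extension of Proposition \ref{prop:4nu} and Remark \ref{rmk:4nu} in the appropriate tri-degree. Transplanting the identity $4[u,v]\alpha_{2/2}=[u,v]\tau\alpha_1^3$ to the $2$-column source identifies each Tor class, up to the appropriate filtration jump, with four times a class in the already-permanent tensor summand. Since multiples of permanent cycles are permanent, the Tor classes are permanent as well. Combined with Proposition \ref{prop:perm}, which shows the $1$-column target admits no outgoing differentials, we conclude that no nontrivial $d_r$ with $r\geq 2$ exits the source; in particular the stated $d_2$ vanishes.

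The main obstacle is the transplantation of Remark \ref{rmk:4nu} to the $2$-column: one must reproduce the cobar-level identity $h_0^2h_2=\tau h_1^3$ and the geometric vanishing $\eta\nu=0$ in the shifted tri-degree, then verify that each Tor class really is witnessed as a filtration-shifted multiple of a class in the tensor summand under the resulting extension. Once this matching is justified, the multiplicative and topological inputs combine to force all outgoing differentials from the source to be trivial.
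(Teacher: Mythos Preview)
Your argument for the tensor summand $\ZZ/4\{\alpha_{2/2}\}\otimes\pi_{1-2\alpha}\MZ_2$ is essentially the same as the paper's: both factor the classes as $x\cdot\alpha_{2/2}$ and show each factor is an infinite cycle. (The paper argues that $\alpha_{2/2}$ is an infinite cycle by observing that its only possible targets, $K^M_1(k)/2\{\alpha_1^3\}$ and $K^M_2(k)/2\{\alpha_1^3\}$, must survive by Morel's computation of $\pi_{*\alpha}\one$; your appeal to $\nu$ being a genuine map works once one knows $\alpha_{2/2}$ actually detects $\nu$, which the paper establishes elsewhere. Also, your $x$ sits in the $1-2\alpha$-column, not the zero-column.)

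The genuine gap is your treatment of the $\Tor$ summand. The hidden extension $4[u,v]\alpha_{2/2}=[u,v]\tau\alpha_1^3$ of Remark~\ref{rmk:4nu} relates a filtration-$1$ class to a filtration-$3$ class in the $1$-column; it does \emph{not} exhibit the $\Tor$ classes in the $(2,1)$-position as multiples of tensor-summand classes. Both summands in the source lie at filtration $s=1$, so ``four times a tensor class'' is zero in $E_2$, while in homotopy $4$ times the class detected by the tensor summand is detected (if at all) in filtration $\ge 3$, not by the $\Tor$ group at $s=1$. There is no route from the $4\nu=\eta^2\eta_s$ relation to the permanence of $\Tor(\ZZ/2\{\alpha_1^2\},\pi_{1-2\alpha}\MZ_2)$.

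The paper proceeds quite differently on this summand: it observes that $\Tor(\ZZ/2,\pi_{1-2\alpha}\MZ_2)$ is either $0$ or $\ZZ/2$ (depending on whether $\mu_{2^\infty}(k)$ is infinite or finite), and in the nontrivial case the generator detects $\eta_s^2$ modulo higher filtration, by the same mechanism as in Proposition~\ref{prop:perm}. Since $\eta_s^2\neq 0$ in $\pi_2\KGL$ over $\Spec\ZZ$, the class is permanent. Your invocation of Proposition~\ref{prop:perm} at the end is also off-target: that result controls differentials \emph{leaving} the $1$-column, whereas here the $1$-column is the \emph{target}; a target being an infinite cycle does not prevent it from being hit.
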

\begin{proof}
We show that $d_2$ is trivial on each summand.  
Note that the $\Tor$-term is either $0$ or $\ZZ/2$ depending on whether $\mu_{2^\infty}(k)$ is infinite or finite, 
respectively.  
In the latter case, by an argument similar to Proposition
\ref{prop:perm}'s, 
this group is $\ZZ/2$ and represents $\eta_s^2$ mod higher
filtration.  Since $\eta_s^2\ne 0\in \pi_2 \KGL$ over $\ZZ$, we can
conclude that this class is permanent.

Now note that $\pi_{1-2\alpha}\MZ_2$ is permanent in the MANSS by
dimensional accounting.  Meanwhile, $\alpha_{2/2}$ is permanent
because its potential targets (via $d_2$ and $d_3$) are $K^M_1(k)/2\{\alpha_1^3\}$
and $K^M_2(k)/2\{\alpha_1^3\}$.  The potential targets survive via
Morel's calculation of the $0$-line $\pi_{*\alpha}\one$.  By the Leibniz rule, we find that the
second summand is permanent.
\end{proof}

In order to state the following proposition, we need to know some
facts about $\Tor(\ZZ/4\{\alpha_{2/2}\},\pi_{-2\alpha}\M\ZZ_2)$ in the
MANSS.  For $a,b\in k^\times$, let $s(a,b)$ be the sum of Milnor symbols $[a,a+1]+[b,b^2+1]$.  By
results of Browkin \cite[Theorem 4.2]{Browkin},
\[
  \Tor(\ZZ/4,K^M_2(k)) = \{s(a,b) \mid a,b\in k^\times\} \cong \Tor(\ZZ/4,\pi_{-2\alpha}\M\ZZ_2).
\]
By the proof of the universal coefficient theorem, cobar
representatives of the elements in
$\Tor(\ZZ/4\{\alpha_{2/2}\},\pi_{-2\alpha}\M\ZZ_2)$ are of the form
$f\otimes s(a,b)$, where $f$ satisfies $d_1 f =
4\tilde{\alpha}_{2/2}$ and $\tilde{\alpha}_{2/2}$ is a cobar
representative of $\alpha_{2/2}$ in the topological ANSS.  A rote
calculation shows that we may take $f = v_1^2$.  We define
$\sigma_{a,b}\alpha_{2/2}$ to be the MANSS $E_2$ element
represented by $f\otimes s(a,b)$.

\begin{prop} \label{prop:d3}
The MANSS differential
\[
  d_3:\Tor(\ZZ/4\{\alpha_{2/2}\},\pi_{-2\alpha}\M\ZZ_2)\to
  \ZZ/2\{\alpha_1^3\}\otimes \pi_{1-3\alpha}\MZ_2
\]
takes the form
\[
  d_3(\sigma_{a,b}\alpha_{2/2}) = s(a,b)\tau\alpha_1^3.
\]
\end{prop}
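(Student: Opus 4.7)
The plan is to compute $d_3(\sigma_{a,b}\alpha_{2/2})$ by combining the explicit cobar representative $v_1^2\otimes s(a,b)$ with the hidden extension $4\nu=\eta^2\eta_s$ from Proposition \ref{prop:4nu}, interpreted through a Massey-product-style differential.

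First I would record that, in the topological cobar $C^*(\BP^\top)$, the $0$-cochain $v_1^2$ satisfies $d(v_1^2)=4\tilde{\alpha}_{2/2}$ for some cobar $1$-cocycle $\tilde{\alpha}_{2/2}$ representing $\alpha_{2/2}$. Because $4\cdot s(a,b)=0$ in $\pi_{-2\alpha}\MZ_2$ by Browkin's theorem, the tensor $v_1^2\otimes s(a,b)$ is a $0$-cocycle in the motivic cobar $C^*(\BP)=C^*(\BP^\top)\otimes_{\ZZ_2}\pi_\star\MZ_2$, and it represents the Tor class $\sigma_{a,b}\alpha_{2/2}\in E_2^{0,2+0\alpha}$. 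This exhibits $\sigma_{a,b}\alpha_{2/2}$ as the matric Massey product $\langle s(a,b),4,\alpha_{2/2}\rangle$, with $v_1^2$ serving as the witness for the vanishing of $4\alpha_{2/2}$ on the topological $E_2$-page.

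Next I would leverage Remark \ref{rmk:4nu}. The hidden relation $4\alpha_{2/2}=\tau\alpha_1^3$ in the $1$-column at weight $2$, which follows from $4\nu=\eta^2\eta_s$, admits a cochain-level realization through a filtration-$3$ correction whose cobar boundary exhibits the hidden extension. Tensoring with $s(a,b)$ and applying the Massey-product Leibniz identity for MANSS differentials yields
\[
d_3(\sigma_{a,b}\alpha_{2/2}) = s(a,b)\,\tau\alpha_1^3,
\]
with the source at MANSS filtration $0$ and the target at filtration $3$ in stem $1$, matching the $1$-column entry of Table \ref{table:E2}.

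The main obstacle is the rigorous extraction of the MANSS $d_3$-differential from the $\pi_\star$-level hidden extension of Proposition \ref{prop:4nu}. One must either construct an explicit cochain-level witness of $4\nu-\eta^2\eta_s=0$ and track its behavior under tensoring with $s(a,b)$, or invoke a suitably formal Massey-product Leibniz identity. A cleaner alternative uses the cofiber sequence $\MZ_2\xrightarrow{4}\MZ_2\to\MZ/4$: the associated Bockstein on MANSS $E_2$-pages sends $\sigma_{a,b}\alpha_{2/2}$ to a primary class in the MANSS for $\one\smsh\MZ/4$, whose own low-page differential can be read off directly from the cobar together with Proposition \ref{prop:4nu}, thereby converting the otherwise subtle $d_3$ into a visible boundary computation.
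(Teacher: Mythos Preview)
Your proposal identifies the right ingredients---the cobar representative $v_1^2\otimes s(a,b)$ and the relation $4\nu=\eta^2\eta_s$---but the central step is not justified. You invoke a ``Massey-product Leibniz identity for MANSS differentials'' to convert the hidden extension $4\alpha_{2/2}=\tau\alpha_1^3$ into a $d_3$, yet no such identity is stated or proved, and the passage from a $\pi_\star$-level hidden extension to a spectral-sequence differential is exactly the delicate point. A hidden extension is a feature of $E_\infty$; a $d_3$ is a structural feature of $E_3$. There is no general mechanism for reading one off the other without an auxiliary construction, and you acknowledge this gap yourself. Your alternative involving ``the MANSS for $\one\smsh\MZ/4$'' gestures in a better direction, but $\one\smsh\MZ/4$ is not the relevant object, and the cofiber sequence $\MZ_2\xrightarrow{4}\MZ_2\to\MZ/4$ of Eilenberg--MacLane spectra does not directly produce a comparison of Adams--Novikov spectral sequences for $\one$.

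The paper supplies exactly the missing auxiliary construction: it passes to the MANSS for the mod-$2$ Moore spectrum $\one/2$. Over a low-dimensional field one has $E_2(\one/2)={}^\top E_2(\one/2)\otimes_{\FF_2}\pi_\star\M\FF_2$, which contains $\pi_\star\M\FF_2[v_1,h_0]$ as a summand, and here $v_1^2$ is a genuine primary class rather than a $\Tor$ class. Since $\tau h_0^3$ detects $\red(\eta^2\eta_s)=\red(4\nu)=0$ in $\pi_\star\one/2$, it must be a boundary; the only possible source is $v_1^2$, forcing $d_3(v_1^2)=\tau h_0^3$. Multiplying by the permanent cycle $s(a,b)$ gives $d_3(s(a,b)v_1^2)=s(a,b)\tau h_0^3$ in the $\one/2$ MANSS. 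Because the reduction map $\red$ is a map of spectral sequences and $\sigma_{a,b}\alpha_{2/2}$ reduces to $s(a,b)v_1^2$, one obtains $\red(d_3\sigma_{a,b}\alpha_{2/2})=s(a,b)\tau h_0^3$, which forces $d_3\sigma_{a,b}\alpha_{2/2}=s(a,b)\tau\alpha_1^3$ since the target is $2$-torsion. The detour through $\one/2$ is precisely what converts your heuristic into a proof.
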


\begin{proof}
The proof proceeds by analysis of the MANSS for the mod-$2$ Moore spectrum $\one/2$, 
i.e., 
the cofiber of $\one\xrightarrow{2}\one$.  The MANSS $E_2$ for
$\one/2$ over a low-dimensional field takes a particularly simple form.  
Via the slice spectral sequence and a simple change of base, 
we see that
\[
E_2(\one/2) 
= 
{}^\top E_2(\one/2)\otimes_{\FF_2}\pi_\star \M\FF_2.
\]
By \cite[Theorem 5.3.13(a)]{Ravenelbook}, 
this $E_2$-term contains 
\[
  \pi_\star\M\FF_2[v_1,h_0]
\]
as a direct summand where $v_1$ has Adams degree $1+\alpha$ and
homological degree $0$ and $h_0$ has Adams degree $\alpha$ and
homological degree $1$.  We know that the mod $2$ reduction map $\red$ induces a map 
of spectral sequences which takes $\alpha_1$ to $h_0$.  

Since $\eta^2\eta_s = 4\nu$, 
we know that $\red(\tau\alpha_1^3) = \tau h_0^3$ represents a trivial class in $\pi_\star \one/2$.  
It follows that $\tau h_0^3$ dies in the MANSS for $\one/2$.  
The only way this can happen is if
\[
d_3 v_1^2 
= 
\tau h_0^3
\]
whence
\[
d_3 [u,v]v_1^2 
=
 [u,v]\tau h_0^3
\]
for all $[u,v]\in K^M_2(k)/2$.  
(The symbols $[u,v]$ are permanent by Morel's computation of $\pi_{*\alpha}\one$ and the cofiber sequence defining $\one/2$.)

To determine the value of $d_3$ in the MANSS for $\one$ on $\sigma_{s(a,b)}\alpha_{2/2}$, 
we look at its value under the reduction map $\red$ to $\one/2$.
Since $\sigma_{a,b}\alpha_{2/2}$ is represented by
$s(a,b)v_1^2$ in the cobar complex we find that
\[\begin{aligned}
  \red d_3\sigma_{a,b}\alpha_{2/2} &= d_3 s(a,b)v_1^2\\
  &= s(a,b)\tau h_0^3.
\end{aligned}\]
This is only possible if
\[
  d_3\sigma_{a,b}\alpha_{2/2} \equiv s(a,b)\tau \alpha_1^3\pmod{2}.
\]
Since $2\alpha_1=0$, this implies that
\[
  d_3\sigma_{a,b}\alpha_{2/2} = s(a,b)\tau\alpha_1^3,
\]
which is the claimed differential.
\end{proof}

\begin{rmk}
The standard relations in Milnor-Witt $K$-theory imply that the class 
$\rho[u]\tau\alpha_1^3$ must die in the MANSS.  Indeed, since
$(2+\rho\eta)\eta = 0$, it follows that $\rho\eta^2\eta_s =
-2\eta\eta_s = 0$ since $2\eta_s=0$.  It is fascinating to note that the differential in
Proposition \ref{prop:d3} witnesses torsion on $\eta^2\eta_s$ that is
not a trivial consequence of its module structure over Milnor-Witt $K$-theory.
\end{rmk}

The preceding propositions account for all possible MANSS differentials which affect the $1$-column.  
It follows that the $1$-column of the $E_\infty$-page is exactly that of Table \ref{table:E2} with the exception that 
$\ZZ/2\{\alpha_1^3\}\otimes \pi_{1-3\alpha}\MZ_2$ gets replaced by
\[
  K^M_2(k)/(2,\Tor(\ZZ/4,K^M_2(k)))\{\tau\alpha_1^3\},
\] 
where $\eta^2\eta_s$ is represented by $\tau\alpha_1^3$.  
In order to determine $\pi_1\one\comp{2}$, 
it only remains to solve the extension problems.

\begin{prop} \label{prop:Kmod8}
There is a hidden extension joining $\ZZ/4\{\alpha_{2/2}\}\otimes
\pi_{-2\alpha}\MZ_2$ and
\[
  K^M_2(k)/(2,\Tor(\ZZ/4,K^M_2(k)))
\]
so that these
groups represent $(K^M_2(k)/8)\{\nu\}$
in $\pi_1\one\comp{2}$.
\end{prop}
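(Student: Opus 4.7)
The plan is to construct a surjection $K^M_2(k)/8 \twoheadrightarrow F_2/F_4$ sending $[u,v]$ to $[u,v]\nu$, show it is injective, and then identify the quotient filtration with the two $E_\infty$ terms named in the proposition. Here $F_s$ denotes the subgroup of $\pi_1\one\comp{2}$ consisting of classes of MANSS filtration at least $s$; by the cumulative work through Proposition \ref{prop:d3} the relevant quotients are $F_2/F_3 \cong (K^M_2(k)/4)\{\alpha_{2/2}\}$ and $F_3/F_4 \cong K^M_2(k)/(2,\Tor(\ZZ/4,K^M_2(k)))\{\tau\alpha_1^3\}$.

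First I would verify that the assignment $[u,v] \mapsto [u,v]\nu$ descends to $K^M_2(k)/8$. Proposition \ref{prop:4nu} gives $4\nu = \eta^2\eta_s$, and multiplying by $2$ together with $2\eta_s = 0$ (Lemma \ref{lemma:topmot}) yields $8\nu = 0$, so $8[u,v]\nu = 0$ in $\pi_1\one\comp{2}$. The image is contained in $F_2$ because $\nu$ is detected in MANSS filtration $2$ by $\alpha_{2/2}$.

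For surjectivity onto $F_2/F_4$, I would argue in two stages. Modulo $F_3$, the class $[u,v]\nu$ is detected by $[u,v]\alpha_{2/2}$, which exhausts $(K^M_2(k)/4)\{\alpha_{2/2}\} = F_2/F_3$. To reach the $F_3/F_4$ piece, apply the multiplicative structure: $4[u,v]\nu = [u,v]\eta^2\eta_s$ is detected by $[u,v]\tau\alpha_1^3$ via equation (\ref{eqn:4nu}) of Remark \ref{rmk:4nu}, so varying $[u,v]$ the image of $4 \cdot K^M_2(k)/8$ sweeps out all of $K^M_2(k)/(2,\Tor(\ZZ/4,K^M_2(k)))\{\tau\alpha_1^3\} = F_3/F_4$.

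For injectivity, suppose $[u,v]\nu = 0$ in $F_2$. The image in $F_2/F_3$ vanishes, so $[u,v] \in 4 K^M_2(k)$; write $[u,v] = 4[u',v']$. Then $[u,v]\nu = 4[u',v']\nu = [u',v']\eta^2\eta_s$ vanishes in $F_3/F_4$, forcing $[u',v'] \in 2K^M_2(k) + \Tor(\ZZ/4, K^M_2(k))$. Multiplying by $4$ kills the $\Tor$ summand, yielding $[u,v] = 4[u',v'] \in 8K^M_2(k)$, i.e., $[u,v] = 0$ in $K^M_2(k)/8$.

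The main obstacle is the bookkeeping in the surjectivity and injectivity steps: one must confirm that the precise form of $F_3/F_4$ produced by Proposition \ref{prop:d3} matches up with the image of multiplication by $4$ on lifts of $\alpha_{2/2}$-detected classes. This is where the relation (\ref{eqn:4nu}) --- a motivic incarnation of the classical $12\nu = \eta^3$ --- does all the genuine work, converting the Toda-bracket-style identity $4\nu = \eta^2\eta_s$ into a hidden $\times 2$ extension on each generator $[u,v]\nu$.
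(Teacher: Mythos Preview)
Your core idea---use the relation $4\nu=\eta^2\eta_s$ from Proposition~\ref{prop:4nu} to produce the hidden extension---is exactly what the paper uses, and your injectivity/surjectivity argument is a legitimate (and more explicit) way to package it.  However, there is a bookkeeping error in your filtration indices that needs correcting before the argument is complete.

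The class $\alpha_{2/2}$ lies in $\Ext^{1,4}$, so $\ZZ/4\{\alpha_{2/2}\}\otimes\pi_{-2\alpha}\MZ_2$ sits in Adams--Novikov filtration $s=1$, not $s=2$; and $\tau\alpha_1^3$ sits in filtration $s=3$.  In particular your asserted identifications of $F_2/F_3$ and $F_3/F_4$ are wrong: in filtration $2$ one finds instead $\ZZ/2\{\alpha_1^2\}\otimes\pi_{1-2\alpha}\MZ_2$ (the source of $K^M_1(k)/2\{\eta\eta_s\}$, see Proposition~\ref{prop:UCT}), and in filtration $1$ there are summands besides the $\alpha_{2/2}$ piece.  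So the target of your map is not $F_2/F_4$, and the two $E_\infty$ groups you are trying to splice are not consecutive associated-graded pieces.

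This does not break your argument, because the relation $4[u',v']\nu=[u',v']\eta^2\eta_s$ shows that any $4$-divisible multiple of $\nu$ jumps directly from filtration~$1$ detection to filtration~$3$ detection, bypassing filtration~$2$ entirely.  Once you note this, your injectivity and surjectivity steps go through verbatim for the map $K^M_2(k)/8\to\pi_1\one\comp{2}$ itself (rather than to a filtration quotient).  The paper's own proof avoids this issue by instead writing down the abstract short exact sequence
\[
0\to K^M_2(k)/(2,\Tor(\ZZ/4,K^M_2(k)))\to K^M_2(k)/8\to K^M_2(k)/4\to 0
\]
coming from the $\Tor$ long exact sequence for $0\to\ZZ/2\to\ZZ/8\to\ZZ/4\to 0$, and observing that the hidden extension $4\alpha_{2/2}=\tau\alpha_1^3$ realizes it.
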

\begin{proof}
In Proposition \ref{prop:4nu} we already observed that there is a
hidden extension
\[
  4\alpha_{2/2} = \tau \alpha_1^3.
\]
Now tensor the
short exact sequence
\[
  0 \to \ZZ/2\to \ZZ/8\to \ZZ/4\to 0
\]
with $K^M_2(k)$ and observe that there is an associated long exact
sequence
\[
  \cdots \to \Tor(\ZZ/4,K^M_2(k))\to K^M_2(k)/2 \to K^M_2(k)/8\to
  K^M_2(k)/4\to 0.
\]
It follows that
$K^M_2(k)/8$ sits in a short exact sequence
\[
  0\to K^M_2(k)/(2,\Tor(\ZZ/4,K^M_2(k))) \to K^M_2(k)/8\to
  K^M_2(k)/4\to 0.
\]
The relation $4\alpha_{2/2} = \tau \alpha_1^{3}$ implies that this is
an extension in the MANSS.
\end{proof}

We now record some further extensions in the 1-column of the MANSS.

\begin{prop} \label{prop:UCT}
There are extensions
\[
  0\to \ZZ/2\{\alpha_1\}\otimes \pi_{1-\alpha}\M\ZZ_2\to
\ZZ/2\{\eta_s\}\to \Tor(\ZZ/2\{\alpha_1\},\pi_{-\alpha}\M\ZZ_2)\to 0
\]
and
\[
  0\to \ZZ/2\{\alpha_1^2\}\otimes \pi_{1-2\alpha}\M\ZZ_2\to
  K^M_1(k)/2\{\eta\eta_s\}\to
  \Tor(\ZZ/2\{\alpha_1^2\},\pi_{-2\alpha}\M\ZZ_2)\to 0
\]
in the motivic Adams-Novikov spectral sequence.
\end{prop}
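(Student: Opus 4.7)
The plan is to deduce both extensions from the Bockstein short exact sequence associated to $\MZ_2 \xrightarrow{2} \MZ_2 \to \M\FF_2$, translated via Theorem \ref{thm:UCT} and multiplication by powers of $\eta$.

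For the first extension, I would begin with the Bockstein sequence in bidegree $1-\alpha$:
\[
  0 \to \pi_{1-\alpha}\MZ_2/2 \to \pi_{1-\alpha}\M\FF_2 \to (\pi_{-\alpha}\MZ_2)[2] \to 0.
\]
Since $\ZZ/2\{\alpha_1\} \cong \ZZ/2$, Theorem \ref{thm:UCT} identifies the outer terms with the MANSS summands $\ZZ/2\{\alpha_1\} \otimes \pi_{1-\alpha}\MZ_2$ and $\Tor(\ZZ/2\{\alpha_1\}, \pi_{-\alpha}\MZ_2)$. The Milnor conjecture identifies the middle term as $\ZZ/2\{\tau\}$. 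As in the proof of Proposition \ref{prop:perm}, multiplication by $\eta$ maps $\pi_{1-\alpha}\M\FF_2$ isomorphically onto the subgroup $\ZZ/2\{\eta_s\} \subset \pi_1\one\comp{2}$: the class $h_1\tau$ (the motivic Adams representative of $\eta\tau$) detects $\eta_s$ up to the $h_2\rho^2$ correction, and $\eta_s$ is nontrivial of order $2$ by Lemma \ref{lemma:topmot}. Multiplicativity of the MANSS sends the sub $\pi_{1-\alpha}\MZ_2/2$ into MANSS filtration $1$ and the quotient $(\pi_{-\alpha}\MZ_2)[2]$ into MANSS filtration $0$, matching the Bockstein SES to the claimed MANSS extension.

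For the second extension, I would repeat the argument in bidegree $1-2\alpha$:
\[
  0 \to \pi_{1-2\alpha}\MZ_2/2 \to \pi_{1-2\alpha}\M\FF_2 \to (\pi_{-2\alpha}\MZ_2)[2] \to 0,
\]
with middle $K^M_1(k)/2\cdot\tau$ by the Milnor conjecture. Theorem \ref{thm:UCT} identifies the outer terms with $\ZZ/2\{\alpha_1^2\} \otimes \pi_{1-2\alpha}\MZ_2$ and $\Tor(\ZZ/2\{\alpha_1^2\}, \pi_{-2\alpha}\MZ_2)$. Multiplication by $\eta^2$ sends $[u]\tau$ to $[u]\eta\eta_s$: expanding $\eta\tau = \eta_s + \nu\rho^2$ (from the proof of Proposition \ref{prop:perm}) and applying $\eta\nu = 0$ (used in the proof of Proposition \ref{prop:4nu}) gives $\eta^2\tau = \eta\eta_s$, so the image is $K^M_1(k)/2\{\eta\eta_s\}$.

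The hard part will be verifying injectivity of $\eta^2\cdot\colon K^M_1(k)/2\{\tau\} \to \pi_1\one\comp{2}$. My approach is a case analysis: for nonzero $[u] \in K^M_1(k)/2$, the class $[u]\tau$ either lifts to $\pi_{1-2\alpha}\MZ_2/2$ (placing $[u]\eta\eta_s$ at MANSS filtration $2$ as a nonzero element of $\alpha_1^2 \otimes \pi_{1-2\alpha}\MZ_2$), or its Bockstein image is nonzero in $(\pi_{-2\alpha}\MZ_2)[2]$ (placing $[u]\eta\eta_s$ at MANSS filtration $1$ as a nonzero element of the Tor summand). Either way $[u]\eta\eta_s \neq 0$, establishing injectivity and matching the Bockstein extension with the MANSS extension.
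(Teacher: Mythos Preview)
Your approach is correct and shares the paper's core idea: both use the Bockstein short exact sequence coming from $\MZ_2\xrightarrow{2}\MZ_2\to\M\FF_2$ to produce the abstract extension and then identify the outer terms with MANSS filtration pieces via Theorem~\ref{thm:UCT}. For the first extension, your argument and the paper's are identical---both simply point back to the proof of Proposition~\ref{prop:perm}.

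For the second extension, the paper takes a much shorter route. After noting (as you do) that the Bockstein in bidegree $1-2\alpha$ yields the abstract short exact sequence with middle term $\pi_{1-2\alpha}\M\FF_2\cong K^M_1(k)/2$, the paper simply observes: ``this extension is realized by multiplying the first extension by $\alpha_1$.'' Since $\alpha_1$ detects $\eta$, the multiplicative structure of the MANSS carries the filtration-$0$/filtration-$1$ detection of $\eta_s$ (first extension) to the filtration-$1$/filtration-$2$ detection of $\eta\eta_s$ and its $K^{MW}_1$-multiples. This one line replaces your entire case analysis for injectivity.

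Your direct argument works, but two cautions about the write-up. First, expressions like ``$\eta\tau=\eta_s+\nu\rho^2$'' and ``$\eta^2\tau=\eta\eta_s$'' conflate elements of $\pi_\star\M\FF_2$ with elements of $\pi_\star\one\comp{2}$; $\tau$ is not a homotopy class of $\one$. The precise statement is that the MASS class $h_1\tau$ (and hence $h_1^2[u]\tau$) is an infinite cycle detecting $\eta_s$ (resp.\ $[u]\eta\eta_s$) modulo the $h_2\rho^2$ correction. Second, your case split (on whether $[u]\tau$ lifts through the Bockstein) is a dichotomy in bidegree $1-2\alpha$, while the first extension's dichotomy is in bidegree $1-\alpha$; these are genuinely different and you should be careful not to conflate them when tracking filtrations. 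The paper's multiplicativity argument sidesteps this bookkeeping entirely.
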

\begin{proof}
The first extension has already been observed in the proof of
Proposition \ref{prop:perm}.  For the second extension, first note
that such a short exact sequence of abelian groups exists by applying
$\pi_{1-2\alpha}$ to the cofiber sequence
$\M\ZZ_2\xrightarrow{2}\M\ZZ_2\to \M\FF_2$.  (We use the isomorphism
$\pi_{1-2\alpha}\M\FF_2\cong K^M_1(k)/2$ to make the identification.)
We see that this extension is realized by multiplying the first
extension by $\alpha_1$.
\end{proof}

The unit map $\one\to \KO$ for Hermitian $K$-theory over any field with $p\neq 2$ induces 
\[
\pi_1\one\comp{2} 
\xrightarrow{e} 
\pi_1\KO\comp{2} 
\cong 
K^M_1(k)/2\oplus \ZZ/2.
\]
Classically \cite{Bass}, the isomorphism $\pi_1\KO\cong K^M_1(k)/2\oplus
\ZZ/2$ is given by $(SN,\det)$ where $SN$ is the spinor norm and
$\det$ is the determinant composed with the isomorphism $\{\pm
1\}\cong \ZZ/2$.

\begin{lemma} \label{lemma:surj}
The unit map $\one\to\KO$ over any field with $p\neq 2$ induces a surjection
\[
  \pi_1\one\comp{2}\xrightarrow{e} K^M_1(k)/2\oplus \ZZ/2.
\]
In fact, the integral version of this result holds so that
\[
  \pi_1\one\xrightarrow{e} K^M_1(k)/2\oplus \ZZ/2
\]
is surjective.
\end{lemma}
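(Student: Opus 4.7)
The plan is to exhibit preimages of generators of $K^M_1(k)/2 \oplus \ZZ/2$ using $GW(k)$-multiples of the simplicial Hopf map $\eta_s$. Since $\pi_1\KO \cong K^M_1(k)/2 \oplus \ZZ/2$ is $2$-torsion, the integral statement follows at once from the $2$-complete version: the map $e$ factors as $\pi_1\one \twoheadrightarrow \pi_1\one/2 \twoheadrightarrow \pi_1\one\comp{2}/2 \to \pi_1\KO$, so surjectivity after $2$-completion suffices.

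To locate $e(\eta_s)$ in $K^M_1(k)/2\oplus\ZZ/2$, I would first compose with the forgetful map $\KO\to\KGL$ to recover the unit $\one\to\KGL$; by Lemma~\ref{lemma:topmot}, the image of $\eta_s$ in $\pi_1\KGL\cong k^\times$ is the class of $-1$. The classical description of the forgetful map on $\pi_1$ as $(a,\epsilon)\mapsto(-1)^\epsilon \bmod (k^\times)^2$ then forces the determinant component of $e(\eta_s)=(a_0,1)$ to be nontrivial for some (possibly nonzero) $a_0 \in K^M_1(k)/2$. Next, I would exploit the $\pi_0\one \cong GW(k)$-module structure on $\pi_1\KO$, respected by the unit map. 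The classical Hermitian $K$-theory identity gives $\langle u\rangle \cdot (a_0,1) = (a_0+[u],1)$, reflecting how the spinor norm of a determinant-$(-1)$ orthogonal transformation shifts by $[u]$ when rescaled by a rank-one form $\langle u\rangle$. Combined with $\langle u\rangle - 1 = [u]\eta$ in $GW(k)$, this yields
\[
e\bigl((\langle u\rangle - 1)\eta_s\bigr) = ([u],0)
\]
for every $u\in k^\times$, so together with $e(\eta_s) = (a_0,1)$ the images of the $GW(k)$-multiples of $\eta_s$ generate $K^M_1(k)/2 \oplus \ZZ/2$, proving surjectivity.

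The main obstacle is the verification of the $GW(k)$-action formula $\langle u\rangle\cdot(a_0,1) = (a_0+[u],1)$ in $\pi_1\KO$; this is a classical Hermitian $K$-theory identity (cf.~\cite{Bass}), delicate only in the bookkeeping of signs and the precise conventions for the spinor norm. The remaining arguments are formal, amounting to naturality of the unit maps $\one \to \KGL$ and $\one \to \KO$ together with the two quotient reductions used to pass between integral and $2$-completed statements.
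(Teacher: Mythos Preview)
Your argument is essentially the same as the paper's: both exhibit preimages of generators of $K^M_1(k)/2\oplus\ZZ/2$ as $GW(k)$-multiples of $\eta_s$, using the $GW(k)$-module structure on $\pi_1\KO$ and the formula $\langle u\rangle\cdot e(\eta_s)=([u],1)$. The paper is slightly more direct in that it identifies $e(\eta_s)=(0,1)$ outright (by observing that $\eta_s$ is represented by the matrix $(-1)$ in $O(k)^{\mathrm{ab}}$, whose spinor norm is trivial), whereas you only pin down the determinant component via the forgetful map to $\KGL$ and leave the spinor-norm component as an unknown $a_0$; this is fine for surjectivity. One small cleanup: your opening reduction paragraph is unnecessary and the claimed surjection $\pi_1\one/2\twoheadrightarrow\pi_1\one\comp{2}/2$ is not justified in general --- but this does not matter, since the explicit preimages $\langle u\rangle\eta_s$ and $(\langle u\rangle-1)\eta_s$ you construct already live in $\pi_1\one$ before any completion, so your computation directly proves the integral (and hence also the $2$-complete) statement. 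Also, the target of the forgetful map is $k^\times$, not $k^\times/(k^\times)^2$; the conclusion that $\epsilon=1$ still follows since $-1\ne 1$ in $k^\times$ when $p\ne 2$.
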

\begin{proof}
First note that $e(\eta_s) = (0,1)$.  This fact is classical, coming
from the fact that $e(\eta_s)$ is represented by the matrix $(-1)$ in
$O(k)^{\operatorname{ab}}$ and the same topological-to-motivic
argument as in Lemma \ref{lemma:topmot} (with $\KO$ in place of $\KGL$).

Now note that there is a $GW(k)$-module map $GW(k)\to
k^M_1 \oplus \ZZ/2$ induced by
multiplying with $e(\eta_s)$.  This takes a one-dimensional quadratic
form $\langle u\rangle$ to $([u],1)$.  Since the unit map $\one\to \KO$
is a ring map, we have $e(\langle u\rangle \eta_s) = ([u],1)$.
Furthermore, $e((1+\langle u\rangle)\eta_s) = ([u],0)$, so we see that
$e$ is surjective.
\end{proof}

We can now write down the group structure on $\pi_1 \one\comp{2}$.

\begin{thm} \label{thm:2pi1}
Over a $2$-low-dimensional field $k$, $\pi_1\one\comp{2}$ fits into a short exact
sequence
\[
  0\to K^M_2(k)/8\to \pi_1\one\comp{2}\xrightarrow{e} k^M_1(k)\oplus \ZZ/2\to 0
\]
If we consider $\pi_{1+*\alpha}\one\comp{2}$ as a $\pi_{*\alpha}\one\comp{2}
\cong K^{MW}_{-*}{\comp{2}}$-module, then $K^M_2(k)/8 \subset
\pi_1\one\comp{2}$ consists of $K^{MW}_2(k)\comp{2}$-multiples of
$\nu$.  Moreover, the $K^{MW}_0(k)\comp{2}\cong GW(k)\comp{2}$-multiples of $\eta_s$ map onto
$K^M_1(k)/2 \oplus \ZZ/2$ via $\langle u\rangle \eta_s\mapsto ([u],1)$. 

As a short exact sequence of abelian groups, the $\ZZ/2\{\eta_s\}$-summand splits, 
while there is an
extension so that $[u]\eta\eta_s + [v]\eta\eta_s = [uv]\eta\eta_s - 4[u,v]\nu$.
\end{thm}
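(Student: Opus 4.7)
The plan is to assemble the $E_\infty$-page of the MANSS in the $1$-column from the previous propositions and then solve the extension problems using the Milnor--Witt relations together with Lemma \ref{lemma:surj}. By Propositions \ref{prop:perm}--\ref{prop:d3}, the only nontrivial differential affecting the $1$-column is the $d_3$ of Proposition \ref{prop:d3}, which kills the image of $\Tor(\ZZ/4\{\alpha_{2/2}\},\pi_{-2\alpha}\MZ_2)$ in the $2$-column and replaces $\ZZ/2\{\alpha_1^3\}\otimes \pi_{1-3\alpha}\MZ_2$ in the $1$-column by the quotient $K^M_2(k)/\bigl(2,\Tor(\ZZ/4,K^M_2(k))\bigr)\{\tau\alpha_1^3\}$. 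First I would therefore write down $E_\infty^{*,1+*\alpha}$ as the associated graded of a finite filtration on $\pi_1\one\comp{2}$, read off from Table \ref{table:E2}.

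Next I would use Proposition \ref{prop:Kmod8} to combine the filtration quotients $\ZZ/4\{\alpha_{2/2}\}\otimes \pi_{-2\alpha}\MZ_2$ and $K^M_2(k)/\bigl(2,\Tor(\ZZ/4,K^M_2(k))\bigr)\{\tau\alpha_1^3\}$ into the single summand $K^M_2(k)/8\{\nu\}$, and Proposition \ref{prop:UCT} to consolidate the pairs of filtration quotients arising from $\alpha_1$ and $\alpha_1^2$ into $\ZZ/2\{\eta_s\}$ and $K^M_1(k)/2\{\eta\eta_s\}$, respectively. This produces a filtration on $\pi_1\one\comp{2}$ whose associated graded is $K^M_2(k)/8 \oplus K^M_1(k)/2 \oplus \ZZ/2$, with $K^M_2(k)/8$ sitting in the top filtration as $K^{MW}_2(k)\comp{2}$-multiples of $\nu$.

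To turn this into the claimed short exact sequence, I would use Lemma \ref{lemma:surj}: the unit map $\one\to\KO$ provides a surjection $e\colon \pi_1\one\comp{2}\twoheadrightarrow K^M_1(k)/2 \oplus \ZZ/2$ sending $\langle u\rangle \eta_s \mapsto ([u],1)$. The top filtration quotient $K^M_2(k)/8\{\nu\}$ lies in $\ker e$ because $\KO$-theory detects nothing in its tridegree beyond what comes from the $0$- and $1$-filtrations already identified with $e$'s image; combined with the already-established associated graded, this forces the filtration in $\pi_1\one\comp{2}$ to give exactly the short exact sequence of the theorem. The splitting of the $\ZZ/2\{\eta_s\}$-summand follows from $e(\eta_s)=(0,1)$ and the fact that $2\eta_s = 0$.

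The main obstacle is verifying the hidden extension formula $[u]\eta\eta_s + [v]\eta\eta_s = [uv]\eta\eta_s - 4[u,v]\nu$, but this follows formally once the module structure is in place. In $K^{MW}_1(k)$ we have $[uv] = [u] + [v] + [u][v]\eta$, so multiplying by $\eta\eta_s$ and using $[u][v] = [u,v]$ in $K^M_2(k)$ gives
\[
[uv]\eta\eta_s = [u]\eta\eta_s + [v]\eta\eta_s + [u,v]\eta^2\eta_s.
\]
The hidden relation $\eta^2\eta_s = 4\nu$ of Proposition \ref{prop:4nu} then converts this into the stated formula. The finite-vcd hypothesis needed for Proposition \ref{prop:4nu} is already implicit in being $2$-low-dimensional, so this step goes through without additional work, completing the proof.
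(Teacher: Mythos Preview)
Your proposal is correct and follows essentially the same approach as the paper: assemble the $E_\infty$-page from the preceding propositions, use Lemma~\ref{lemma:surj} for the surjection $e$, identify $\ker e$ with $K^M_2(k)/8\{\nu\}$ via Proposition~\ref{prop:Kmod8}, split off $\ZZ/2\{\eta_s\}$ using $2\eta_s=0$, and derive the extension formula from the Milnor--Witt relation $[uv]=[u]+[v]+[u][v]\eta$ together with $\eta^2\eta_s=4\nu$ from Proposition~\ref{prop:4nu}. The paper's proof is slightly terser but the logical structure is identical.
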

\begin{proof}
Lemma \ref{lemma:surj} gives us the surjectivity of $e$.  On the
$E_\infty$-page of the MANSS the kernel of $e$ is
represented by $\ZZ/4\{\alpha_{2/2}\}\otimes \pi_{-2\alpha}\M\ZZ_2$
and $\ZZ/2\{\alpha_1^3\}\otimes \pi_{1-3\alpha}\M\ZZ_2$.  The hidden
extension identified in Proposition \ref{prop:Kmod8} implies that the
kernel of $e$ is a copy of $K^M_2(k)/8$ generated by $\nu$.

To identify the group structure, first note that the $\ZZ/2$-summand
splits via $\varepsilon\mapsto \varepsilon\eta_s$ since $\eta_s$ has
order 2.  Thus it suffices to compute the addition
law on elements of $\pi_1$ represented by $[u]\eta\eta_s$.  Since $[uv] =
[u]+[v]+[u][v]\eta$ in $K^{MW}_1(k)$, we find that
\[
\begin{aligned}
  {}[u]\eta\eta_s + [v]\eta\eta_s
  &= ([uv]-[u][v]\eta)\eta\eta_s\\
  &= [uv]\eta\eta_s - [u][v]\eta^2\eta_s\\
  &= [uv]\eta\eta_s - 4[u,v]\nu,
\end{aligned}
\]
where the last equality follows from Proposition \ref{prop:4nu}.
\end{proof}

\section{The first rational stable motivic stem} \label{sec:rationalPi1}

In order to perform an arithmetic fracture computation of the first
stable motivic stem, we have to understand a couple of things about
$\one_\QQ$, the rationalization of the motivic sphere spectrum.  To
define $\one_\QQ$, consider a free resolution
\[
  \ZZ^{\oplus I}\xrightarrow{f} \ZZ^{\oplus J}\to \QQ\to 0.
\]
Then $\one_\QQ$ is the the cofiber
\[
  \bigvee_I \one\to \bigvee_J \one\to \one_\QQ
\]
of the map induced by $f$.  
As in topology,
the homotopy groups of $\EEE\smsh \one_\QQ$ for any spectrum $\EEE$ are
\[
  \pi_\star \EEE\smsh \one_\QQ \cong \pi_\star \EEE \otimes \QQ.
\]
We also note that
\[
  \one_\QQ \simeq \operatornamewithlimits{hocolim}(\one\xrightarrow{2}
  \one\xrightarrow{3} \one\xrightarrow{4} \one\xrightarrow{5}\cdots).
\]
(\emph{Warning}: 
$\one_\QQ$ is the rationalization of the motivic sphere spectrum over
$k$, not the motivic sphere spectrum over $\QQ$.)

Let $(-)_\QQ$ denote Bousfield localization at the rational Moore
spectrum $\one_\QQ$.

\begin{prop} \label{prop:Qsmashing}
For all motivic spectra $\EEE$,
\[
  \EEE_\QQ \simeq \EEE\smsh \one_\QQ
\]
and the natural map
\[
  \EEE\to \EEE\smsh \one_\QQ
\]
is the Bousfield localization of $\EEE$ at $\one_\QQ$.
\end{prop}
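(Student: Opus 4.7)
The plan is to show that $\one_\QQ$ is an idempotent in $\SH(k)$ (in the sense that $\one_\QQ \to \one_\QQ \smsh \one_\QQ$ is a weak equivalence), and then invoke the standard characterization of Bousfield localization at a smashing idempotent: smashing with such an object computes the localization. The verification will use only the homotopy formula $\pi_\star(\EEE \smsh \one_\QQ) \cong \pi_\star \EEE \otimes \QQ$ stated just before the proposition, together with the hocolim presentation of $\one_\QQ$.

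First I would establish idempotence. From the hocolim presentation
\[
\one_\QQ \simeq \operatornamewithlimits{hocolim}(\one \xrightarrow{2} \one \xrightarrow{3} \one \xrightarrow{4} \cdots),
\]
multiplication by any positive integer $n$ is a weak equivalence on $\one_\QQ$ (it is a cofinal shift of the hocolim), so $\pi_\star \one_\QQ$ is already uniquely divisible, i.e., a $\QQ$-module. Applied to $\EEE = \one_\QQ$, the stated formula then gives $\pi_\star (\one_\QQ \smsh \one_\QQ) \cong \pi_\star \one_\QQ \otimes \QQ \cong \pi_\star \one_\QQ$, and this isomorphism is realized by the map $\one_\QQ \simeq \one \smsh \one_\QQ \to \one_\QQ \smsh \one_\QQ$ induced by the unit. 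Hence $\one \to \one_\QQ$ is an idempotent (commutative) ring map.

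Next I would verify the two defining properties of Bousfield localization for the endofunctor $L\EEE := \EEE \smsh \one_\QQ$ with natural transformation $\eta_\EEE : \EEE \to \EEE \smsh \one_\QQ$ induced by the unit. For the $\one_\QQ$-equivalence property, smashing $\eta_\EEE$ with $\one_\QQ$ gives $\EEE \smsh \one_\QQ \to \EEE \smsh \one_\QQ \smsh \one_\QQ$, which is a weak equivalence by idempotence. For $\one_\QQ$-locality of $\EEE \smsh \one_\QQ$, suppose $X$ is $\one_\QQ$-acyclic, i.e., $X \smsh \one_\QQ \simeq 0$, and let $f : X \to \EEE \smsh \one_\QQ$ be a map. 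The naturality square
\[
\begin{array}{ccc}
X & \xrightarrow{f} & \EEE \smsh \one_\QQ\\
{\eta_X}\downarrow & & \downarrow{\eta_{\EEE \smsh \one_\QQ}}\\
X \smsh \one_\QQ & \xrightarrow{f \smsh \id} & \EEE \smsh \one_\QQ \smsh \one_\QQ
\end{array}
\]
has vanishing lower-left corner and an equivalence on the right, so $f$ is null. This shows $[X, \EEE \smsh \one_\QQ] = 0$ for all $\one_\QQ$-acyclic $X$, which is precisely $\one_\QQ$-locality. Together these two properties characterize $\eta_\EEE$ as the Bousfield localization map at $\one_\QQ$, and also exhibit this localization as smashing, proving both claims.

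The main obstacle is really just verifying idempotence cleanly; once that is in hand the two Bousfield axioms fall out formally. A secondary point requiring some care is justifying that $\pi_\star \one_\QQ$ is uniquely divisible from the hocolim presentation (rather than circularly from the formula $\pi_\star(\EEE \smsh \one_\QQ) = \pi_\star \EEE \otimes \QQ$); but this follows directly from the cofinality observation that multiplying the hocolim diagram by $n$ recovers the same diagram up to re-indexing.
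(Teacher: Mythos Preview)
Your proposal is correct and spells out exactly the argument of Bousfield's Proposition~2.4 to which the paper defers. The only minor wrinkle is that your idempotence step concludes a weak equivalence from a $\pi_\star$-isomorphism, which in the motivic setting requires cellularity; but $\one_\QQ$ and $\one_\QQ\smsh\one_\QQ$ are cellular (being filtered hocolims of spheres), so this is harmless---and in any case idempotence follows directly at the spectrum level from the hocolim presentation, since smashing $\one_\QQ$ with the diagram $\one\xrightarrow{2}\one\xrightarrow{3}\cdots$ yields a diagram of equivalences.
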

\begin{proof}
The proof is exactly as in \cite[Proposition 2.4]{MR551009}.
\end{proof}

Morel \cite{Morelsplitting}  
constructs the idempotent $\epsilon$ by twisting
$\GG_m\smsh \GG_m$ and proves that it corresponds to $-\langle
-1\rangle \in\pi_0\one = GW(k)$.  When $2$ is invertible, there are
corresponding projectors $e_+ = \frac{1}{2}(\epsilon-1)$ and $e_- =
\frac{1}{2}(\epsilon+1)$ and decompositions of spectra into $+$- and
$-$-components.  When $k$ is nonreal, we have $\epsilon = -1$, whence
$e_- = 0$ and $e_+$ is the identity map.  Combining the results of
\cite[\S16.2]{CD} with
\cite[\S5.3.35]{CD} proves that
$S_{\QQ+} \simeq \M\QQ$, so our hypothesis on $k$ implies that
$\one_\QQ\simeq \M\QQ$.  We record this fact in the following theorem.

\begin{thm}[Morel, Cisinski-Deglise] \label{thm:SQ}
If $k$ is nonreal (i.e., $-1$ is a sum of squares in $k$), 
then
\[
  \one_\QQ \simeq \M\QQ.
\]
In particular, if $k$ is low-dimensional with $p\ne 2$,
$\one_\QQ\simeq \M\QQ$.
\end{thm}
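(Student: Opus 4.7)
The plan is to follow the sketch in the paragraph immediately preceding the statement, making its three ingredients precise: rational triviality of the Witt ring for nonreal fields, Morel's idempotent splitting, and the Cisinski--D\'eglise identification of the plus summand with rational motivic cohomology. First I would check that nonreality forces $W(k)$ to be a torsion group. This is a classical theorem of Pfister: $k$ is formally real if and only if $W(k)$ admits a nontrivial signature, so for nonreal $k$ we have $W(k)\otimes\QQ = 0$. Combined with the rank exact sequence $0\to I(k)\to GW(k)\to\ZZ\to 0$ (where $I(k)\subset W(k)$ is the fundamental ideal), this gives $GW(k)\otimes\QQ\cong\QQ$, whence $\langle -1\rangle = 1$ in $\pi_0\one_\QQ$ and Morel's idempotent $\epsilon = -\langle -1\rangle$ becomes $-1$ after rationalization.

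Next, I would invoke Morel's splitting \cite{Morelsplitting}: over a field with $p\ne 2$, the projectors $e_\pm$ associated to $\epsilon$ decompose $\one[1/2]$ as a wedge of plus- and minus-parts, and this decomposition is compatible with rationalization by Proposition \ref{prop:Qsmashing}. Since $\epsilon_\QQ = -1$, one of these projectors vanishes on $\one_\QQ$ and the other acts as the identity, so $\one_\QQ$ coincides with the single summand $\one_{\QQ+}$. The Cisinski--D\'eglise identification \cite[\S 5.3.35 \& \S 16.2]{CD} then produces $\one_{\QQ+}\simeq\M\QQ$, giving the desired equivalence.

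The ``in particular'' clause follows because a low-dimensional field with $p\ne 2$ is automatically nonreal. Indeed, any formally real field $k$ admits a real closure $k^r$ whose absolute Galois group is $\ZZ/2$; this realizes a finite $2$-subgroup of $G_k$, which forces $\cd_2 k = \infty$ (finite $p$-subgroups of a profinite group have infinite $p$-cohomological dimension). This contradicts the hypothesis $\cd k \le 2$, so $k$ is nonreal and the first part of the theorem applies.

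The principal obstacle is the Cisinski--D\'eglise input $\one_{\QQ+}\simeq\M\QQ$, which compares the plus part of the rational motivic sphere with the rational motivic Eilenberg--MacLane spectrum via a substantive argument involving rational motives over $k$. Happily, this is a clean black-box citation once $\epsilon_\QQ = -1$ has been established; the remaining work is bookkeeping with Grothendieck--Witt theory and a short cohomological-dimension argument.
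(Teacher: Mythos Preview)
Your proposal is correct and follows exactly the approach sketched in the paragraph preceding the theorem in the paper: Morel's idempotent $\epsilon = -\langle -1\rangle$, the vanishing of $e_-$ on $\one_\QQ$ for nonreal $k$, and the Cisinski--D\'eglise identification $\one_{\QQ+}\simeq\M\QQ$. You have supplied more detail than the paper does---in particular the Pfister input that $W(k)$ is torsion for nonreal $k$ (the paper simply asserts $\epsilon=-1$), and the explicit $\cd_2$ argument for the ``in particular'' clause---but the logical skeleton is identical.
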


As such, it will be important to have vanishing results akin to Lemma
\ref{lemma:vanish} for $\pi_\star \M\QQ$.  The following lemma states
the known vanishing range without assuming the Beilinson-Soul\'{e}
vanishing conjecture.

\begin{lemma}\label{lemma:Qvanish}
For any field $k$, $\pi_{m+n\alpha}\M\QQ = 0$
whenever $n\ge -1$ and $m+n\alpha\ne 0$ or $-\alpha$.  We also have
$\pi_{m+n\alpha}\M\QQ = 0$ for $m<0$.
\end{lemma}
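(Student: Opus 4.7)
The plan is to rewrite $\pi_{m+n\alpha}\M\QQ$ in terms of motivic cohomology and then invoke the explicit low-weight computations that do not require Beilinson--Soul\'e vanishing. Concretely, by the definition of $\M\QQ$ together with the identification of the homotopy groups of the motivic Eilenberg--MacLane spectrum, one has
\[
\pi_{m+n\alpha}\M\QQ \;\cong\; H^{-m-n,\,-n}(\Spec k;\QQ),
\]
where $H^{p,q}$ denotes rational motivic cohomology in bidegree $(p,q)$. This is the only reformulation step needed, and once it is in place the two claims become standard.

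For the second claim ($m<0$), the approach is to note that $m<0$ forces $-m-n > -n$, i.e.\ $p>q$ in the above bidegree. Since the motivic complex $\QQ(q)$ is concentrated in cohomological degrees $\le q$, one has $H^{p,q}(\Spec k;\QQ)=0$ whenever $p>q$, and the claim follows immediately.

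For the first claim, I would set $q=-n$, so the hypothesis $n\ge -1$ becomes $q\le 1$, and argue case by case. If $q<0$, the complex $\QQ(q)$ is zero, so $H^{p,q}=0$ for all $p$. If $q=0$, then $\QQ(0)\simeq\QQ$ as a complex concentrated in degree $0$, giving $H^{p,0}(\Spec k;\QQ)=\QQ$ for $p=0$ and $0$ otherwise; this contributes only at $(m,n)=(0,0)$, i.e.\ at $0$. If $q=1$, one uses the standard identification $\ZZ(1)\simeq\GG_m[-1]$ to get $H^{p,1}(\Spec k;\QQ)\cong H^{p-1}(\Spec k;\GG_m)\otimes\QQ$, which is $k^\times\otimes\QQ$ for $p=1$ and vanishes otherwise; this contributes only at $(m,n)=(0,-1)$, i.e.\ at $-\alpha$. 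Combining the three sub-cases gives exactly the stated exceptional loci.

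There is no genuine obstacle here; the point of the lemma is precisely that restricting to $n\ge -1$ keeps us in weights $q\le 1$, where motivic cohomology of a field is completely known (via $\GG_m$ and constants) and the Beilinson--Soul\'e vanishing conjecture, which would be required for $q\ge 2$, never enters. The only thing to double-check is that the bidegree conversion $\pi_{m+n\alpha}\M\QQ=H^{-m-n,-n}$ is the one consistent with Lemma \ref{lemma:vanish}, which it is.
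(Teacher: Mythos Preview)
Your proposal is correct and, for the first assertion, follows essentially the same route as the paper: the references \cite{SV96} and \cite[Corollary 4.2]{MVW} are precisely the computations of $\ZZ(0)$ and $\ZZ(1)\simeq\GG_m[-1]$ that you spell out in your case analysis for $q\le 1$.

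For the second assertion ($m<0$) there is a genuine difference in method. The paper invokes Morel's stable connectivity theorem \cite{Morelstableconnectivity}, which gives $\underline{\pi}_{m+n\alpha}\EEE=0$ for $m<0$ whenever $\EEE$ is suitably connective, and applies it to $\M\QQ$. Your argument is instead intrinsic to motivic cohomology: the complex $\QQ(q)$ is concentrated in degrees $\le q$, so for $\Spec k$ one has $H^{p,q}(\Spec k;\QQ)=0$ once $p>q$. Your approach is more elementary and self-contained for this particular spectrum, while the paper's approach is more structural and would apply uniformly to any connective motivic spectrum without needing an explicit cycle-complex model. Both are valid and yield the same conclusion.
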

\begin{proof}
The first assertion follows from \cite{SV96} (see
also \cite[Corollary 4.2]{MVW}).  The second assertion is a consequence of
Morel's stable connectivity theorem \cite{Morelstableconnectivity}.
\end{proof}

\begin{rmk}\label{rmk:BS}
The rational version of Beilinson-Soul\'{e} vanishing asserts that
$\pi_{m+n\alpha}\M\QQ$ also vanishes whenever $m+n>0$.  In the low
range of dimensions we consider, these groups will not interfere with
our $\pi_\star \one$ computations, but they become important as soon
as one considers $\pi_{2+*\alpha}\one$.
\end{rmk}

\begin{prop} \label{prop:pi1SQ}
If $k$ is nonreal, then
\[
  \pi_{m+n\alpha} \one_\QQ = 0
\]
vanishes in the same range as $\pi_{m+n\alpha}\M\QQ$ stated in Lemma
\ref{lemma:Qvanish}.  In particular,
$\pi_{1+n\alpha}\one_\QQ = 0$ for $n\ge -1$.
\end{prop}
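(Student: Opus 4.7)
The plan is to exploit Theorem \ref{thm:SQ}, which identifies $\one_\QQ$ with the rational motivic Eilenberg-MacLane spectrum $\M\QQ$ whenever the base field is nonreal. Once this identification is in hand, the homotopy groups of $\one_\QQ$ coincide with those of $\M\QQ$, and the vanishing statement reduces entirely to the known vanishing range for $\pi_\star \M\QQ$ recorded in Lemma \ref{lemma:Qvanish}.

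Concretely, I would proceed in two short steps. First, invoke Theorem \ref{thm:SQ} to obtain a weak equivalence $\one_\QQ \simeq \M\QQ$ under the hypothesis that $k$ is nonreal (so $-1$ is a sum of squares, hence $\epsilon = -1$ and the $-$-part of $\one_\QQ$ vanishes). Second, transport the vanishing from Lemma \ref{lemma:Qvanish} across this equivalence, giving $\pi_{m+n\alpha}\one_\QQ = 0$ whenever $m < 0$, and also whenever $n \ge -1$ and $m+n\alpha \notin \{0,-\alpha\}$.

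For the \emph{in particular} clause, specialize to $m=1$. Since $1+n\alpha \ne 0$ and $1+n\alpha \ne -\alpha$ for every integer $n$ (as the simplicial weight is $1\ne 0,-0$), the hypothesis $n \ge -1$ puts $(1,n)$ inside the vanishing range, and therefore $\pi_{1+n\alpha}\one_\QQ = 0$ for all $n \ge -1$.

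I do not anticipate any genuine obstacle: the entire content of the proposition is packaged into Theorem \ref{thm:SQ} (the nontrivial geometric input, already proved by Morel and Cisinski--D\'{e}glise) combined with Lemma \ref{lemma:Qvanish} (the motivic cohomology input of Suslin--Voevodsky). The only verification required is the trivial numerical check that the relevant bidegrees $1+n\alpha$ with $n \ge -1$ lie in the stated vanishing region.
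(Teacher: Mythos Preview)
Your proposal is correct and matches the paper's proof exactly: the paper simply states that the proposition is an immediate consequence of Theorem \ref{thm:SQ} and Lemma \ref{lemma:Qvanish}, which is precisely the two-step argument you outline.
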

\begin{proof}
This is an immediate consequence of Theorem \ref{thm:SQ} and Lemma \ref{lemma:Qvanish}.
\end{proof}

\begin{prop} \label{prop:pi2SQ}
We have
\[
  \pi_i\left(\prod_\ell \one\comp{\ell}\right)_\QQ = 0
\]
over any low-dimensional field $k$, $i=1,2$.
\end{prop}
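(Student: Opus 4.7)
By Proposition~\ref{prop:Qsmashing},
\[
  \pi_i\Big(\prod_\ell \one\comp{\ell}\Big)_\QQ \cong \Big(\prod_\ell \pi_i \one\comp{\ell}\Big) \otimes_\ZZ \QQ,
\]
so my plan is to verify that $\prod_\ell \pi_i \one\comp{\ell}$ is a torsion abelian group for $i = 1, 2$; tensoring with $\QQ$ then kills it. A product with only finitely many nonzero factors reduces to a finite direct sum, which is torsion if each summand is. The argument therefore reduces to establishing: (a) $\pi_i \one\comp{\ell} = 0$ for all primes $\ell$ above some bound $L_i$, and (b) for each $\ell \leq L_i$, the group $\pi_i \one\comp{\ell}$ is torsion.

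For $i = 1$, this is immediate bookkeeping from the excerpt. Theorem~\ref{thm:p>3pi1} gives $\pi_1 \one\comp{\ell} = 0$ for $\ell > 3$; Theorem~\ref{thm:3pi1} identifies $\pi_1 \one\comp{3}$ with the $3$-torsion group $K^M_2(k)/3$; and Theorem~\ref{thm:2pi1} presents $\pi_1 \one\comp{2}$ as an extension of $K^M_1(k)/2 \oplus \ZZ/2$ by $K^M_2(k)/8$, hence $16$-torsion. Thus $\prod_\ell \pi_1 \one\comp{\ell} = \pi_1 \one\comp{2} \oplus \pi_1 \one\comp{3}$ is $48$-torsion.

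For $i = 2$, I would repeat the analogous MANSS analysis in the $2$-column. By Theorem~\ref{thm:UCT}, each $E_2^{s, m + n\alpha}$ decomposes into tensor and $\Tor$ contributions from ${}^\top E_2^{s, t}$ and $\pi_\star \MZ_\ell$. Combining the vanishing range for $\pi_\star \MZ_\ell$ over a $2$-low-dimensional field (Lemma~\ref{lemma:vanish}) with the sparsity of the topological ANSS (nothing in positive stems below $2\ell - 3$), one checks that for $\ell \geq 5$ every potential MANSS $E_2$ cell contributing to $\pi_2 \one\comp{\ell}$ in weight zero vanishes, so $\pi_2 \one\comp{\ell} = 0$. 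For $\ell = 2, 3$, only finitely many cells are relevant (the Novikov vanishing line bounds the filtration). The $0$-line contribution is zero because ${}^\top E_2^{0, t}$ is concentrated at $(0, 0) = \ZZ_\ell$ (the $BP_*$-primitives live only in degree zero) and $\pi_{i + 0\alpha}\MZ_\ell = 0$ for $i > 0$ by Lemma~\ref{lemma:vanish}; all remaining contributions involve ${}^\top E_2^{s, t}$ with $s \geq 1$, which is $\ell$-power torsion, and hence their tensor and $\Tor$ contributions with $\pi_\star \MZ_\ell$ are torsion. A finite direct sum of torsion subquotients is torsion, so $\pi_2 \one\comp{\ell}$ is torsion at $\ell = 2, 3$.

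The principal obstacle will be the careful enumeration of MANSS $E_2$ cells contributing to the $2$-column at $\ell = 2$, where the topological ANSS is densest: one must check that the denser pattern of $\alpha_1^j$, $\alpha_{2/2}$, and their interactions still yields only bounded torsion contributions after the $\Tor$ and tensor bookkeeping. This is routine but lengthy, fully analogous in spirit to the $1$-column analysis in \S\ref{sec:2pi1} and Proposition~\ref{prop:E2}, and requires no new ideas beyond the machinery already developed in the excerpt.
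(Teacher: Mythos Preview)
Your proposal is correct and follows essentially the same approach as the paper: both argue that the $2$-column of the $\ell$-MANSS $E_2$-page consists entirely of torsion groups (indeed, ${}^\top E_2^{s,t}$ is $\ell$-power torsion for $s\ge 1$, and the $s=0$ contribution vanishes), so $\pi_2\prod_\ell\one\comp{\ell}$ is torsion and its rationalization is zero. You are in fact slightly more careful than the paper in making explicit that $\pi_2\one\comp{\ell}=0$ for $\ell\ge 5$, which is what justifies passing from ``each factor is torsion'' to ``the product is torsion''; the paper leaves this implicit.
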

\begin{proof}
The result for $\pi_1$ is clear at this point.  By the methods of
Sections \ref{sec:largePrimePi1} and \ref{sec:2pi1}
we may compute the $2$-column of the $E_2$-page of the $\ell$-MANSS for all
primes $\ell$.  For each prime, all the entries are torsion, hence $\pi_2
\prod_\ell \one\comp{\ell}$ is torsion.  We conclude that $\pi_2$ of the
rationalization of $\prod_\ell \one\comp{\ell}$ is trivial.
\end{proof}

\section{The first stable motivic stem} \label{sec:pi1}
  
We employ the arithmetic fracture square of Appendix \ref{appendix} in order to glue together our MANSS and rational computations into an integral result.  
The homotopy pullback square
\[
\xymatrix{
  \one[1/p]\ar[r]\ar[d] &\prod_{\ell\ne p} \one\comp{\ell}\ar[d]\\
  \one_\QQ\ar[r] &\left(\prod_{\ell\ne p} \one\comp{\ell}\right)_\QQ
}
\]
induces the long exact sequence
\begin{equation}
\label{homotopyles}
\cdots\to \pi_2 \left(\prod_{\ell\ne p} \one\comp{\ell}\right)_\QQ\to \pi_1 \one[1/p] \to \pi_1
\one_\QQ\oplus \pi_1 \prod_{\ell\ne p}\one\comp{\ell} \to \pi_1
\left(\prod_{\ell\ne p}
  \one\comp{\ell}\right)_\QQ\to \cdots.
\end{equation}

By Propositions \ref{prop:pi1SQ} and \ref{prop:pi2SQ}, we see that,
over a low-dimensional field $k$, this results in an isomorphism 
\[
  \pi_1 \one[1/p]\to \pi_1 \prod_{\ell\ne p}\one\comp{\ell}.
\]

Theorems \ref{thm:p>3pi1}, \ref{thm:3pi1}, and \ref{thm:2pi1} now
imply our main theorem.

\begin{thm} \label{thm:pi1}
Over any low-dimensional field $k$ of exponential characteristic $p\ne 2,3$,
$\pi_1 \one[1/p]$ sits in a short exact sequence of abelian groups
\[
  0\to K^M_2(k)/24\to \pi_1\one[1/p]\to K^M_1(k)/2\oplus \ZZ/2\to 0.
\]
The subgroup $K^M_2(k)/24$ consists of $K^{MW}_2(k)$-multiples of $\nu$ and the $K^{MW}_0(k)$ $\cong
GW(k)$-multiples of $\eta_s$ map onto $K^M_1(k)/2\oplus \ZZ/2$ via
$\langle u\rangle \eta_s\mapsto ([u],1)$.  In general, the sequence
does not split.  Rather, the
$\ZZ/2\{\eta_s\}$-summand splits and there is an
extension so that $[u]\eta\eta_s + [v]\eta\eta_s = [uv]\eta\eta_s -
12[u,v]\nu$.

If $k$ is low-dimensional with $p=3$, then there is a short exact
sequence
\[
  0\to K^M_2(k)/8\to \pi_1\one[1/3]\to K^M_1(k)/2\oplus \ZZ/2\to 0
\]
with the same $K^{MW}_*(k)$-module structure and
\[
  [u]\eta\eta_s+[v]\eta\eta_s = [uv]\eta\eta_s - 4[u,v]\nu.
\]

If $k$ is low-dimensional with $p=2$, then
\[
  \pi_1\one[1/2] \cong K^M_2(k)/3,
\]
consisting of $K^{MW}_2(k)$-multiples of $\nu$.
\end{thm}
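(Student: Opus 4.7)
The plan is to assemble the integral statement from the $\ell$-complete computations via the arithmetic fracture long exact sequence (\ref{homotopyles}). Because $k$ is low-dimensional, Propositions \ref{prop:pi1SQ} and \ref{prop:pi2SQ} together give $\pi_1 \one_\QQ = 0$ and $\pi_i(\prod_{\ell\ne p}\one\comp{\ell})_\QQ = 0$ for $i=1,2$. Inserting these vanishings into (\ref{homotopyles}) collapses it to an isomorphism
\[
\pi_1\one[1/p] \xrightarrow{\cong} \pi_1 \prod_{\ell\ne p}\one\comp{\ell} \cong \bigoplus_{\ell\ne p}\pi_1\one\comp{\ell},
\]
where the last identification uses that $\pi_\star$ commutes with products in the stable motivic homotopy category and that only finitely many factors will turn out to be nonzero.

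Next I would invoke the three $\ell$-complete computations. Theorem \ref{thm:p>3pi1} kills $\pi_1\one\comp{\ell}$ for every prime $\ell > 3$; Theorem \ref{thm:3pi1} yields $\pi_1\one\comp{3}\cong K^M_2(k)/3$, consisting of $\nu$-multiples; and Theorem \ref{thm:2pi1} provides the short exact sequence
\[
0 \to K^M_2(k)/8 \to \pi_1\one\comp{2} \to K^M_1(k)/2 \oplus \ZZ/2 \to 0
\]
together with the $K^{MW}_*(k)\comp{2}$-module structure, the $\ZZ/2\{\eta_s\}$-splitting, and the extension $[u]\eta\eta_s+[v]\eta\eta_s = [uv]\eta\eta_s - 4[u,v]\nu$. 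The two special cases then drop out immediately: for $p=2$ only the $\ell=3$ summand survives, giving $\pi_1\one[1/2]\cong K^M_2(k)/3$; for $p=3$ only the $\ell=2$ summand survives, reproducing Theorem \ref{thm:2pi1} verbatim as the statement of the theorem at $p=3$.

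For the remaining case $p\ne 2,3$, the direct sum of the $\ell=2$ and $\ell=3$ short exact sequences takes the form
\[
0 \to K^M_2(k)/8 \oplus K^M_2(k)/3 \to \pi_1\one[1/p] \to K^M_1(k)/2 \oplus \ZZ/2 \to 0,
\]
and the Chinese Remainder Theorem identifies the kernel with $K^M_2(k)/24$. The module structure and the splitting of the $\ZZ/2\{\eta_s\}$-summand are inherited directly from the prime $\ell=2$, since at $\ell=3$ the classes $\eta_s$ and $[u]\eta\eta_s$ are $2$-torsion and therefore vanish in the $3$-complete stem. The extension relation then assembles as follows: the $\ell=2$ data forces the coefficient of $[u,v]\nu$ to satisfy $\delta\equiv -4\pmod 8$, while the identity $0=0$ at $\ell=3$ forces $\delta\equiv 0\pmod 3$; the unique class in $\ZZ/24$ satisfying both is $\delta = -12$, giving $[u]\eta\eta_s+[v]\eta\eta_s = [uv]\eta\eta_s - 12[u,v]\nu$.

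The main obstacle is really just the bookkeeping in this last step: verifying that the extension and module structure transport coherently across the fracture square. The former is the short CRT check just described, and the latter follows from naturality of (\ref{homotopyles}) with respect to the $\one$-action on $\one[1/p]$, which guarantees that the generator $\nu\in \pi_{1+2\alpha}\one$ and the Milnor--Witt module generators $\langle u\rangle \eta_s$ retain their meaning after assembly.
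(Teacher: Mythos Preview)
Your proposal is correct and follows essentially the same route as the paper: collapse the arithmetic fracture long exact sequence via Propositions \ref{prop:pi1SQ} and \ref{prop:pi2SQ}, assemble the $\ell$-complete pieces from Theorems \ref{thm:p>3pi1}, \ref{thm:3pi1}, and \ref{thm:2pi1}, and then run a Chinese Remainder argument to pin down the coefficient $-12$ in the extension relation. The paper's treatment of the $K^{MW}_*(k)$-module structure is slightly more explicit---it computes the $-2\alpha$- and $(1+2\alpha)$-columns of the MANSS to verify directly that $K^{MW}_2(k)\cdot\nu$ fills out $K^M_2(k)/24$ rather than appealing only to naturality---but the substance is the same.
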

\begin{proof}
Assume $p\ne 2,3$.  The cases $p=2,3$ are similar (and easier) and we
leave those details to the interested reader.

We have already produced the short exact sequence for
$\pi_1\one[1/p]$.  Consider how $K^{MW}_2(k)$ acts on $\nu\in
\pi_{1+2\alpha}\one[1/p]$.  By our
low-dimensionality assumption, $K^{MW}_2(k)$ $\cong K^M_2(k)$.
Note that the $-2\alpha$-column of the $E_2$-page of the $\ell$-MANSS
consists only of $\ZZ_\ell\{1\}\otimes \pi_{-2\alpha}\MZ_\ell$ in
homological degree 0.  Computing the $1+2\alpha$-column of the MANSS
(see \S\ref{subsec:wt2}) and the multiplication on $E_\infty$-pages
implies that $K^{MW}_2(k)\cdot \nu$ is exactly the subgroup
$K^M_2(k)/24$ of $\pi_1\one[1/p]$.

Now consider how $K^{MW}_0(k)$ acts on $\eta_s$.  It suffices to
restrict our attention to the 2-MANSS where the $E_\infty$ associated
graded of $K^{MW}_0(k)$ is recorded in Table \ref{table:E2}.  Our
claim follows from recording how the 0-column acts on the 1-column
combined with the fact that $\eta_s$ is the generator of
$\ZZ/2\{\alpha_1\}\otimes \pi_{1-\alpha}\MZ_2$ or
$\Tor(\ZZ/2\{\alpha_1\},\pi_{-\alpha}\MZ_2)$ (only one of which is nonzero).

It remains to calculate the value of $[u]\eta\eta_s + [v]\eta\eta_s$ in
$\pi_1\one[1/p]$.  Under the isomorphism 
$\pi_1 \one[1/p]\to \pi_1 \one\comp{2}\oplus K^M_2(k)/3$ we
have
\[
  [u]\eta\eta_s+[v]\eta\eta_s-[uv]\eta\eta_s\mapsto (-4[u,v]\nu,0).
\]
Since $12[u,v]\nu$ has the same image, we can conclude that
\[
  [u]\eta\eta_s + [v]\eta\eta_s = [uv]\eta\eta_s-12[u,v]\nu.
\]
\end{proof}

\section{Nonzero weights} \label{sec:nonzero}


In this section we briefly record how the computations play out in
weights other than $0$.  Throughout we assume that $k$ is a
low-dimensional field and use Theorem \ref{thm:UCT} as our primary
computational tool.  We warn the reader that we have included much
less detail in this section and the following should be viewed only as
a roadmap for verifying these calculations.


\subsection{Weights greater than $2$}

For an integer $n> 2$ it is easy to see that the $1+n\alpha$-column of
the $\ell$-MANSS is trivial for $\ell\ne 2$.  When $\ell = 2$ (and
$p\ne 2$) and $n\ge 5$, the $n\alpha$-, $1+n\alpha$-, and $2+n\alpha$-columns take
the form depicted in Table \ref{table:wt5MANSS}.

\begin{table}
\renewcommand{\arraystretch}{1.5}
\begin{tabular}[h]{|c|c|c|}
\vdots&\vdots&\vdots\\\hline

0&0&0\\\hline

$\ZZ/2\{\alpha_1^{n+2}\}\otimes \pi_{-2\alpha}\MZ_2$&
$\ZZ/2\{\alpha_1^{n+3}\}\otimes \pi_{1-3\alpha}\MZ_2$& 0\\\hline

$\ZZ/2\{\alpha_1^{n+1}\}\otimes \pi_{-\alpha}\MZ_2$&
$\ZZ/2\{\alpha_1^{n+2}\}\otimes \pi_{1-2\alpha}\MZ_2$& 0\\\hline

& $\ZZ/2\{\alpha_1^{n+1}\}\otimes \pi_{1-\alpha}\MZ_2$&\\
$\ZZ/2\{\alpha_1^n\}\otimes \pi_0\MZ_2$& $\oplus$& 0\\
& $\Tor(\ZZ/2\{\alpha_1^{n+2}\},\pi_{-2\alpha}\MZ_2)$&\\\hline

0& $\Tor(\ZZ/2\{\alpha_1^{n+1}\},\pi_{-\alpha}\MZ_2)$&
$\ZZ/2\{\alpha_3\alpha_1^{n-1}\}\otimes \pi_{-2\alpha}\MZ_2$\\\hline

0& 0& $\ZZ/2\{\alpha_3\alpha_1^{n-2}\}\otimes
\pi_{-\alpha}\MZ_2$\\\hline

0& 0& $\ZZ/2\{\alpha_3\alpha_1^{n-3}\}\otimes \pi_0\MZ_2$\\\hline
0&0&0\\\hline
$n\alpha$&$1+n\alpha$&$2+n\alpha$
\end{tabular}
\renewcommand{\arraystretch}{1.0}
\caption{The first three $*+n\alpha$-columns of the MANSS over a
  low-dimensional field for $n\ge 5$}\label{table:wt5MANSS}
\end{table}

For $n=3,4$ there are additional terms arising from $\alpha_{2/2}^2 =
\beta_{2/2}$ in the $2+n\alpha$-column, but they aren't important for
our argument.

A pattern of $d_2$- and $d_3$-differentials on the terms in the
$2+n\alpha$-column kill off all of the $1+n\alpha$-column.  The point
is that the $n\alpha$-column is permanent, being the associated graded
of $K^{MW}_{-n}(k)\comp{2} \cong W(k)\comp{2}$.  Meanwhile, every
element in the $1+n\alpha$-column represents a multiple
of $\eta^3\eta_s = 4\eta\nu = 0$ and hence is hit by a differential.
(This is of course all very plausible as $K^M_{r-1}(k)/2 = \ZZ/2\otimes
\pi_{-(r-1)\alpha}\MZ_2$ is an extension of
$\Tor(\ZZ/2,\pi_{-r\alpha}\MZ_2)$ and $\ZZ/2\otimes
\pi_{1-r\alpha}\MZ_2$.)  We
conclude that $\pi_{1+n\alpha}\one\comp{2} = 0$ for $n\ge 3$.

In fact,  
$\pi_{1+n\alpha}\one[1/p] = 0$ for $n\ge 3$.  By the
arithmetic fracture square and the fact that the $2+n\alpha$-column is
torsion in every $\ell$-MANSS, it suffices to check that
$\pi_{1+n\alpha}\one_\QQ = 0$, which is a special case of Proposition
\ref{prop:pi1SQ}.

\begin{rmk}
It is actually the case that the $d_2$- and $d_3$-differentials responsible
for the vanishing of $\pi_{1+n\alpha}\one\comp{2}$ for $n\ge 3$
eliminate all the classes in the $2+n\alpha$-column of the 2-MANSS for
$n\ge 5$, implying that $\pi_{2+n\alpha}\one\comp{2} = 0$ for $n\ge
5$.  There are no contributions at other primes, so get the intriguing vanishing
result
\begin{equation}\label{eqn:pi2vanish}
  \pi_{2+n\alpha}\one[1/p] = 0\text{ for }n\ge 5
\end{equation}
over a low-dimensional field.  It would be interesting to pursue other
vanishing results in the $m+*\alpha$-lines via our methods.

With a great deal of perseverance one can probably use
our methods to determine the
$2+*\alpha$-line of coefficients for $\one[1/p]$.  We have gone so far
as to determine the permanent cycles in these columns of the
$\ell$-MANSS, but have thus far not been able to determine all the
differentials entering from the $3+*\alpha$-columns.
\end{rmk}


\subsection{Weight $2$} \label{subsec:wt2}
This weight has no $\ell$-adic contributions for $\ell\ne 2,3$.  The
$1+2\alpha$-column of the 3-MANSS consists of $\ZZ/3\{\alpha_1\}$ in
homological degree $1$, so $\pi_{1+2\alpha}\one\comp{3} = \ZZ/3$.

The $1+2\alpha$-column of the $E_2$-page of the 2-MANSS in weight $2$ is similar to the weight 3
picture above:  one simply decreases all the $\alpha_1$ exponents by
$1$ and adds in one new group, $\ZZ/4\{\alpha_{2/2}\}\otimes \pi_0\MZ_2$.
These terms combined with 
\[
  \ZZ/2\{\alpha_1^3\}\otimes \pi_{1-\alpha}\MZ_2\text{ and }\Tor(\ZZ/2\{\alpha_1^3\},\pi_{-\alpha}\MZ_2)
\]
assemble into
$\pi_{1-\alpha}\M\FF_2$ and there is an
extension so that $4\alpha_{2/2}$ equals the generator of this group, thus
producing $\ZZ/8\{\nu\}$ in $\pi_{1+2\alpha}\one\comp{2}$.  Everything
else dies off via a pattern on $d_2$ and $d_3$ differentials because
$\eta^3\eta_s = 0$, and we
get $\pi_{1+2\alpha}\one\comp{2}\cong \ZZ/8\{\nu\}$.

Again by Proposition \ref{prop:pi1SQ} and arithmetic fracture we can
conclude that
\[
  \pi_{1+2\alpha}\one[1/p] \cong \ZZ/24\{\nu\}.
\]

\subsection{Weight $1$}\label{subsec:wt1}

By very similar arguments, we have
\[
  \pi_{1+\alpha}\one[1/p] \cong K^M_1(k)/24\{\nu\}\oplus \ZZ/2\{\eta\eta_s\}.
\]

\subsection{Weight $-1$}

This weight has no $\ell$-adic contributions for $\ell\ne 2$.  The
$1-\alpha$-column in the $2$-MANSS has entries
\[
  \pi_{1-\alpha}\MZ_2\{1\},~\Tor(\ZZ/2\{\alpha_1\},\pi_{-2\alpha}\MZ_2),
\]
\[
  \ZZ/2\{\alpha_1\}\otimes \pi_{1-2\alpha}\MZ_2,\text{ and }\ZZ/2\{\alpha_1^2\}\otimes\pi_{1-3\alpha}\MZ_2
\]
in homological degrees $0$, $0$, $1$, and $2$, respectively.  We
deduce that $\pi_{1-\alpha}\one\comp{2}$ has an associated graded with
pieces $\pi_{1-\alpha}\MZ_2\{1\}$, $K^M_1(k)/2\{\eta_s\}$,
$K^M_2(k)/2\{\eta\eta_s\}$.
Again by Proposition \ref{prop:pi1SQ} and arithmetic fracture we have
\[
  \pi_{1-\alpha}\one[1/p]\cong K^M_2(k)/2\oplus K^M_1(k)/2.
\]

\subsection{Contributing terms}
\label{subsection:contributingterms}
In weights $n=-2,-3,-4$ our methods no longer offer control over integral computations of $\pi_{1+n\alpha}\one[1/p]$.
Indeed, 
the terms 
\[
\pi_{2+n\alpha}\left(\prod_{\ell\ne p}\one\comp{\ell}\right)_\QQ,  
\pi_{1+n\alpha}\left(\prod_{\ell\ne p}\one\comp{\ell}\right)_\QQ 
\text{ and }
\pi_{1+n\alpha} \one_\QQ
\]
from (\ref{homotopyles}) can be nontrivial in general.
We analyze the rational stable motivic homotopy group in some detail.
Theorem \ref{thm:SQ} implies 
\[
\pi_{1+n\alpha} \one_\QQ
\cong
H^{-1-n}(k;\QQ(-n)).
\]
For $n=-2$, 
$H^{1}(k;\QQ(2)) \cong K_{3}^{ind}(k)_{\QQ}$, 
the rationalized indecomposable $K_{3}$-group of $k$.
If $k$ is a finite field or a global field of positive characteristic this group is trivial.  
If $k$ is a nonreal number field,
then $H^{1}(k;\QQ(2)) = \QQ^{r_2}$, 
where $r_2$ is the number of pairs of complex valuations.
The papers \cite{LevineK3}, \cite{Merkurjev-Suslin} give elegant proofs of these results.
For $n=-3$,
the rational motivic cohomology group $H^{2}(k;\QQ(3))$ is a direct summand of $K_{4}(k)_{\QQ}$,
which is trivial for finite and global fields.
For $n=-4$, 
the group $H^{3}(k;\QQ(4))$ is trivial for every low-dimensional field
$k$ since $\QQ$ is divisible \cite[I \S3.1 Corollary]{Serre}.

The terms $\pi_{i+n\alpha}\left(\prod_{\ell\ne p}
  \one\comp{\ell}\right)_\QQ$ for $i=1,2$ and $n=-2,-3,-4$ are of the
form
\[
  \pi_{i+n\alpha}\left(\prod_{\ell\ne p}
  \one\comp{\ell}\right)_\QQ = \prod_{\ell\ne
  p}H^{-i-n}(k;\ZZ_\ell(-n))\otimes \QQ.
\]
If $i=1$ and $n=-4$, then this group vanishes, but in general and it
can be nontrivial for low-dimensional fields.

\subsection{Weight $-2$}
This weight has no $\ell$-adic contributions for $\ell\ne 2$.  
The $1-2\alpha$-column in the $2$-MANSS has entries $\pi_{1-2\alpha}\MZ_2\{1\}$ and 
$\ZZ/2\{\alpha_1\}\otimes\pi_{1-3\alpha}\MZ_2$ in homological degrees $0$ and $1$,
respectively.  
We deduce that $\pi_{1-2\alpha}\one\comp{2}$ has an associated graded with pieces $\pi_{1-2\alpha}\MZ_2\{1\}$ and $K^M_2(k)/2$.
Hence there is short exact sequence
\[
  0\to K^M_2(k)/2\to \pi_{1-2\alpha}\prod_{\ell\ne p}\one\comp{\ell} \to
  \pi_{1-2\alpha}\MZ_2\to 0.
\]
From (\ref{homotopyles}) we obtain the exact sequence
\begin{equation*}
\pi_{2-2\alpha} \left(\prod_{\ell\ne p} \one\comp{\ell}\right)_\QQ
\to 
\pi_{1-2\alpha}  \one[1/p] 
\to 
K_{3}^{ind}(k)_{\QQ}\oplus \pi_{1-2\alpha} \prod_{\ell\ne p}\one\comp{\ell} 
\to 
\pi_{1-2\alpha} 
\left(\prod_{\ell\ne p}
\one\comp{\ell}\right)_\QQ.
\end{equation*}

\subsection{Weight $-3$}
Since there is only one $2$-adic contribution of homological degree $0$,  
we find that
\[
  \pi_{1-3\alpha}\prod_{\ell\ne p}\one\comp{\ell}\cong \pi_{1-3\alpha}\MZ_2\{1\}.
\]
From (\ref{homotopyles}) we obtain the exact sequence
\begin{equation*}
\pi_{2-3\alpha} \left(\prod_{\ell\ne p} \one\comp{\ell}\right)_\QQ
\to 
\pi_{1-3\alpha}  \one[1/p] 
\to 
H^{2}(k;\QQ(3))\oplus H^{2}(k;\ZZ_{2}(3))
\to 
H^{2}(k;\QQ_{2}(3)).
\end{equation*}
This sequence simplifies considerably when $k$ is a finite or global field, 
cf.~\S\ref{subsection:contributingterms}.

\subsection{Weight $-4$}

In this weight there are no $\ell$-adic contributions, 
i.e.,
\[
\pi_{1-4\alpha}\prod_{\ell\ne p}\one\comp{\ell} = 0.
\]
Thus, 
from (\ref{homotopyles}) and \S\ref{subsection:contributingterms}, 
$\pi_{2-4\alpha}\left(\prod_{\ell\ne p}\one\comp{\ell}\right)_\QQ$ surjects onto $\pi_{1-4\alpha}\one[1/p]$.

\subsection{Weights less than $-4$}

By the low-dimensionality assumption, we see that the
$1+n\alpha$-column of the $\ell$-MANSS vanishes whenever $\ell\ne p$
and $n<-4$.  Moreover, in this range $\pi_{i+n\alpha}\left(\prod_{\ell\ne p}\one\comp{\ell}\right)_\QQ$ vanishes for $i=1,2$.  
It follows that for $n<-4$,
$\pi_{1+n\alpha}\one[1/p]\cong \pi_{1+n\alpha}\one_\QQ\cong H^{-1-n}(k;\QQ(-n))=0$, 
cf.~\S\ref{subsection:contributingterms}.



\subsection{Finite fields}
When $k$ is a finite field we obtain a complete computation of the $1$-line $\pi_{1+n\alpha}\one[1/p]$ from the above.
For example, assuming $p\neq 2$, we obtain
\begin{equation*}
\pi_{1+n\alpha}\one[1/p]
=
\begin{cases}
0 & n\ge 3 \\
\ZZ/24 & n=2 \\
k^{\times}/24\oplus\ZZ/2 & n=1 \\
\ZZ/2\oplus\ZZ/2 & n=0 \\
k^{\times}/2 & n=-1 \\
0 & n\le -2.
\end{cases}
\end{equation*}



\appendix
\section{Arithmetic fracture squares in the stable motivic homotopy category}\label{appendix}

To a motivic spectrum $\EEE$ we can associate its rationalization $\EEE_{\QQ}$ and $\ell$-adic completion $\EEE\comp{\ell}$ 
for each rational prime number $\ell$.
We claim that there is a homotopy cartesian square: 
\begin{equation}
\label{equation:arithmeticsquare1}
\xymatrix{ 
\EEE\ar[r] \ar[r] \ar[d] & \prod_{\ell}\EEE\comp{\ell} \ar[d]  \\
\EEE_{\QQ} \ar[r] & (\prod_{\ell}\EEE\comp{\ell})_{\QQ}   }
\end{equation}

A theory of classical Bousfield localization for motivic spectra is worked out in \cite{MR2399164}.
When localizing at a motivic spectrum $\EEE$ the idea is to associate to any motivic spectrum $\FFF$ the part of $\FFF$
that can be seen through the eyes of $\EEE$.
More precisely,
there exists an $\EEE$-equivalence $\eta_{\EEE}\colon\FFF\rightarrow L_{\EEE}\FFF$ where $L_{\EEE}\FFF$ is $\EEE$-local.
Recall that a map $\FFF\rightarrow\GGG$ is called an $\EEE$-equivalence if smashing with $\EEE$ yields a weak equivalence.
Moreover, 
$\GGG$ is $\EEE$-local if the group of stable homotopy classes of maps $[\FFF,\GGG]$ is trivial
for every $\FFF$ with the property that $\EEE\wedge\FFF\simeq\ast$,
i.e., $\FFF$ is $\EEE$-acyclic.
Let $\one/\ell$ denote the mod-$\ell$ Moore spectrum.
Then $L_{\one/\ell}\FFF$ is the $\ell$-adic completion $\FFF\comp{\ell}$ of $\FFF$ \cite[\S 3]{MR2399164}.
And the rationalization $\FFF_{\QQ}$ of $\FFF$ is the localization $L_{\one_{\QQ}}\FFF$ with respect to the rationalized 
motivic sphere spectrum.

\begin{thm}
\label{thm:square}
Suppose $\EEE$, $\FFF$ and $\GGG$ are motivic spectra and
$\EEE \smsh L_{\FFF}\GGG\simeq\EEE \smsh L_{\FFF}L_{\EEE}\GGG\simeq\ast$.  
Then there is a homotopy cartesian square:
\begin{equation}
\label{equation:arithmeticsquare2}
\xymatrix{ 
L_{\EEE\vee\FFF}\GGG\ar[r]^-{\eta_{\EEE}} \ar[d]_-{\eta_{\FFF}} & L_{\EEE}\GGG \ar[d]^-{\eta_{\FFF}}  \\
L_{\FFF}\GGG \ar[r]^-{L_{\FFF}\eta_{\EEE}} & L_{\FFF}L_{\EEE}\GGG   }
\end{equation}
\end{thm}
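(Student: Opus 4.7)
The plan is to identify $L_{\EEE\vee\FFF}\GGG$ with the homotopy pullback
\[
  P \defined L_\EEE\GGG \times_{L_\FFF L_\EEE\GGG} L_\FFF\GGG
\]
via the universal property of Bousfield localization. By the functoriality of $L_\EEE$ and $L_\FFF$ together with the compatibility of their unit maps, there is a canonical map $\GGG \to P$ refining the square of the theorem; equivalently, this map factors through $L_{\EEE\vee\FFF}\GGG$, inducing a comparison $L_{\EEE\vee\FFF}\GGG \to P$. It therefore suffices to prove that (i) $P$ is $(\EEE\vee\FFF)$-local, and (ii) the canonical map $\GGG \to P$ is an $(\EEE\vee\FFF)$-equivalence, for then uniqueness of localization forces $L_{\EEE\vee\FFF}\GGG \xrightarrow{\sim} P$ and the theorem follows.

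For (i), any $(\EEE\vee\FFF)$-acyclic spectrum $Y$ satisfies both $\EEE \smsh Y \simeq \ast$ and $\FFF \smsh Y \simeq \ast$, since $(\EEE \vee \FFF) \smsh Y \simeq (\EEE \smsh Y) \vee (\FFF \smsh Y)$ and a wedge of motivic spectra is contractible only when each summand is. Hence every $\EEE$-local or $\FFF$-local spectrum is also $(\EEE\vee\FFF)$-local, and because the $(\EEE\vee\FFF)$-local subcategory is closed under homotopy limits, the pullback $P$ is $(\EEE\vee\FFF)$-local.

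For (ii) I will verify that $\EEE\smsh(\GGG \to P)$ and $\FFF\smsh(\GGG \to P)$ are both equivalences. Let $C$ denote the fiber of $\eta_\FFF \colon L_\EEE\GGG \to L_\FFF L_\EEE\GGG$. Because $\eta_\FFF$ is an $\FFF$-equivalence, $\FFF \smsh C \simeq \ast$; and the first hypothesis $\EEE \smsh L_\FFF L_\EEE\GGG \simeq \ast$, combined with this fiber sequence, yields $\EEE \smsh C \xrightarrow{\sim} \EEE \smsh L_\EEE\GGG$. The pullback also produces a fiber sequence $C \to P \to L_\FFF\GGG$; smashing it with $\EEE$ and invoking the second hypothesis $\EEE \smsh L_\FFF\GGG \simeq \ast$ gives $\EEE \smsh C \xrightarrow{\sim} \EEE \smsh P$, so that $\EEE \smsh P \simeq \EEE \smsh L_\EEE\GGG$. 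Since the composite $\GGG \to P \to L_\EEE\GGG$ is $\eta_\EEE$ and $\EEE \smsh \eta_\EEE$ is an equivalence, two-out-of-three implies that $\EEE \smsh (\GGG \to P)$ is an equivalence. Symmetrically, smashing $C \to P \to L_\FFF\GGG$ with $\FFF$ annihilates $C$ and yields $\FFF \smsh P \simeq \FFF \smsh L_\FFF\GGG$; the composite $\GGG \to P \to L_\FFF\GGG$ agrees with $\eta_\FFF$, which finishes the argument.

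The two vanishing hypotheses carry the heart of the argument: without them, $\EEE$-information surviving in the $\FFF$-localized lower row of the square would obstruct the pullback identification. In the intended application $\EEE = \bigvee_\ell \one/\ell$ and $\FFF = \one_\QQ$, rationalization annihilates $\ell$-torsion, so both $\EEE \smsh L_\FFF\GGG \simeq \ast$ and $\EEE \smsh L_\FFF L_\EEE\GGG \simeq \ast$ hold automatically, recovering the square (\ref{equation:arithmeticsquare1}). I do not anticipate a substantive obstacle; the argument is a formal exercise in Bousfield localization once the motivic machinery of \cite{MR2399164} is in place, and the main care required is the bookkeeping of which fiber sequence emanating from $C$ gets paired with which of the two vanishing hypotheses.
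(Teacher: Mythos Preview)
Your proof is correct and follows essentially the same strategy as the paper: form the homotopy pullback $P$, verify it is $(\EEE\vee\FFF)$-local, and then check that the canonical map $\GGG\to P$ is both an $\EEE$- and an $\FFF$-equivalence. The only cosmetic difference is in step (ii): the paper observes directly that $P\to L_\EEE\GGG$ is the pullback of $L_\FFF\eta_\EEE$, which is an $\EEE$-equivalence since both its source and target are $\EEE$-acyclic by hypothesis, whereas you unpack the same computation through the common fiber $C$ of $\eta_\FFF$ and the induced fiber sequence $C\to P\to L_\FFF\GGG$; these are two phrasings of the same fiber-sequence manipulation.
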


Before starting the proof of Theorem \ref{thm:square} proper we deduce our main example (\ref{equation:arithmeticsquare1}) 
of an arithmetic square.
When $\EEE$ is the sum $\bigvee_{\ell}\one/\ell$ indexed over all rational prime numbers, 
there are no nontrivial homotopy classes of maps from an $\EEE$-acyclic space to a product of spectra of the form 
$\prod_{\ell}\GGG\comp{\ell}$.
Moreover,
since smashing with $\one/\ell$ commutes with products,
there is a natural isomorphism
\begin{equation*}
\one/\ell_{\star}(\GGG)
\rightarrow
\one/\ell_{\star}(\prod_{\ell}\GGG\comp{\ell}).
\end{equation*}
It follows that $L_{\EEE}\GGG=\prod_{\ell}\GGG\comp{\ell}$.
In addition, 
when $\FFF=\one_{\QQ}$ then $\EEE \smsh L_{\FFF}\GGG \simeq \EEE \smsh
\left(\prod_\ell \GGG\comp{\ell}\right)_\QQ \simeq *$, 
since $\one_{\QQ}\wedge\one/\ell\simeq\ast$ for every $\ell$.
This shows the arithmetic square (\ref{equation:arithmeticsquare1}) for $\EEE$,
and hence for $\one[1/p]$ as in Section \ref{sec:pi1}, 
is a special case of (\ref{equation:arithmeticsquare2}).

\begin{proof}[Proof of Theorem \ref{thm:square}]
In the diagram (\ref{equation:arithmeticsquare2}), 
the $\EEE$-equivalence $\eta_{\EEE}$ is the unique factorization of $\eta_{\EEE}\colon\GGG\rightarrow L_{\EEE}\GGG$ through 
$L_{\EEE\vee\FFF}\GGG$.
It exists because $\GGG\rightarrow L_{\EEE\vee\FFF}\GGG$ is an $\EEE$-equivalence.
The same remarks apply to the map $\eta_{\FFF}$.
Let $\hp$ denote the homotopy pullback of 
\begin{equation}
\label{equation:homotopypullback}
\xymatrix{ 
L_{\FFF}\GGG \ar[r]^-{L_{\FFF}\eta_{\EEE}} & L_{\FFF}L_{\EEE}\GGG & L_{\EEE}\GGG \ar[l]_-{\eta_{\FFF}}.  }
\end{equation} 
Suppose $\HHH$ is both $\EEE$- and $\FFF$-acyclic.
Then the long exact sequence
\begin{equation*}
\cdots
\rightarrow
[\HHH,\hp]
\rightarrow
[\HHH,L_{\EEE}\GGG]
\oplus
[\HHH,L_{\FFF}\GGG]
\rightarrow
[\HHH,L_{\FFF}L_{\EEE}\GGG]
\rightarrow
\cdots
\end{equation*}
obtained from (\ref{equation:homotopypullback}) implies $[\HHH,\hp]=0$.
Thus $\hp$ is $(\EEE\vee\FFF)$-local.
\vspace{0.1in}

It remains to show the induced map $\GGG\rightarrow\hp$ is both an $\EEE$- and $\FFF$-equivalence.
To begin, 
note that $\hp\rightarrow L_{\EEE}\GGG$ is an $\EEE$-equivalence, 
being the pullback of $L_{\FFF}\eta_{\EEE}$.
Thus since $\GGG\rightarrow L_{\EEE}\GGG$ is an $\EEE$-equivalence, 
so is $\GGG\rightarrow\hp$.
By pulling back $\eta_{\FFF}$ a verbatim argument shows that $\GGG\rightarrow\hp$ is an $\FFF$-equivalence. 
\end{proof}

\begin{cor}
A map between motivic spectra $\EEE\rightarrow\FFF$ is a weak equivalence if and only if the naturally induced 
maps $L_{\QQ}\EEE\rightarrow L_{\QQ}\FFF$ and $\EEE\wedge\one/\ell\rightarrow\FFF\wedge\one/\ell$ are weak equivalences 
for every rational prime number $\ell$.
\end{cor}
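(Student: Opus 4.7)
The plan is to apply the arithmetic fracture square (\ref{equation:arithmeticsquare1}) to both $\EEE$ and $\FFF$ and compare the resulting homotopy cartesian diagrams. The only-if direction is immediate, since a weak equivalence $\EEE \to \FFF$ remains a weak equivalence after smashing with any fixed motivic spectrum (in particular $\one/\ell$) and after any Bousfield localization (in particular $L_{\one_\QQ}$).

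For the if direction, suppose that $L_\QQ \EEE \to L_\QQ \FFF$ is a weak equivalence and that $\EEE \wedge \one/\ell \to \FFF \wedge \one/\ell$ is a weak equivalence for every rational prime $\ell$. The second hypothesis says precisely that $\EEE \to \FFF$ is an $\one/\ell$-equivalence for every $\ell$, and hence induces a weak equivalence on $\ell$-completions $\EEE\comp{\ell} = L_{\one/\ell}\EEE \to L_{\one/\ell}\FFF = \FFF\comp{\ell}$ by the universal property of Bousfield localization. Since weak equivalences are preserved under products of motivic spectra, the induced map $\prod_{\ell} \EEE\comp{\ell} \to \prod_{\ell} \FFF\comp{\ell}$ is a weak equivalence; applying the rationalization functor then yields a weak equivalence on the bottom-right corner, $\bigl(\prod_{\ell} \EEE\comp{\ell}\bigr)_\QQ \to \bigl(\prod_{\ell} \FFF\comp{\ell}\bigr)_\QQ$.

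These observations show that in the natural map of arithmetic fracture squares (\ref{equation:arithmeticsquare1}) induced by $\EEE \to \FFF$, the upper-right, lower-left, and lower-right corner maps are all weak equivalences. Since both fracture squares are homotopy cartesian (as already deduced from Theorem \ref{thm:square} in the paragraph following its statement), the induced map on the upper-left corner, which is $\EEE \to \FFF$ itself, must also be a weak equivalence. This is the standard fact that if three of the four corner maps in a morphism of homotopy cartesian squares are weak equivalences, then so is the fourth; one sees this by comparing the homotopy fibers of the two right vertical arrows, which are weakly equivalent because their bases and total spaces agree up to weak equivalence.

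There is no serious obstacle here: the nontrivial content has already been packaged into Theorem \ref{thm:square} and its corollary (\ref{equation:arithmeticsquare1}). The only point requiring a moment's care is verifying that $\ell$-completion commutes with the relevant products in the sense needed above, but this is built into the identification $L_{\bigvee_\ell \one/\ell}\GGG \simeq \prod_\ell \GGG\comp{\ell}$ already recorded in the discussion preceding the proof of Theorem \ref{thm:square}.
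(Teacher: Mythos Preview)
Your proof is correct and follows essentially the same approach as the paper. The paper's proof consists of the single sentence ``Apply Theorem \ref{thm:square} in the case when $\EEE=\bigvee_{\ell}\one/\ell$ and $\FFF=\one_{\QQ}$,'' which amounts to invoking the arithmetic fracture square (\ref{equation:arithmeticsquare1}) and then carrying out precisely the comparison of homotopy cartesian squares that you have written out in detail.
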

\begin{proof}
Apply Theorem \ref{thm:square} in the case when $\EEE=\bigvee_{\ell}\one/\ell$ and $\FFF=\one_{\QQ}$.
\end{proof}

It is curious to note that one can also produce a direct proof that
(\ref{equation:arithmeticsquare1}) is homotopy cartesian by employing generators of 
the motivic stable homotopy category $\SH$ and comparing with the
classical stable homotopy category $\SH^{\TOP}$.  In fact, the
following proof works in any model category with a set of compact
generators that is also enriched over topological spectra.

\begin{proof}[Alternate verification that
  (\ref{equation:arithmeticsquare1}) is homotopy cartesian.]
Suppose $X$ is a smooth scheme of finite type over the base scheme in $\SH$.
For integers $m$ and $n$ we form the smash product $X^{m+n\alpha}\equiv S^{m+n\alpha}\wedge X_{+}$ in $\SH$.
Then $\{X^{m+n\alpha}\}_{m,n\in\ZZ}$ is a set of compact generators of $\SH$.
\vspace{0.1in}

Denote by 
\begin{equation*}
\uhom
\colon
\SH^{\op}\times\SH
\rightarrow
\SH^{\TOP}
\end{equation*}
the composite of the internal hom functor in $\SH$ with the right adjoint of the canonical functor
\begin{equation*}
\SH^{\TOP}
\rightarrow
\SH^{S^{1}}
\rightarrow
\SH.
\end{equation*}
It is obtained by first viewing ordinary spectra as constant presheaves of $S^{1}$-spectra for the simplicial 
circle and second as motivic spectra.
We claim that applying $\uhom(X^{m+n\alpha},-)$ to (\ref{equation:arithmeticsquare1}) yields the classical arithmetic 
square of $\uhom(X^{m+n\alpha},\EEE)$ shown in \cite[\S 2]{MR551009}. 
This finishes the proof.
In effect, 
$\uhom(X^{m+n\alpha},-)$ commutes with products and with $\ell$-completions because the latter is an internal 
hom \cite[\S 3]{MR2399164}.
Moreover, 
since the generator $X^{m+n\alpha}$ is compact,
$\uhom(X^{m+n\alpha},-)$ is seen to commute with rationalization by viewing the latter as a filtered homotopy colimit. 
\end{proof}

\bibliographystyle{plain} 
\bibliography{OO}
\vspace{0.1in}

\end{document}